\documentclass[]{interact}

\usepackage{epstopdf}
\usepackage[caption=false]{subfig}

\usepackage[numbers,sort&compress]{natbib}
\bibpunct[, ]{[}{]}{,}{n}{,}{,}
\makeatletter
\def\NAT@def@citea{\def\@citea{\NAT@separator}}
\makeatother
\usepackage{anyfontsize}
\theoremstyle{plain}
\newtheorem{theorem}{Theorem}[section]
\newtheorem{lemma}[theorem]{Lemma}
\newtheorem{corollary}[theorem]{Corollary}

\theoremstyle{definition}
\newtheorem{definition}[theorem]{Definition}
\newtheorem{remark}[theorem]{Remark}
\newtheorem{example}[theorem]{Example}

\usepackage{xcolor}

\theoremstyle{remark}

\begin{document}
	

	\title{Unbounded Antilinear Operators on Hilbert Spaces}
  \author{\name {Arup Majumdar \thanks{Arup Majumdar (corresponding author). Email address: arupmajumdar93@gmail.com}} \affil{Institute of Mathematics, Czech Academy of Sciences,
			\v{Z}itn\'a 25, Prague, Czech Republic.}}
  
	\maketitle

	\begin{abstract}
    The paper introduces unbounded antilinear operators on Hilbert spaces and develops their fundamental theory. In particular, we establish a closed range theorem, a polar decomposition theorem, and the convexity of the numerical range for antilinear operators. Furthermore, we present several new results on antilinear normal operators and provide necessary and sufficient conditions for the existence of a minimal antilinear normal extension of an antilinear subnormal operator. We further develop a comprehensive characterization of antilinear block operator matrices with purely antilinear entries, establishing necessary and sufficient criteria for their closability through the framework of Schur and quadratic complements.
\end{abstract}

	\begin{keywords}
		conjugation, antilinear normal operators, antilinear block operator matrices.
	\end{keywords}
      \begin{amscode} Primary 47A12, 47B20; Secondary 47A05.\end{amscode}
    \section{Introduction}
With the rapid development of quantum information and quantum computation theory, the study of the preparation, transformation, and measurement of quantum states has become increasingly significant. Antilinearity plays a fundamental role in physics, originating from the seminal observation of Wigner that time-reversal symmetry in quantum mechanics is implemented by antiunitary operators \cite{bargmann}. Consequently, antilinear operators are indispensable in the analysis of symmetries and transformations of quantum systems.

In the framework of quantum field theory, important symmetries such as charge conjugation (C-symmetry), time-reversal symmetry (T-symmetry), and combined parity-time symmetry (PT-symmetry) are all described by antilinear operators \cite{IonG}. Despite their importance, a systematic development of both bounded and unbounded antilinear operators remains limited. Most existing works focus on special classes of bounded antilinear operators of the form $CT$, where $C$ is a conjugation and $T$ is a bounded linear operator on a Hilbert space. In this direction, several results have been obtained for $C$-normal, $C$-quasinormal, and $C$-symmetric operators \cite{ptak2019c, EKo}.

The study of unbounded $C$-symmetric and $C$-self-adjoint operators has earlier been carried out under the terminology of $J$-symmetric and $J$-self-adjoint operators \cite{IanK}. More recently, \cite{Arlinskii} presents a detailed investigation of unbounded $C$-symmetric and $C$-self-adjoint operators, including necessary and sufficient conditions for their existence via operator matrix representations with off-diagonal structures. We also refer to \cite{Fherbut} for the polar decomposition of antilinear normal operators on finite-dimensional Hilbert spaces and its applications to electrical network theory and two-particle wave operators.

This paper aims to contribute to the systematic development of unbounded antilinear operators on Hilbert spaces. Section 2 constitutes the core of the paper, where we establish several fundamental results, including the closed range theorem, the convexity of the numerical range, and various characterizations of unbounded antilinear normal operators. Furthermore, we investigate the existence of minimal antilinear normal extensions of antilinear subnormal operators, develop the polar decomposition for densely defined closed antilinear operators, and obtain commutativity characterizations associated with such decompositions. Moore--Penrose inverses of densely defined closed antilinear operators are also introduced in this section. Section~3 presents several characterizations of antilinear block operator matrices whose entries are all antilinear operators, together with necessary and sufficient conditions for these matrices to be closable in terms of their Schur and quadratic complements.\\

Throughout the paper, the symbols $H, K, K_{1}$ represent complex Hilbert spaces. Consequently, for an (linear and antilinear) operator $T$, it is necessary to specify the subspace on which $T$ is defined, called the domain of $T$ and denoted by $D(T)$. The null space and range space of $T$ are denoted by $N(T)$ and $R(T)$, respectively. For a subset $W_{1} \subseteq X$, the symbol $W_{1}^{\perp}$ denotes its orthogonal complement, while $W_{1} \oplus W_{2}$ denotes the orthogonal direct sum of the subspaces $W_{1}$ and $W_{2}$ of $X$. Furthermore, $T_{\vert{W}}$ denotes the restriction of $T$ to a subspace $W \subseteq D(T)$.
\begin{definition}\cite{majumdar2024hyers}
		Let $ T$ be an (linear or antilinear) operator from a Hilbert space $H $ with domain $D(T) $ to a Hilbert space $K $. If the graph of $T$ is defined by 
		$$ G(T)=\left\{(x,Tx): x\in D(T)\right\} $$ is closed in $H\times K $, then $T$ is called a closed (linear or antilinear) operator. Equivalently, $T $ is a closed operator if $ \{x_n \}$ in $D(T) $ such that $ x_n\rightarrow x $ and $ Tx_n\rightarrow y$ for some $ x\in  H,y\in   K $,  then $ x\in  D(T) $ and $ Tx=y $. That is, $G(T)$ is  a closed subset of $H\times K$ with respect to the graph norm $\|(x, y)\|_T=(\|x\|^2+\|y\|^2)^{1/2}$. It is easy to show that the graph norm  $\|(x, y)\|_T$ is equivalent to the norm $\|x\|+\|y\|$. 	 We note that,  for any densely defined closed (linear or antilinear) operator $T$,  the closure of $D(T) \cap N(T)^{\perp}$ is $ N(T)^\perp$. 
		We say that $S$ is an extension of $ T $ (denoted by $ T\subset S$) if $ D(T)\subset D(S) $ and $ Sx=Tx $ for all $ x\in D(T)$.
		
		An (linear or antilinear) operator $ T $ is said to be closable if $T$ has a closed extension. It follows that $T$ is closable if and only if the closure $\overline {G(T)}$ of $G(T)$ is a graph of an (linear or antilinear) operator. So, $T$ is closable if and only if $(0,y) \in \overline{G(T)}$ implies $y=0.$	It is also possible for a closable (linear or antilinear) operator to have many closed extensions.  Its minimal closed extension is denoted by $ \overline{T}$. That is, every closed extension of $ T $ is also an extension of $ \overline{T}$.
        \end{definition}
        \begin{definition}\cite{majumdar2024hyers}
		A linear subspace $D$ of $D(T)$ is called a core for $T$ if $D$ is dense in $(D(T), \|.\|_{T})$, that is, for each $x\in D(T)$, there exists a sequence $\{x_{n}\}$ in $D$ such that  $\{x_{n}\}$ and $\{ Tx_{n}\}$ converge to $x$ and $Tx$ respectively. Moreover, if $T$ is closed, a linear subspace $D$ of $D(T)$ is a core for $T$ if and only if $T=\overline{T_{\vert{D}}}$.
	\end{definition}

 \section{Characterization of the unbounded antilinear operators}
 From now on, $T$ is a unbounded antilinear between the specified complex Hilbert spaces. A conjugation on $H,$ denoted by $C,$ is an isometric antilinear involution on $K.$ By isometric antilinear involution, we mean that $C(\alpha x + y)= \overline{\alpha} C(x) + C(y)$ with $C^{2} = I,$ and $\langle Cx, Cy\rangle = \langle y, x \rangle.$ It is evident to see that any antilinear operator $T$ from $H$ to $K$ can be written as $T = CT_{1},$ where $T_{1}= CT$ is a linear operator with domain $D(T_{1}) = D(T).$\\
 Let \(T\) be an antilinear operator from \(H\) into \(K\) such that the domain \(D(T)\) is dense in \(H\). Set
\[
D(T^{\sharp}) = \left\{ y \in K : \text{there exists } u \in H \text{ such that } \overline{\langle Tx, y \rangle} = \langle x, u \rangle \text{ for all } x \in D(T) \right\}.
\]

By the Riesz representation theorem, a vector \(y \in K\) belongs to \(D(T^{\sharp})\) if and only if the map
\[
x \mapsto \overline{\langle Tx, y \rangle}
\]
is a bounded linear functional on \((D(T), \|\cdot\|)\), or equivalently, there exists a constant \(c_y > 0\) such that
\[
|\langle Tx, y \rangle| \leq c_y \|x\| \quad \text{for all } x \in D(T).
\]
Here, the adjoint of $T$ is denoted by $T^{\sharp}$ and $T^{\sharp}(y) = u.$ For the definition of the conjugation $C,$ it follows that $C^{\sharp} = C= C^{-1}.$
 \begin{lemma}\label{lemma 1}
 Let $S$ be a densely defined closed linear operator from $H$ to $K$ and $C$ be a conjugation on $K.$ Then $CS$ is closed and $(CS)^{\sharp} = S^{*}C.$
 \end{lemma}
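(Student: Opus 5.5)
The plan has two parts: closedness of $CS$, which follows from $C$ being an isometric homeomorphism of $K$, and the adjoint identity $(CS)^{\sharp} = S^{*}C$, which I will obtain by proving both inclusions directly from the definition of $D(T^{\sharp})$.

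\emph{Closedness.} Note $D(CS)=D(S)$, which is dense in $H$. Let $x_n\in D(S)$ with $x_n\to x$ and $CSx_n\to y$. Since $C$ is isometric, $\|Sx_n - Cy\| = \|C(Sx_n-Cy)\| = \|CSx_n - y\|\to 0$, using $C^2=I$ and additivity of $C$. Closedness of $S$ then gives $x\in D(S)$ and $Sx = Cy$, so $CSx = y$. Hence $CS$ is a closed antilinear operator, and $(CS)^{\sharp}$ is defined because $D(CS)$ is dense.

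\emph{The key identity.} For $x\in D(S)$ and $y\in K$, the property $\langle Cu, Cv\rangle = \langle v,u\rangle$ applied with $u=Sx$ and $v=Cy$ gives
\[
\langle CSx, y\rangle = \langle CSx, C(Cy)\rangle = \langle Cy, Sx\rangle,
\qquad\text{so}\qquad
\overline{\langle CSx, y\rangle} = \langle Sx, Cy\rangle .
\]

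\emph{Inclusion $(CS)^{\sharp}\subset S^{*}C$.} Let $y\in D((CS)^{\sharp})$ with $(CS)^{\sharp}y=u$. Then $\langle Sx, Cy\rangle = \langle x,u\rangle$ for all $x\in D(S)$. By the definition of the linear adjoint, $Cy\in D(S^{*})$ and $S^{*}Cy = u$.

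\emph{Inclusion $S^{*}C\subset (CS)^{\sharp}$.} Conversely, let $Cy\in D(S^{*})$. For all $x\in D(S)$,
\[
\overline{\langle CSx,y\rangle}=\langle Sx,Cy\rangle=\langle x,S^{*}Cy\rangle .
\]
So $y\in D((CS)^{\sharp})$ and $(CS)^{\sharp}y = S^{*}Cy$. Uniqueness of the representing vector $u$ comes from density of $D(S)$.

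The two inclusions give $D((CS)^{\sharp}) = \{y : Cy\in D(S^{*})\} = D(S^{*}C)$ and equality of the operators. The only delicate step is the conjugation identity above: one must apply $\langle Cu,Cv\rangle=\langle v,u\rangle$ with the right arguments to obtain the correct complex conjugate, since this is what makes the antilinear adjoint definition match the linear one. The rest is routine.
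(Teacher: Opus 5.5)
Your proof is correct and takes essentially the same approach as the paper. Closedness follows by transferring $CSx_n\to y$ to $Sx_n\to Cy$ and using that $S$ is closed. The identity $(CS)^{\sharp}=S^{*}C$ follows by proving both inclusions directly from the definition of $D((CS)^{\sharp})$, via $\overline{\langle CSx,y\rangle}=\langle Sx,Cy\rangle$.
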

 \begin{proof}
 Let $(x, y) \in \overline{G(CS)}.$ Then there exist $\{x_{n}\}$ in $D(S)$ such that $\{x_{n}\} \to x$ and $CSx_{n} \to y$ as $n \to \infty.$ So, $\{Sx_{n}\}$ is Cauchy and converges to  $C^{-1}y \in K.$ Since $S$ is closed, then $x \in D(CS)~ \text{and}~CSx= y.$ Thus, $CS$ is closed.\\
 The adjoint of $CS$ exists because the domain of $D(CS)= D(S)$ is dense. For all $h \in D(S)$ and a fixed $g \in D((CS)^{\sharp}),$ we get
 $$\langle (CS)^{\sharp}g, h\rangle = \langle CSh, g \rangle = \overline{\langle Sh, Cg\rangle}.$$ So, $\langle Sh, Cg\rangle$ is bounded for all $h \in D(S).$ It shows that $(CS)^{\sharp} \subset S^{*}C.$ Now we claim the reverse inclusion. Let $z \in D(S^{*}C).$ Then for all $u \in D(CS),$ we obtain $$\langle CSu, z \rangle = \overline{\langle Su, Cz \rangle} = \langle S^{*}Cz, u\rangle.$$ Hence, $z \in D((CS)^{\sharp})$ which implies that $(CS)^{\sharp} = S^{*}C.$ 
 \end{proof}
\begin{remark}
When $F$ is a densely defined (linear or antilinear) operator from $H$ to $K$ and $E$ is a bounded (linear or antilinear) operator from $K$ to $K_{1},$ then the adjoint of $EF$ is equal to the product of the adjoints in reverse order.
\end{remark}
 \begin{lemma}\label{lemma 2}
 Let $T$ be a densely defined anilinear operator from $H$ to $K$. Then $T^{\sharp}$ is also a closed antilinear operator. 
\end{lemma}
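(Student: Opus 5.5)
The plan is to reduce to the linear case by writing $T = C T_{1}$, as noted at the start of this section. Here $C$ is a conjugation on $K$ and $T_{1} = CT$ is linear with $D(T_{1}) = D(T)$ dense in $H$. The linear operator $T_{1}$ need not be closed, so Lemma \ref{lemma 1} does not apply verbatim. However, the adjoint computation in its proof never uses closedness of $S$. It only uses density of $D(S)$ and the identity $\langle CSh, g\rangle = \overline{\langle Sh, Cg\rangle}$.

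\textbf{Step 1: identify $T^{\sharp}$.} I will repeat that computation with $S = T_{1}$ to show $T^{\sharp} = T_{1}^{*}C$. For $y \in K$ and $x \in D(T)$ we have
$$\overline{\langle Tx, y\rangle} = \overline{\langle CT_{1}x, y\rangle} = \langle T_{1}x, Cy\rangle .$$
Hence $x \mapsto \overline{\langle Tx, y\rangle}$ is bounded if and only if $Cy \in D(T_{1}^{*})$. In that case $u = T_{1}^{*}Cy$ is the unique representing vector, unique because $D(T)$ is dense. So $D(T^{\sharp}) = C\,D(T_{1}^{*})$ and $T^{\sharp} = T_{1}^{*}C$.

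\textbf{Step 2: antilinearity and well-definedness.} Uniqueness of $u$ again uses density of $D(T)$. Since $C$ is antilinear and $T_{1}^{*}$ is linear, the composition $T_{1}^{*}C$ is antilinear. One can also check this directly from the definition: $\overline{\langle Tx, \alpha y_{1} + y_{2}\rangle} = \overline{\alpha}\,\overline{\langle Tx, y_{1}\rangle} + \overline{\langle Tx, y_{2}\rangle}$, so $T^{\sharp}(\alpha y_{1} + y_{2}) = \overline{\alpha}\,T^{\sharp}y_{1} + T^{\sharp}y_{2}$.

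\textbf{Step 3: closedness.} Take $y_{n} \in D(T^{\sharp})$ with $y_{n} \to y$ and $T^{\sharp}y_{n} \to u$. For each fixed $x \in D(T)$, pass to the limit in $\overline{\langle Tx, y_{n}\rangle} = \langle x, T^{\sharp}y_{n}\rangle$ using continuity of the inner product. This gives $\overline{\langle Tx, y\rangle} = \langle x, u\rangle$ for all $x \in D(T)$. Therefore $y \in D(T^{\sharp})$ and $T^{\sharp}y = u$, so the graph of $T^{\sharp}$ is closed.

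This direct argument makes the reduction optional. Alternatively, closedness follows from Step 1, since the linear adjoint $T_{1}^{*}$ is closed and $C$ is an isometric bijection, exactly as in the first half of Lemma \ref{lemma 1}.

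There is no serious obstacle here. The only delicate point is bookkeeping with the complex conjugates, so that the defining relation $\overline{\langle Tx, y\rangle} = \langle x, u\rangle$ is linear in the right variables for the limit argument. A second small point is noting that closedness of $T$ is not required.
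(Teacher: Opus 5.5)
Your Step 3 is the paper's proof: for each fixed $x\in D(T)$ you pass to the limit in $\overline{\langle Tx,y_n\rangle}=\langle x,T^{\sharp}y_n\rangle$, and the paper likewise reads antilinearity directly off the definition, so the reduction $T^{\sharp}=T_{1}^{*}C$ in Step 1 is correct but not needed.

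One small slip in Step 2. The paper's convention has the inner product linear in the first slot; that is what makes $x\mapsto\overline{\langle Tx,y\rangle}$ a linear functional. Under that convention the conjugated expansion reads $\alpha\,\overline{\langle Tx,y_{1}\rangle}+\overline{\langle Tx,y_{2}\rangle}=\langle x,\overline{\alpha}\,T^{\sharp}y_{1}+T^{\sharp}y_{2}\rangle$, with $\alpha$ rather than $\overline{\alpha}$ in front. Your conclusion $T^{\sharp}(\alpha y_{1}+y_{2})=\overline{\alpha}\,T^{\sharp}y_{1}+T^{\sharp}y_{2}$ is nonetheless right, and your composition argument via $T_{1}^{*}C$ already gives it.
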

\begin{proof}
Let $(y, x) \in \overline{G(T^{\sharp})}.$ Then there exists a sequence $y_{n}$ in $D(T^{\sharp})$ such that $y_{n} \to y$ and $T^{\sharp}y_{n} \to x$ as $n \to \infty.$ For all $z \in D(T),$ we get $$\overline{\langle Tz, y\rangle} = \lim_{n\to \infty} \overline{\langle Tz, y_{n} \rangle} = \lim_{n \to 
\infty}\langle z, T^{\sharp}y_{n}\rangle = \langle z, x\rangle.$$ Then $y \in D(T^{\sharp})$ and $T^{\sharp}y = x.$ Therefore, $T^{\sharp}$ is closed. The antilinear property of $T^{\sharp}$ follows from the definition of the adjoint of $T.$
\end{proof}
\begin{remark}\label{remark 3} When $T$ is a densely defined antilinear operator from $H$ to $K,$ then $N(T^{\sharp}) = R(T)^{\perp}.$ Because $z \in N(T^{\sharp})$ if and only if $\langle Tx, z \rangle = 0,~ \text{for all}~ x \in D(T)$ if and only if $z \in R(T)^{\perp}.$
\end{remark}
\begin{theorem}\label{thm 4}
Let $S$ be a densely defined closed linear operator from $H$ to $K$ and $C$ be a conjugation on $K.$ Then the following statements hold:
\begin{enumerate}
\item $((CS)^{\sharp})^{\sharp} = CS.$
\item $R((CS)^{\sharp})^{\perp} = N(CS).$
\end{enumerate}
\end{theorem}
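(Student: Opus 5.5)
The plan is to reduce both statements to the classical linear facts $S^{**}=S$ and $R(S^{*})^{\perp}=N(S)$ for a densely defined closed linear operator $S$, using Lemma~\ref{lemma 1} and the fact that $C$ is a bounded, isometric, bijective antilinear map.

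For (1), Lemma~\ref{lemma 1} gives $(CS)^{\sharp}=S^{*}C$. Since $S$ is closed and densely defined, $S^{*}$ is closed and densely defined. Hence $D(S^{*}C)=C\,D(S^{*})$ is dense in $K$, because $C$ is an isometric bijection of $K$, so $((CS)^{\sharp})^{\sharp}$ exists.

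To compute it, I would write $S^{*}C = C(CS^{*}C)$, where $CS^{*}C$ is a closed, densely defined linear operator on $K\to H$ if $C$ also denotes a conjugation on $H$. That would make Lemma~\ref{lemma 1} applicable again. But only a conjugation on $K$ is given, so I would instead verify the adjoint directly from the definition:
\[
\overline{\langle S^{*}Cy, x\rangle}=\langle x, S^{*}Cy\rangle .
\]
Then $x\in D((S^{*}C)^{\sharp})$ if and only if $y\mapsto \langle S^{*}Cy,x\rangle$ is bounded on $D(S^{*}C)$. Substituting $z=Cy$, which ranges over $D(S^{*})$, this holds if and only if $z\mapsto\langle S^{*}z,x\rangle$ is bounded, i.e.\ if and only if $x\in D(S^{**})=D(S)$.

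In that case I expect
\[
\langle S^{*}Cy,x\rangle=\langle Cy,Sx\rangle=\langle C S x, y\rangle ,
\]
using $\langle Ca,Cb\rangle=\langle b,a\rangle$ and $C^{2}=I$. This identifies $(S^{*}C)^{\sharp}x = CSx$, after checking that the conjugation conventions in the definition of $T^{\sharp}$ line up.

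This bookkeeping of complex conjugates is the step I expect to be the main obstacle. The definition of $T^{\sharp}$ pairs $\overline{\langle Tx,y\rangle}$ with $\langle x,u\rangle$, and I must make sure the roles of the two arguments are applied consistently when the operator being adjointed is $S^{*}C$, which maps $K\to H$. I would verify it by testing against $y\in D(S^{*}C)$ and using the density of $D(S^{*}C)$ for uniqueness of the representing vector.

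For (2), I would use Remark~\ref{remark 3} applied to the densely defined antilinear operator $(CS)^{\sharp}$, which gives $N(((CS)^{\sharp})^{\sharp})=R((CS)^{\sharp})^{\perp}$. By (1) the left side equals $N(CS)$. Alternatively, directly: $R((CS)^{\sharp})=R(S^{*}C)=R(S^{*})$ since $C$ is onto, so $R((CS)^{\sharp})^{\perp}=R(S^{*})^{\perp}=N(S)=N(CS)$, the last equality because $C$ is injective.
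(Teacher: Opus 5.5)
Your proposal is correct and follows essentially the same route as the paper: you reduce to $S^{*}C$ via Lemma~\ref{lemma 1}, show $D(S^{*}C)=C\,D(S^{*})$ is dense, identify the domain of the second adjoint with $D(S^{**})=D(S)$ through the substitution $z=Cy$ and read off $(S^{*}C)^{\sharp}x=CSx$, and then obtain (2) from Remark~\ref{remark 3} combined with (1). The conjugation bookkeeping you flagged does line up: for $x\in D(S)$ and $y\in D(S^{*}C)$ one has $\overline{\langle S^{*}Cy,x\rangle}=\langle Sx,Cy\rangle=\langle y,CSx\rangle$, which is exactly the defining identity for $(S^{*}C)^{\sharp}x=CSx$.
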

\begin{proof}
$\it(1)$ Lemma \ref{lemma 1} yields that $(CS)^{\sharp} = S^{*}C.$ Now we claim that $D(S^{*}C)$ is dense in order to show the existence of $((CS)^{\sharp})^{\sharp}.$\\
Let $x \in K.$ Then there exists $z \in K$ such that $Cz =x.$ Since the domain $D(S^{*})$ is dense in $K$ because $S$ is closed. So, there exists a sequence $\{x_{n}\}$ in $D(S^{*})$ such that $x_{n} \to z = C^{2}z.$ Again, we get a sequence $\{z_{n}\}$ in $K$ with $Cz_{n} = x_{n},$ for all $n \in \mathbb{N}.$ It follows that $z_{n} \to x,$ as $n \to \infty,$ where $z_{n} \in D(S^{*}C)$ for all $n \in \mathbb{N}.$ Thus, $D(S^{*}C)$ is densely defined.\\
Let $p \in D((CS)^{\sharp})^{\sharp}.$ For all $u \in D(CS)^{\sharp},$
$f(u)= \langle u, ((CS)^{\sharp})^{\sharp}p\rangle = \langle p, S^{*}Cu\rangle$ is bounded.\\
Again, consider a new functional $g(w) = \langle S^{*}w, p \rangle,$ for all $w \in D(S^{*}).$ Since $R(C) = H,$ we also have $w^{'} \in K$ such that $$|g(w)| = |\langle S^{*}Cw^{'}, p\rangle| =|f(w^{'})| \leq \|Cw^{'}\| \|((CS)^{\sharp})^{\sharp}p\| = \|w\| \|((CS)^{\sharp})^{\sharp}p\|.$$ It means that $p \in D(S).$ So, we obtain $f(u) = \langle u, ((CS)^{\sharp})^{\sharp}p\rangle = \langle Sp, Cu\rangle = \langle u, CSp\rangle.$ Hence, $((CS)^{\sharp})^{\sharp} \subset CS.$\\ Again, consider an element $y\in D(CS).$ Then, for all $k \in D(S^{*}C)$,  the functional $f_{1}(k) = \overline{\langle S^{*}Ck, y\rangle} = \langle k, CSy\rangle$ is bounded. Therefore, $y \in D((CS)^{\sharp})^{\sharp}$ which confirms the required equality $((CS)^{\sharp})^{\sharp} = CS.$\\
$\it(2)$ The relation $R((CS)^{\sharp})^{\perp} = N(CS)$ is verified from that Remark \ref{remark 3} and the identity  $((CS)^{\sharp})^{\sharp} = CS.$
\end{proof}
\begin{remark}\label{remark 5}
When T is a densely defined antilinear closed operator from $H$ to $K,$ then $D(T_{1})$ is also a densely defined closed linear operator from $H$ to $K,$ where $T_{1} =CT,$ $T =CT_{1},$ and $C$ is a conjugation on $K.$ Then the previous Theorem \ref{thm 4} and Remark \ref{remark 3} demonstrate that the relations (i) $(T^{\sharp})^{\sharp} = T,$ (ii) $R(T^{\sharp})^{\perp} = N(T),$ and (iii) $ N(T^{\sharp}) = R(T)^{\perp}.$
\end{remark}
\begin{lemma}\label{lemma 6}
Let $S$ be a densely defined closed linear operator from $H$ to $K$ and $C$ be a conjugation on $K.$ Then the following statements hold:
\begin{enumerate}
\item $(CS)^{\sharp}(CS)$ is a positive self-adjoint operator.
\item $(CS)(CS)^{\sharp}$ is a positive self-adjoint operator.
\end{enumerate}
\end{lemma}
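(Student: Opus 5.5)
The plan is to reduce both statements to von Neumann's theorem for densely defined closed linear operators: if $S$ is densely defined and closed, then $S^{*}S$ and $SS^{*}$ are positive self-adjoint. The reduction uses Lemma \ref{lemma 1}, which gives $(CS)^{\sharp} = S^{*}C$, together with $C^{2}=I$ and the isometry of $C$.

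For (1), I will compute the product and its domain directly. By Lemma \ref{lemma 1}, $(CS)^{\sharp}(CS) = S^{*}C\,CS$ on the domain
\[
\{x \in D(S) : CSx \in D(S^{*}C)\}.
\]
Since $CSx \in D(S^{*}C)$ exactly when $C(CSx) = Sx \in D(S^{*})$, this domain is $D(S^{*}S)$. On it, $S^{*}CCSx = S^{*}Sx$. Hence $(CS)^{\sharp}(CS) = S^{*}S$ as operators, including domains, and this is a linear operator. Von Neumann's theorem then gives that it is positive and self-adjoint. Positivity can also be checked directly from $\langle S^{*}Sx, x\rangle = \|Sx\|^{2} \ge 0$.

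For (2), I will identify $(CS)(CS)^{\sharp} = CS\,S^{*}C = C(SS^{*})C$, with domain $\{y : Cy \in D(SS^{*})\} = C\,D(SS^{*})$. This operator is linear, since the two antilinear factors $C$ cancel conjugations. Set $A = SS^{*}$, which is positive self-adjoint by von Neumann's theorem, and write $B = CAC$. It remains to show that $B$ is positive and self-adjoint.

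\begin{itemize}
\item \emph{Dense domain.} $D(B) = C\,D(A)$ is dense because $C$ is an isometric bijection. This is the same argument as in Theorem \ref{thm 4}(1).
\item \emph{Positivity.} For $y \in D(B)$,
\[
\langle CACy, y\rangle = \overline{\langle ACy, Cy\rangle} = \langle ACy, Cy\rangle \ge 0,
\]
using $\langle Cu, Cv\rangle = \langle v, u\rangle$ and $C^{2}=I$.
\item \emph{Self-adjointness.} I will show $B^{*} = CA^{*}C$ and then use $A^{*}=A$. Take $z \in D(B^{*})$. For all $y = Cw$ with $w \in D(A)$,
\[
\langle By, z\rangle = \langle CAw, z\rangle = \overline{\langle Aw, Cz\rangle}.
\]
So $z \in D(B^{*})$ if and only if $w \mapsto \langle Aw, Cz\rangle$ is bounded, that is, if and only if $Cz \in D(A^{*}) = D(A)$. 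In that case $\langle Bw', z\rangle = \langle w', CACz\rangle$ for every $w' \in D(B)$. Alternatively, this follows at once from the Remark on adjoints of products with bounded factors applied to $C\cdot(AC)$, together with the analogue of Lemma \ref{lemma 1} for $AC$.
\end{itemize}

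I expect the main obstacle to be this last step: keeping the complex conjugations straight so that the adjoint of the linear operator $CAC$ comes out as $CA^{*}C$, and not its conjugate. Apart from that, the proof is domain bookkeeping.
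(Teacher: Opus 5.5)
Your proposal is correct and takes essentially the same route as the paper. For (1), the paper also reduces via Lemma \ref{lemma 1} to $(CS)^{\sharp}(CS)=S^{*}S$. For (2), it also writes $(CS)(CS)^{\sharp}=CSS^{*}C$, gets density of the domain from $C$ being an isometric bijection, and proves self-adjointness by showing that $z\in D((CSS^{*}C)^{*})$ forces $Cz\in D((SS^{*})^{*})=D(SS^{*})$, then checking the reverse inclusion directly. You package this as the identity $(CAC)^{*}=CA^{*}C$ with more explicit domain bookkeeping, but the argument is the same.
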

\begin{proof}
$\it(1)$ It follows from Lemma \ref{lemma 1} that $(CS)^{\sharp}(CS) = S^{*}S$ is a positive self-adjoint operator.\\
$\it(2)$ We obtain from Lemma \ref{lemma 1} that $(CS)(CS)^{\sharp} = CSS^{*}C$ is positive. Now, we claim that $D(CSS^{*}C) = D(SS^{*}C)$ is dense in $K.$\\
Let $h \in K.$ Then there exists $k \in K$ such that $Ck =h.$ So, we get a sequence $\{p_{n}\}$ in $D(SS^{*})$ with $p_{n} = Cq_{n}$ for all $n \in \mathbb{N}$ such that $p_{n} \to k$ and $q_{n} = Cp_{n} \to Ck= h,$ as $n \to \infty.$ So, $h \in \overline{D(SS^{*}C)}.$ This shows that $D((CS)(CS)^{\sharp})$ is dense in $K.$\\
Let $y \in D(CSS^{*}C)^{*}.$ For all $x \in D((CS)(CS)^{\sharp}),$ we have 
\begin{align}\label{equ 1}
g(x) = \langle x, (CSS^{*}C)^{*}y\rangle = \langle CSS^{*}Cx, y \rangle = \langle Cy, SS^{*}Cx\rangle
\end{align}
is bounded. For any $z \in D(SS^{*}),$ consider $f(z) = \langle SS^{*}z, Cy\rangle.$ Then we get an element $u \in K$ with $Cu = z$ and $$|f(z)| = |\langle SS^{*}Cu, Cy\rangle| = |\langle u, (CSS^{*}C)^{*}y \rangle| \leq \|z\| \|(CSS^{*}C)^{*}y\|.$$ It confirms that $Cy \in D(SS^{*}).$ By the identity (\ref{equ 1}), we get $$\langle x, (CSS^{*}C)^{*}y\rangle = \langle CSS^{*}Cx, y \rangle = \langle SS^{*}Cy, Cx\rangle = \langle x, CSS^{*}Cy\rangle.$$ Hence, $(CSS^{*}C)^{*} \subset (CSS^{*}C).$ Let $p \in D(CSS^{*}C).$ Then, for all $q \in D(SS^{*}C),$ $$f_{1}(q) = \langle CSS^{*}Cq, p \rangle = \langle Cp, SS^{*}C q\rangle = \langle SS^{*}Cp, Cq\rangle = \langle q, CSS^{*}Cp\rangle$$ is bounded. Now, it follows that $p \in D(CSS^{*}C)^{*}.$ Therefore, $$CS(CS)^{\sharp} = CSS^{*}C = (CSS^{*}C)^{*}$$ is a positive self-adjoint operator.
\end{proof}
\begin{theorem}\label{thm 7}
Let $S$ be a densely defined closed linear operator from $H$ to $K$ and $C$ be a conjugation on $K.$ Then the following statements are equivalent:
\begin{enumerate}
 \item $R(CS)^{\sharp}$ is closed.
\item $R((CS)^{\sharp}(CS)) = R(CS)^{\sharp}.$
\item $R((CS)^{\sharp}(CS))$ is closed.
\item $R(CS)$ is closed.
\end{enumerate}
\end{theorem}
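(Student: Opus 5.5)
The plan is to reduce everything to the classical linear closed range theorem by means of Lemma \ref{lemma 1}, which gives $(CS)^{\sharp} = S^{*}C$, so that $(CS)^{\sharp}(CS) = S^{*}CCS = S^{*}S$. Since $C$ is a bijective isometry of $K$ onto itself (antilinear, with $C^{2}=I$), we have $R(S^{*}C) = S^{*}(C(D(S^{*}C))) = S^{*}(D(S^{*})) = R(S^{*})$, because $C$ maps $D(S^{*}C)=C^{-1}(D(S^{*}))=C(D(S^{*}))$ onto $D(S^{*})$. Likewise $R(CS) = C(R(S))$, and $C$, being an isometric bijection (a homeomorphism of $K$ onto itself), maps closed sets to closed sets and back. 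Consequently (1) is equivalent to $R(S^{*})$ being closed, (2) is equivalent to $R(S^{*}S) = R(S^{*})$, (3) is equivalent to $R(S^{*}S)$ being closed, and (4) is equivalent to $R(S)$ being closed.

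It therefore suffices to prove, for a densely defined closed linear $S$, that the statements ``$R(S^{*})$ closed'', ``$R(S^{*}S)=R(S^{*})$'', ``$R(S^{*}S)$ closed'' and ``$R(S)$ closed'' are equivalent. The equivalence ``$R(S)$ closed $\Leftrightarrow$ $R(S^{*})$ closed'' is the classical closed range theorem for closed densely defined operators, which I would cite. For (1)$\Rightarrow$(2): always $R(S^{*}S)\subseteq R(S^{*})$. Conversely, given $S^{*}y$ with $y\in D(S^{*})$, decompose $y = y_{1}+y_{2}$ with $y_{1}\in \overline{R(S)}$ and $y_{2}\in R(S)^{\perp}=N(S^{*})$. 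Since $R(S)$ is closed, by (1)$\Leftrightarrow$(4), we have $y_{1}=Sx$ for some $x\in D(S)$, and $y_1 = y-y_2\in D(S^*)$. Hence $S^{*}y = S^{*}y_{1}=S^{*}Sx\in R(S^{*}S)$. The implication (2)$\Rightarrow$(3) is immediate from (1), once (2) is combined with the equivalence with (1) established in the next step.

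For (3)$\Rightarrow$(4), I would use that $S^{*}S$ is a positive self-adjoint operator (Lemma \ref{lemma 6}). A self-adjoint operator with closed range is bounded below on $N(S^{*}S)^{\perp}\cap D(S^{*}S)$, so there is $c>0$ with $\|S^{*}Sx\|\ge c\|x\|$ there. Then $\|Sx\|^{2}=\langle S^{*}Sx,x\rangle\ge c\|x\|^{2}$ on that set. Since $N(S^{*}S)=N(S)$ and $D(S^{*}S)$ is a core for $S$, this lower bound extends to $\|Sx\|\ge \sqrt{c}\,\|x\|$ on $D(S)\cap N(S)^{\perp}$, and a closed operator bounded below on $D(S)\cap N(S)^{\perp}$ has closed range. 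I expect this step to be the main obstacle, because the core argument needs care: approximate $x\in D(S)\cap N(S)^{\perp}$ in graph norm by elements of $D(S^{*}S)$, then project those approximants onto $N(S)^{\perp}$. This projection stays inside $D(S^{*}S)$ because $N(S)\subseteq D(S^{*}S)$. Finally, to close the logic we need (2)$\Rightarrow$(1) or (3), and this is trivial when read as (2) together with the chain (3)$\Rightarrow$(4)$\Rightarrow$(1)$\Rightarrow$(2). It is safer, though, to show (2)$\Rightarrow$(3) directly. Here $\overline{R(S^{*}S)} = N(S^{*}S)^{\perp}=N(S)^{\perp}=\overline{R(S^{*})}$, and I would argue via the spectral theorem that equality $R(S^{*}S)=R((S^{*}S)^{1/2})$ forces $0$ to be isolated in the spectrum of $S^{*}S$. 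This uses the identification $R(S^{*})=R(|S|)$ coming from the linear polar decomposition, and it yields closedness.
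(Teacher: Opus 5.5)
Your proof is correct, but it is organized differently from the paper's. You move the whole statement to the linear setting at the outset. Since $(CS)^{\sharp}=S^{*}C$, $(CS)^{\sharp}(CS)=S^{*}S$, $R(S^{*}C)=R(S^{*})$, and $R(CS)=C(R(S))$ with $C$ a homeomorphism of $K$, conditions (1)--(4) become statements about $R(S^{*})$, $R(S^{*}S)$ and $R(S)$. Then (1)$\Leftrightarrow$(4) is the classical closed range theorem, which you cite.

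The paper makes this reduction only for (1)$\Rightarrow$(2) and (3)$\Rightarrow$(1). For (1)$\Rightarrow$(4) it works directly with the antilinear operators:
\begin{itemize}
\item it proves that $C|S^{*}|C=((CS)(CS)^{\sharp})^{1/2}$ is self-adjoint;
\item it builds a bounded antilinear $E$ with $E(CS)^{\sharp}=((CS)(CS)^{\sharp})^{1/2}$;
\item it takes adjoints to get $R(((CS)(CS)^{\sharp})^{1/2})\subseteq R(CS)$;
\item it closes the chain $R((CS)(CS)^{\sharp})\subseteq R(((CS)(CS)^{\sharp})^{1/2})\subseteq R(CS)\subseteq R((CS)(CS)^{\sharp})$.
\end{itemize}
Its (4)$\Rightarrow$(1) splits $w\in D(S^{*}C)$ along $N(S^{*}C)\oplus R(CS)$, which actually yields (2).

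Your version is much shorter and shows that the theorem is the linear closed range theorem conjugated by $C$. The paper's avoids appealing to that theorem, in effect reproving one direction by a Douglas-type factorization, and it gets the extra identity $R(CS)=R((CS)(CS)^{\sharp})$ along the way.

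At (2)$\Rightarrow$(3) the two proofs coincide. Both rewrite (2) as $R(S^{*}S)=R(|S|)$, you via polar decomposition and the paper via Douglas' theorem. Both then need the fact that, for positive self-adjoint $A$, $R(A)=R(A^{1/2})$ forces $R(A)$ to be closed. The paper states this without justification; your spectral-theorem argument supplies the mechanism.

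Two small points.

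First, your remark that (2)$\Rightarrow$(3) follows from the chain (3)$\Rightarrow$(4)$\Rightarrow$(1)$\Rightarrow$(2) is circular. The direct argument is therefore necessary, not just safer, and it deserves to be written out. Let $T$ be the inverse of $|S|$ restricted to $N(S)^{\perp}$. This is a self-adjoint operator in $N(S)^{\perp}$ with $D(T)=R(|S|)$. The hypothesis $R(|S|^{2})=R(|S|)$ says exactly that $T(D(T))\subseteq D(T)$, i.e.\ $D(T^{2})=D(T)$. By the spectral theorem this forces $T$ to be bounded, so $R(S^{*}S)=R(|S|)=N(S)^{\perp}$ is closed.

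Second, in (3)$\Rightarrow$(4), the step from $\|S^{*}Sx\|\ge c\|x\|$ to $\langle S^{*}Sx,x\rangle\ge c\|x\|^{2}$ uses positivity through the spectral theorem. It is true, but not automatic. You can skip both this step and the core argument, as the paper does. The chain $R(S^{*}S)\subseteq R(S^{*})\subseteq\overline{R(S^{*})}=N(S)^{\perp}=N(S^{*}S)^{\perp}=\overline{R(S^{*}S)}$ gives (3)$\Rightarrow$(1) and (3)$\Rightarrow$(2) at once. The cited closed range theorem then gives (4).
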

\begin{proof}
$\text{(1) } \Rightarrow \text{ (2)}$ Since $R(CS)^{\sharp}$ is closed. Then $R(CS)^{\sharp}= R(S^{*}C) = R(S^{*})$ is closed. So, $R(S^{*}) = R(S^{*}S) = R((CS)^{\sharp}(CS)).$\\
$\text{(2) } \Rightarrow \text{ (3)}$ It is obvious to show that $\vert S \vert = ((CS)^{\sharp}(CS))^{\frac{1}{2}}.$ By the Douglas Theorem of unbounded operators (Theorem 2\cite{MR0203464}), we get $R(\vert S\vert) = R(S^{*}).$ Again, $R(S^{*}) = R(S^{*}C) = R(CS)^{\sharp}.$ From the given condition, we obtain $$R((CS)^{\sharp}(CS)) = R(CS)^{\sharp} = R(\vert S \vert) = R(((CS)^{\sharp}(CS))^{\frac{1}{2}}).$$ Hence, $R((CS)^{\sharp}(CS))$ is closed.\\
$\text{(3) } \Rightarrow \text{ (1)}$ When $R((CS)^{\sharp}(CS)) = R(S^{*}S)$ is closed, then $$R(CS)^{\sharp} = R(S^{*}C) = R(S^{*})$$ is also closed.\\
$\text{(1) } \Rightarrow \text{ (4)}$ Assume $R(CS)^{\sharp} = R(S^{*})$ is closed. Let $x \in D(CS)= D(S).$ Then, we express $x$ as $x = x_{1} + x_{2},$ where $x_{1} \in R(CS)^{\sharp}$ and $x_{2} \in N(CS) = N(S).$ Again, $CSx =CSx_{1}= (CS)(CS)^{\sharp}z_{1},$ for some $z_{1}\in K$ with $x_{1} = (CS)^{\sharp}z_{1}.$ So, 
\begin{align}\label{equ 2}
R(CS) \subset R((CS)(CS)^{\sharp}).
\end{align}
Now, we claim that $R(CS) \supset R((CS)(CS)^{\sharp})^{\frac{1}{2}}.$ Renaming $((CS)(CS)^{\sharp})^{\frac{1}{2}}$ as $A$ and $CS$ as $B,$ we get $AA = BB^{\sharp}.$ Again, we define a new antilinear operator $E$ as $E(B^{\sharp}x) = Ax$ for all $x \in D(BB^{\sharp}) = D(A^{2}),$ and $Ey = 0$ for all $y \in R(B^{\sharp})^{\perp} = N(S).$ By Lemma \ref{lemma 6}, we have $D(A^{2}) = D(BB^{\sharp})$ is a core of $A$ and from the given condition $R(S^{*}) = R(B^{\sharp})$ is closed. It is evident that $CSS^{*}C = (C\vert S^{*}\vert C)^{2}.$\\ 
Now, we will prove that the positive operator $C\vert S^{*}\vert C$ is self-adjoint. Moreover, $D(C \vert S^{*}\vert C)$ is dense in $K$ because of $D(C \vert S^{*}\vert C) \supset D(CSS^{*}C).$\\
Let $y \in D((C\vert S^{*}\vert C)^{*}).$ Then for all $x \in D(C\vert S^{*}\vert C),$ we have 
\begin{align}\label{equ 3}
\langle x, (C\vert S^{*}\vert C)^{*}y\rangle = \langle C\vert S^{*}\vert Cx, y \rangle = \langle Cy, \vert S^{*}\vert Cx\rangle.
\end{align}
Again, for any $z \in D(\vert S^{*} \vert)$, we obtain $f(z) = \langle \vert S^{*}\vert z, Cy\rangle = \langle \vert S^{*}\vert Cz_{1}, Cy \rangle,$ for some $z_{1} \in K$ with $Cz_{1}= z.$ It follows from the identity (\ref{equ 3}) that $$\vert f(z)\vert \leq \|z_{1}\| \|(C\vert S^{*}\vert C)^{*}y\| = \|z\| \|(C\vert S^{*}\vert C)^{*}y\|.$$ Thus, $Cy \in D(\vert S^{*} \vert).$ From the relation (\ref{equ 3}), we can express $$\langle x, (C\vert S^{*}\vert C)^{*}y\rangle = \langle \vert S^{*}\vert Cy,  Cx\rangle = \langle x, C\vert S^{*}\vert C y \rangle.$$ which implies that $(C\vert S^{*}\vert C)^{*} \subset C \vert S^{*}\vert C.$\\
Let $z \in D(C \vert S^{*}\vert C).$ Then, for all $u \in D(C \vert S^{*}\vert C),$ we have $$\langle C \vert S^{*}\vert C u, z \rangle = \langle Cz, \vert S^{*}\vert Cu \rangle = \langle u, C\vert S^{*}\vert Cz \rangle.$$ So, $z \in D((C\vert S^{*}\vert C)^{*}).$ Now. we ensure that $C\vert S^{*}\vert C$ is self-adjoint. Hence, $A = ((CS)(CS)^{\sharp})^{\frac{1}{2}} = C\vert S^{*}\vert C.$ When $w \in D(B^{\sharp}) = D(S^{*}C),$ then $Cw \in D(\vert S^{*} \vert)$ which leads to $w \in D(\vert S^{*}\vert C) = D(A).$ It yields that there exists a sequence $w_{n}$ in $D(A^{2})$ such that $w_{n} \to w$ and $Aw_{n} \to Aw,$ as $n \to \infty.$ 
Now, we extend the antilinear operator $E$ on $K$, by defining $E(B^{\sharp}w) = Aw,$ we obtain 
\begin{align}\label{equ 4}
\|E(B^{\sharp}w)\| = \|Aw\| = \lim_{n \to \infty} \|Aw_{n}\|= \lim_{n \to \infty}\|B^{\sharp}w_{n}\|.
\end{align}
It means that $\{B^{\sharp}w_{n}\}$ is Cauchy because $\{Aw_{n}\}$ is Cauchy and $B^{\sharp}$ is closed (by Lemma \ref{lemma 2}) which yields that $B^{\sharp}w_{n} \to B^{\sharp}w,$ as $n \to \infty.$ By the identity (\ref{equ 4}), we get $\|E(B^{\sharp}w)\| = \|B^{\sharp}w\|,$ from which it follows that $E$ is bounded antilinear. It is obvious that $D(A) = D(\vert S^{*} \vert C) = D(B).$ The relation $EB^{\sharp} = A$ says that $A = A^{*} = BE^{\sharp}.$\\ It concludes that 
\begin{align}\label{equ 5}
R((CS)(CS)^{\sharp})^{\frac{1}{2}} = R(A) \subset R(B) = R(CS)
\end{align}
By the identities (\ref{equ 2}) and (\ref{equ 5}), we get $$R((CS)(CS)^{\sharp}) \subset R((CS)(CS)^{\sharp})^{\frac{1}{2}} \subset R(CS) \subset R((CS)(CS)^{\sharp}).$$ Therefore, $R(CS) = R((CS)(CS)^{\sharp}) = R((CS)(CS)^{\sharp})^{\frac{1}{2}}$ is closed.\\
$\text{(4) } \Rightarrow \text{ (1)}$ Since $R(CS)$ is closed. Let $q \in R(CS)^{\sharp}.$ Then $q = (CS)^{\sharp}w,$ for some $w = w_{1} + w_{2} \in D(S^{*}C),$ where $w_{1} \in N(S^{*}C),$ and $w_{2} \in N(S^{*}C)^{\perp} \cap D(S^{*}C).$ Again, $q = (CS)^{\sharp}(CS)w_{2}^{'},$ where $w_{2} = (CS)w_{2}^{'}$ for some $w_{2}^{'} \in H.$ Hence $$R(CS)^{\sharp} \subset R((CS)^{\sharp}(CS)) \subset R(CS)^{\sharp}.$$
Therefore, $R(CS)^{\sharp}$ is closed.
\end{proof}
\begin{corollary}\label{cor 8}
Let $T$ be a densely defined closed antilinear operator from $H$ to $K.$ Then $R(T)$ is closed if and only if $R(T^{\sharp})$ is closed.
\end{corollary}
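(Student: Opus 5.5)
The plan is to reduce the corollary to Theorem \ref{thm 7} by writing $T$ in the form $CS$ with $S$ a densely defined closed linear operator. Fix any conjugation $C$ on $K$; for instance, choose an orthonormal basis of $K$ and conjugate the coefficients. Put $S = T_{1} = CT$, with $D(S) = D(T)$, which is dense in $H$. Since $C$ is antilinear and $T$ is antilinear, $S$ is linear. Moreover $T = C^{2}T = CS$.

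The only point needing an argument is that $S$ is closed; this is the step I expect to be the (mild) obstacle, since Remark \ref{remark 5} asserts it only in passing. I would check it directly on graphs. Suppose $x_{n} \in D(S)$, $x_{n} \to x$ and $Sx_{n} = CTx_{n} \to y$. Because $C$ is an isometry with $C^{2}=I$, we get $Tx_{n} = C(CTx_{n}) \to Cy$. Since $T$ is closed, $x \in D(T) = D(S)$ and $Tx = Cy$, hence $Sx = CTx = y$. So $S$ is closed, exactly as in the first part of Lemma \ref{lemma 1} with the roles of $T$ and $S$ interchanged.

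With $S$ densely defined and closed and $T = CS$, Theorem \ref{thm 7} applies directly. Its equivalence (1)$\Leftrightarrow$(4) says that $R((CS)^{\sharp})$ is closed if and only if $R(CS)$ is closed, that is, $R(T^{\sharp})$ is closed if and only if $R(T)$ is closed. As a consistency check, Lemma \ref{lemma 1} gives $T^{\sharp} = S^{*}C$, so $R(T^{\sharp}) = R(S^{*})$ because $C$ is a bijection of $K$. Likewise $R(T) = C(R(S))$ is closed exactly when $R(S)$ is, since $C$ is an isometric bijection. Hence the statement also follows from the classical linear closed range theorem for $S$ and $S^{*}$, which gives a second route if one prefers not to rely on the longer chain in Theorem \ref{thm 7}.
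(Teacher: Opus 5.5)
Your proposal is correct and follows the paper's own proof: write $T = CT_{1}$ with $T_{1} = CT$ densely defined and closed, then invoke the equivalence (1)$\Leftrightarrow$(4) of Theorem~\ref{thm 7}; your graph argument for the closedness of $T_{1}$ supplies a step the paper only asserts. Your second route is also valid and more elementary, since it bypasses the long argument in Theorem~\ref{thm 7}: it uses $T^{\sharp} = S^{*}C$ from Lemma~\ref{lemma 1}, the identities $R(T^{\sharp}) = R(S^{*})$ and $R(T) = C(R(S))$, and the classical closed range theorem for $S$ and $S^{*}$.
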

\begin{proof}
Since, $T$ is  densely defined closed. Then $T_{1} = CT$ is also a densely defined closed operator from $H$ to $K,$ where C is a conjugation on $K.$ By Theorem \ref{thm 7}, we obtain that $R(T) = R(CT_{1})$ is closed if and only if $R(T^{\sharp}) = R(CT_{1})^{\sharp}$ is closed.
\end{proof}
\begin{theorem}\label{thm 9}
Let $T$ be a densely defined closed antilinear operator from $H$ to $K.$ If $\dim R(T)^{\perp} < \infty,$ then $R(T)$ is closed.
\end{theorem}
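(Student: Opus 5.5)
The plan is to reduce to the linear case, as in Corollary \ref{cor 8}. Fix a conjugation $C$ on $K$ and write $T = CT_{1}$ with $T_{1} = CT$. By Remark \ref{remark 5}, $T_{1}$ is a densely defined closed linear operator with $D(T_{1}) = D(T)$. Since $C$ is an isometric bijection, $R(T) = C\,R(T_{1})$, and $R(T)$ is closed exactly when $R(T_{1})$ is closed. For the complements,
\[
R(T)^{\perp} = \{y : \langle CT_{1}x, y\rangle = 0 \ \forall x\} = \{y : \langle Cy, T_{1}x\rangle = 0\ \forall x\} = C\bigl(R(T_{1})^{\perp}\bigr),
\]
so $\dim R(T)^{\perp} = \dim R(T_{1})^{\perp}$. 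The statement is therefore equivalent to the same statement for the closed linear operator $T_{1}$. For the remaining step I would try the standard route: decompose $K = \overline{R(T_{1})} \oplus R(T_{1})^{\perp}$, note that the second summand is finite dimensional, and then show that $R(T_{1})$ is closed in $\overline{R(T_{1})}$.

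The main obstacle is that last step, and it cannot be done from the hypothesis as worded. The condition $\dim R(T)^{\perp} < \infty$ gives information only about $\overline{R(T)}$, not about $R(T)$ itself. Every operator with dense range has $R(T)^{\perp} = \{0\}$, and dense non-closed ranges exist.

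Concretely, take $K = H = \ell^{2}$ with the conjugation $C(\xi_{n}) = (\overline{\xi_{n}})$, and let
\[
T(\xi_{n}) = \bigl(\overline{\xi_{n}}/n\bigr).
\]
This $T$ is a bounded, hence closed, everywhere defined antilinear operator. It is injective and $R(T) \supset c_{00}$, so $R(T)$ is dense and $R(T)^{\perp} = \{0\}$. However, $(1/n) \notin R(T)$, since its preimage would have to be $(1,1,1,\dots) \notin \ell^{2}$. So $R(T)$ is not closed. By Corollary \ref{cor 8}, $R(T^{\sharp})$ is not closed either, so Theorem \ref{thm 7} gives no help here.

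Consequently, what I would actually write after this statement is the reduction above followed by this counterexample, which shows that the statement as worded is false. I would then record the correct version: if $R(T)$ has finite algebraic codimension in $K$, that is, $R(T) \oplus F = K$ for some finite-dimensional subspace $F$, then $R(T)$ is closed. The proof of that version goes through $T_{1}$ using the bounded bijection $\widehat{T_{1}} \oplus \iota_{F} : \bigl(D(T_{1})/N(T_{1}), \|\cdot\|_{T_{1}}\bigr) \oplus F \to K$. This map is bounded because $T_{1}$ is closed, so $D(T_{1})$ is complete in the graph norm. The open mapping theorem then shows it is an isomorphism, and hence $R(T_{1})$, the image of a closed subspace, is closed. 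No argument can establish the theorem with only the orthogonal-complement hypothesis, so the proof must either be corrected to this stronger hypothesis or the statement must be retracted.
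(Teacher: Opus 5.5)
Your proposal is correct, and refuting the statement is the right response to it. Your counterexample is valid. For $T\xi=(\overline{\xi_{n}}/n)_{n}$ on $\ell^{2}$ one has $c_{00}\subset R(T)$, hence $R(T)^{\perp}=\{0\}$, while $(1/n)_{n}\in\overline{R(T)}\setminus R(T)$. So the theorem is false as worded. If one insists on a genuinely unbounded $T$, take $T\xi=(\overline{d_{n}\xi_{n}})_{n}$ on its maximal domain, with $d_{n}=n$ for even $n$ and $d_{n}=1/n$ for odd $n$. This $T$ is closed by Lemma \ref{lemma 1}, its range still contains $c_{00}$, and it misses the vector with entries $1/n$ at odd $n$ and $0$ at even $n$.

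The paper's proof fails exactly where you said any proof must. It writes $T=CT_{1}$ and sets $T_{0}=CT_{1}^{*}C$. Using Lemma \ref{lemma 1} and Theorem \ref{thm 4} it gets $T=(CT_{0})^{\sharp}=T_{0}^{*}C$, whence $\dim R(T_{0}^{*})^{\perp}=\dim R(T)^{\perp}<\infty$. It then asserts, without justification, ``So, $R(T_{0}^{*})$ is closed.'' The later verification that $T_{0}^{*}=CT_{1}C$ only recovers $R(T_{0}^{*})=R(T)$, which was already evident from $T=T_{0}^{*}C$.

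So the paper, like you, passes to a closed linear operator with the same range. Your reduction via $R(T)=C\,R(T_{1})$ and $R(T)^{\perp}=C\bigl(R(T_{1})^{\perp}\bigr)$ is the more direct one. The paper's one substantive step is the linear claim that a closed operator whose range has finite-dimensional orthogonal complement has closed range. The linear operator $\mathrm{diag}(1/n)$ on $\ell^{2}$ already refutes that claim. The gap is in the paper, not in your proposal.

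Your repaired statement is also correct. Assume finite algebraic codimension: $K=R(T)+F$, $R(T)\cap F=\{0\}$, $\dim F<\infty$. Your open-mapping argument works once you apply it to $T_{1}$ with the complement $CF$. Since $C$ is an antilinear bijection, $CF$ is again a complex subspace of the same dimension, with $K=R(T_{1})+CF$ and $R(T_{1})\cap CF=\{0\}$.

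Note also that the first half of the remark following the theorem rests on it and needs the same corrected hypothesis. For $T=C\,\mathrm{diag}(1/n)$ and $T_{1}=\mathrm{diag}(n)$ one gets $T_{1}T=C$, so $(T_{1}T)^{\sharp}=C$. By contrast, $T^{\sharp}T_{1}^{*}=\mathrm{diag}(1/n)\,C\,\mathrm{diag}(n)$ is the restriction of $C$ to the proper subspace $D(\mathrm{diag}(n))$.
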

\begin{proof}
Firstly, we write $T = CT_{1},$ where $T_{1} = CT$ is a densely defined closed operator from $H$ to $K$ and $C$ is a conjugation on $K.$ By Theorem \ref{thm 4} and Lemma \ref{lemma 1}, we get $$CT_{1} = ((CT_{1})^{\sharp})^{\sharp} = (T_{1}^{*}C)^{\sharp} = (CT_{0})^{\sharp},$$ where the densely defined closed operator $T_{0} = CT_{1}^{*}C.$ So, $CT_{0} = (CT_{1})^{\sharp}.$ By the given condition, we obtain $$\dim R(T_{0}^{*})^{\perp} = \dim R(T_{0}^{*}C)^{\perp}  = \dim R(CT_{1})^{\perp} < \infty.$$ So, $R(T_{0}^{*})$ is closed. Now, we claim that $T_{0}^{*} = CT_{1}C.$ Let $y \in D(T_{0}^{*}).$ For all $x \in D(T_{0}),$ we have 
\begin{align}\label{equ 6}
\langle x, T_{0}^{*}y\rangle = \langle T_{0}x, y \rangle = \langle Cy, T_{1}^{*}Cx\rangle.
\end{align}
Consider a new functional for  $z \in D(T_{1}^{*}),$ $f(z) = \langle T_{1}^{*}z, Cy \rangle =  \langle T_{1}^{*}Cz_{1}, Cy \rangle,$ where $Cz_{1} = z$ for some $z_{1} \in K.$ From the identity (\ref{equ 6}), it follows that $f(z)$ is bounded for all $z \in D(T_{1}^{*}).$ Thus, $Cy \in D(T_{1}).$ Again, from equation (\ref{equ 6}), we deduce that
$$ \langle x, T_{0}^{*}y\rangle = \langle T_{0}x, y \rangle = \langle T_{1}Cy, Cx\rangle = \langle x, CT_{1}Cy\rangle.$$ Hence, $T_{0}^{*} \subset CT_{1}C.$ Now let us consider an element $z \in D(CT_{1}C).$ Then for all $x \in D(T_{0}),$ we have $\langle T_{0}x, z \rangle = \langle CT_{1}^{*}Cx, z \rangle = \langle T_{1}Cz, Cx\rangle = \langle x, CT_{1}Cz\rangle.$ It shows that $z \in D(T_{0}^{*})$ which yields that $T_{0}^{*} = CT_{1}C.$ Therefore, $$R(T) = R(CT_{1}) = R(CT_{1}C) = R(T_{0}^{*})$$ is closed.
 \end{proof}
 \begin{remark}
As a consequence of Theorem \ref{thm 9}, one obtains
\[
(T_{1}T)^{*}=T^{*}T_{1}^{*},
\]
where \(T\) is a densely defined closed antilinear operator on \(H\) satisfying
\[
\dim R(T)^{\perp}<\infty,
\]
and \(T_{1}\) is a densely defined closed linear (or antilinear) operator on \(H\) such that \(T_{1}T\) is densely defined. The proof follows along the same lines as that of Corollary 1 in \cite{martin}.

Conversely, the condition
\[
\dim R(T)^{\perp}<\infty
\]
also follows from the identity
\[
(T_{1}T)^{*}=T^{*}T_{1}^{*}
\]
whenever this equality holds for all densely defined closed antilinear operators \(T_{1}\) and \(T\) on \(H\). The argument is analogous to the proof of Theorem 2 in \cite{jawvan}.
\end{remark}

 Antilinear symmetric operators were first introduced in \cite{IanK}, where author was referred to as conjugate-linear operators. In particular, Theorem~B of \cite{IanK} states that every maximal antilinear symmetric operator is necessarily an antilinear self-adjoint operator on a Hilbert space.

\begin{definition}
Let $T$ be a densely defined antilinear operator on $H$. Then $T$ is called \emph{antilinear symmetric} if $T \subset T^{\sharp}$. If $T = T^{\sharp}$, then $T$ is called \emph{antilinear self-adjoint}.
\end{definition}

By Theorem~13.12 and Lemma~13.13 of \cite{MR2953553}, every densely defined positive symmetric linear operator $T$ on $H$ admits a positive self-adjoint extension on $H$. Moreover, the minimal and maximal such extensions are known as the Krein--von Neumann extension and the Friedrichs extension of $T$, respectively.

 Theorem~B in \cite{IanK} shows that every densely defined  antilinear symmetric operator on a Hilbert space also admits an antilinear self-adjoint extension. It is therefore natural to ask  every densely defined  positive antilinear symmetric operator has a positive antilinear self-adjoint extension in the same Hilbert space and investigate the existence and characterization of the minimal and maximal positive antilinear  self-adjoint extensions of a densely defined antilinear positive symmetric operator.

 \begin{definition}\label{def 10}
 A densely defined closed antilinear operator $T$ on $H$ is called antilinear normal if $TT^{\sharp} = T^{\sharp}T.$
 \end{definition}
 The above Definition \ref{def 10} is inspired by the concept of bounded $C$-normal operators $S$ on $H$ characterized by
\[
(CS)^{\sharp}(CS) = (CS)(CS^{\sharp})
\]
(\cite{ptak2019c}). Analogously, we define a densely defined closed linear operator $S$ on $H$ to be $C$-normal (here $C$ is a conjugation on $H$) if
\[
(CS)(CS)^{\sharp} = (CS)^{\sharp}(CS).
\]
\begin{lemma}\label{lemma 11}
Let $S$ be a densely defined closed linear operator on $H$ and $C$ be a conjugation on $H$. $S$ is $C$-normal if and only if $$\|(CS)^{\sharp}x\| = \|CSx\|,$$ for all $x \in D(CS) = D(CS)^{\sharp}.$
\end{lemma}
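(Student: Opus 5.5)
We reduce the statement to the classical characterization of unbounded normal operators by translating through Lemma \ref{lemma 1}. By Lemma \ref{lemma 1}, $(CS)^{\sharp}=S^{*}C$, so $(CS)^{\sharp}(CS)=S^{*}S$ and $(CS)(CS)^{\sharp}=CSS^{*}C$. By Lemma \ref{lemma 6}, both are positive self-adjoint operators. Their square roots are $|S|=(S^{*}S)^{1/2}$ and $A:=((CS)(CS)^{\sharp})^{1/2}=C|S^{*}|C$; the latter identification was established in the proof of Theorem \ref{thm 7}. We also use two standard domain facts for closed operators: $D(S)=D(|S|)$ with $\|Sx\|=\||S|x\|$, and $D(S^{*})=D(|S^{*}|)$ with $\|S^{*}y\|=\||S^{*}|y\|$.

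For the forward direction, suppose $S$ is $C$-normal, i.e.\ $S^{*}S=CSS^{*}C$. Equal positive self-adjoint operators have equal square roots, so $|S|=C|S^{*}|C$. Comparing domains gives
\[
D(CS)=D(S)=D(|S|)=D(C|S^{*}|C)=C\,D(|S^{*}|)=C\,D(S^{*})=D(S^{*}C)=D((CS)^{\sharp}).
\]
For $x$ in this common domain, since $C$ is isometric,
\[
\|CSx\|=\|Sx\|=\||S|x\|=\|C|S^{*}|Cx\|=\||S^{*}|Cx\|=\|S^{*}Cx\|=\|(CS)^{\sharp}x\|.
\]

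For the converse, assume $D(CS)=D((CS)^{\sharp})$ and $\|(CS)^{\sharp}x\|=\|CSx\|$ on this domain. Then $|S|$ and $A=C|S^{*}|C$ are positive self-adjoint operators with the same domain $D(S)=C\,D(S^{*})$, and $\||S|x\|=\|Ax\|$ there. Polarization turns this norm equality into $\langle |S|x,|S|y\rangle=\langle Ax,Ay\rangle$ for all $x,y$ in the common domain. Now take $x\in D(S^{*}S)$ and $y\in D(A)$. Then $\langle Ax,Ay\rangle=\langle |S|^{2}x,y\rangle$, so $y\mapsto\langle Ax,Ay\rangle$ is bounded on $D(A)$. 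Hence $Ax\in D(A^{*})=D(A)$, and $A^{2}x=|S|^{2}x$. This gives $S^{*}S\subset A^{2}=CSS^{*}C$. Both sides are self-adjoint, and a self-adjoint operator has no proper self-adjoint extension, so $S^{*}S=CSS^{*}C$. Therefore $(CS)^{\sharp}(CS)=(CS)(CS)^{\sharp}$, and $S$ is $C$-normal.

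The main obstacle is justifying the identity $A=C|S^{*}|C$ and the accompanying domain bookkeeping, in particular $D(C|S^{*}|C)=C\,D(S^{*})$. The self-adjointness of $C|S^{*}|C$ and the fact that its square is $CSS^{*}C$ were already verified in the proof of Theorem \ref{thm 7}. Uniqueness of positive square roots then gives the identity, and the domain equality follows from $D(|S^{*}|)=D(S^{*})$. Everything else is routine use of polarization and the maximality of self-adjoint operators.
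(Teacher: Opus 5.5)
Your proof is correct, but it takes a different route from the paper's.

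\textbf{The paper's route.} The paper never passes to square roots.
\begin{itemize}
\item In the forward direction, it first gets $\|(CS)^{\sharp}p\|=\|CSp\|$ on $D(S^{*}S)=D(CSS^{*}C)$ from the operator identity. It then extends this to all of $D(S)$, and symmetrically to $D(S^{*}C)$, by a core argument. This uses that $D(S^{*}S)$ is a core for $S$, that $D(SS^{*})$ is a core for $S^{*}$, and that $S$ and $S^{*}$ are closed.
\item In the converse, it uses explicit polarization-identity computations to prove both domain inclusions $D(S^{*}S)\subset D(CSS^{*}C)$ and $D(CSS^{*}C)\subset D(S^{*}S)$ by hand. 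It also checks that the two operators agree on $D(S^{*}S)$.
\end{itemize}

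\textbf{Your route.} You transport the whole problem to the positive self-adjoint operators $|S|$ and $C|S^{*}|C$.
\begin{itemize}
\item The forward direction reduces to uniqueness of positive square roots, together with $D(T)=D(|T|)$ and $\|Tx\|=\||T|x\|$.
\item The converse becomes the classical normal-operator argument: polarize, then show $Ax\in D(A^{*})$ for $x\in D(|S|^{2})$. This yields only $S^{*}S\subset CSS^{*}C$, and maximality of self-adjoint operators finishes.
\end{itemize}

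\textbf{Comparison.} Your argument is shorter and more conceptual, and it avoids the second domain inclusion entirely. The price is reliance on the identification $((CS)(CS)^{\sharp})^{1/2}=C|S^{*}|C$. In the paper this appears only inside the proof of Theorem~\ref{thm 7}, within the step that assumes $R((CS)^{\sharp})$ is closed. You correctly note that the parts you need do not use that hypothesis: $C|S^{*}|C$ is positive and self-adjoint, and $(C|S^{*}|C)^{2}=CSS^{*}C$. Both also follow directly from $(CBC)^{*}=CB^{*}C$ for densely defined linear $B$. The paper's argument is more elementary, since it uses no functional calculus, but it needs considerably more domain bookkeeping.
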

\begin{proof}
First consider the condition $(CS)^{\sharp}(CS)= (CS)(CS)^{\sharp}.$ Then $D(S^{*}S) = D(CSS^{*}C).$ When $p \in D((CS)(CS)^{\sharp}),$ then
$$\|(CS)^{\sharp}p\|^{2} = \langle (CS)(CS)^{\sharp}p, p\rangle = \langle (CS)^{\sharp}(CS)p, p\rangle = \|(CS)p\|^{2}.$$ Again $D((CS)(CS)^{\sharp}) = D(S^{*}S)$ is a core of $S.$ Let $u \in D(CS) =D(S).$ There exists a sequence $\{u_{n}\}$ in $D((CS)(CS)^{\sharp})$ such that $u_{n} \to u$ and $Su_{n} \to Su,$ as $n \to \infty.$ It follows that 
\begin{align}\label{equ 7}
\|(CS)u\|^{2} = \|Su\|^{2} = \lim_{n \to \infty}\|Su_{n}\|^{2}= \lim_{n \to \infty}\|(CS)^{\sharp}u_{n}\|^{2} = \lim_{n \to \infty}\|S^{*}Cu_{n}\|^{2}.
\end{align}
Since $Cu_{n} \to Cu$ as $n \to \infty$ and $\{S^{*}Cu_{n}\}$ is Cauchy, so $S^{*}Cu_{n} \to S^{*}Cu,$ as $n \to \infty.$ By the identity (\ref{equ 7}), we get $$\|(CS)u\| = \|(CS)^{\sharp}u\|,$$ for all $u \in D(CS) \subset D(CS)^{\sharp}$ which also yields that $D(CS)^{\sharp}$ is dense. Let us consider an element $v \in D(CS)^{\sharp}.$ Then $Cv \in D(S^{*}).$ Moreover, $D(SS^{*})$ is also a core of $S^{*}.$ It is evident that the existence of a sequence $\{Cv_{n}\}$ in $D(SS^{*})$ such that $Cv_{n} \to Cv$ and $S^{*}Cv_{n} \to S^{*}Cv,$ as $n \to \infty.$ Thus,
\begin{align}\label{equ 8}
\|(CS)^{\sharp}v\| = \lim_{n \to \infty} \|(CS)^{\sharp}v_{n}\|= \lim_{n \to \infty} \|(CS)v_{n}\| = \lim_{n \to \infty} \|Sv_{n}\|.
\end{align}
Since $S$ is closed which implies that $\|(CS)^{\sharp}v\| = \|Sv\| = \|(CS)v\|.$ It also concludes that $D(CS)^{\sharp} \subset D(CS).$ Hence, $$\|(CS)^{\sharp}x\| = \|(CS)x\|,$$ for all $x \in D(CS)^{\sharp} = D(CS).$\\
To prove conversely, consider the identity $$\|(CS)q\| = \|(CS)^{\sharp}q\|,$$ for all $q \in D(CS) = D(CS)^{\sharp}.$ Let $z \in D(S^{*}S) \subset D(S)= D(CS).$ We obtain
 \begin{align}\label{equ 9}
\langle S^{*}Sz, z \rangle =  \langle CSz, CSz\rangle = \langle S^{*}Cz, S^{*}Cz\rangle.
 \end{align}
Now, we claim that $S^{*}Cz \in D(S).$ Taking a linear functional $$f(y) = \langle S^{*}y, S^{*}Cz \rangle,$$ for all $y \in D(SS^{*}).$ We get $w \in H$ with $Cw = y$ which shows that $w \in D(SS^{*}C) \subset D(CS.)$ Again,
\begin{align*}
f(y) &=  \langle S^{*}Cw, S^{*}Cz \rangle\\ 
&= \frac{1}{4} \bigl[ \|S^{*}C(w +z)\|^{2} - \|S^{*}C(w-z)\|^{2} +i\|S^{*}C(w-iz)\|^{2} -i \|S^{*}C(w + iz)\|^{2}\bigr]\\
&= \frac{1}{4} \bigl[ \|CS(w +z)\|^{2} - \|CS(w-z)\|^{2} +i\|CS(w-iz)\|^{2} -i \|CS(w + iz)\|^{2}\bigr]\\
&=\frac{1}{4} \bigl[ \|S(z + w)\|^{2} - \|S(z-w)\|^{2} +i\|S(z+ iw)\|^{2} -i \|S(z-iw)\|^{2}\bigr]\\
&=\langle Sz, Sw\rangle = \langle S^{*}Sz, Cy \rangle.
\end{align*}
So, $f$ is bounded which shows that $S^{*}Cz \in D(S).$ It means that $D(S^{*}S) \subset D(CSS^{*}C).$ By equation (\ref{equ 9}), we get $$\langle S^{*}Sz, z \rangle = \langle Cz, SS^{*}Cz\rangle = \langle CSS^{*}Cz, z \rangle.$$ We know that $D(S^{*}S)$ is dense in $H$ which leads to show that $S^{*}S \subset CSS^{*}C.$\\ Now, we will show that $D(CSS^{*}C) = D(SS^{*}C) \subset D(S^{*}S).$ Let $h \in D(SS^{*}C) \subset D(S^{*}C) = D(S).$ So, $$\|Sh\|^{2} = \|S^{*}Ch\|^{2}= \langle h, CSS^{*}Ch\rangle.$$ For all $k \in D(CSS^{*}C),$ consider a linear functional $g(k) = \langle Sk, Sh\rangle.$ Then
\begin{align*}
g(k)
&= \frac{1}{4} \Big[
\|S(k+h)\|^{2} - \|S(k-h)\|^{2}
+ i\,\|S(k+ih)\|^{2} - i\,\|S(k-ih)\|^{2}
\Big] \\
&= \frac{1}{4} \Big[
\langle k+h, \, C S S^{*} C (k+h) \rangle
- \langle k-h, \, C S S^{*} C (k-h) \rangle \\
&\qquad\quad
+ i\,\langle k+ih, \, C S S^{*} C (k+ih) \rangle
- i\,\langle k-ih, \, C S S^{*} C (k-ih) \rangle
\Big]\\
&= \langle SS^{*}Ch, Ck \rangle.
\end{align*}
It confirms that $g$ is bounded. Thus, $Sh \in D(S^{*})$ which implies that $D(CSS^{*}C) \subset D(S^{*}S).$ Therefore, $$(CS)(CS)^{\sharp}= CSS^{*}C = S^{*}S= (CS)^{\sharp}(CS).$$
\end{proof}
 \begin{theorem}\label{thm 12}
 Let $T$ be a densely defined closed antilinear operator on $H$.Then $T$ is antilinear normal if and only if $\|Tx\| = \|T^{\sharp}x\|,$ for all $x \in D(T) = D(T^{\sharp}).$
 \end{theorem}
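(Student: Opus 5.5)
The plan is to reduce Theorem~\ref{thm 12} to Lemma~\ref{lemma 11} by the standard factorization of antilinear operators through a conjugation. Fix a conjugation $C$ on $H$ and put $S = CT$. Then $S$ is a linear operator with $D(S) = D(T)$, and $T = CS$.

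The first step is to check that $S$ is densely defined and closed. If $x_n \to x$ in $D(S)$ and $Sx_n = CTx_n \to y$, then $Tx_n \to Cy$, because $C$ is an isometric involution. Closedness of $T$ then gives $x \in D(T)$ and $Tx = Cy$, that is, $Sx = y$. This is the same argument as in Lemma~\ref{lemma 1}, run in the opposite direction. Once $S$ is known to be closed, Lemma~\ref{lemma 1} yields $T^{\sharp} = (CS)^{\sharp} = S^{*}C$. By Lemma~\ref{lemma 6}, both $T^{\sharp}T = S^{*}S$ and $TT^{\sharp} = CSS^{*}C$ are positive self-adjoint.

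Next we translate the two conditions. By Definition~\ref{def 10}, $T$ is antilinear normal exactly when $TT^{\sharp} = T^{\sharp}T$. Written in terms of $S$, this is $(CS)(CS)^{\sharp} = (CS)^{\sharp}(CS)$, which is precisely the definition of $S$ being $C$-normal. Likewise, the norm condition $\|Tx\| = \|T^{\sharp}x\|$ on $D(T) = D(T^{\sharp})$ is word for word the condition $\|CSx\| = \|(CS)^{\sharp}x\|$ on $D(CS) = D((CS)^{\sharp})$. Lemma~\ref{lemma 11} states that these two conditions on $S$ are equivalent, so the theorem follows immediately.

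The only point needing care is the domain clause. In the forward direction, the proof of Lemma~\ref{lemma 11} itself establishes $D(CS) = D((CS)^{\sharp})$, through the inclusions derived around (\ref{equ 7}) and (\ref{equ 8}). In the converse direction, equality of domains is part of the hypothesis. So nothing beyond Lemma~\ref{lemma 11} is required, and the main obstacle is simply confirming that $S = CT$ satisfies its hypotheses, namely closedness and density of the domain, which was handled in the first step.
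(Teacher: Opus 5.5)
Your proposal is correct and follows the paper's proof. You write $T = CS$ with $S = CT$ densely defined and closed, note that $T^{\sharp} = S^{*}C$, and observe that antilinear normality of $T$ and the norm identity on $D(T) = D(T^{\sharp})$ are literally the two equivalent conditions of Lemma~\ref{lemma 11}. Your explicit check that $S$ is closed fills in a step the paper only asserts.
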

 \begin{proof}
 We express $T = CT_{1},$ where $T_{1} = CT$ is a densely defined closed linear operator on $H$ and $C$ is a conjugation on $H.$  By Lemma \ref{lemma 11}, we find that $T = CT_{1}$ is antilinear normal if and only if $(CT_{1})(CT_{1})^{\sharp} = (CT_{1})^{\sharp}(CT_{1})$ if and only if $\|Tx\| = \|CT_{1}x\| = \|(CT_{1})^{\sharp}x\| = \|T^{\sharp}x\|,$ for all $x \in D(T) = D(T^{\sharp}).$
 \end{proof}
 \begin{example}
Let $\phi \in L^{2}(\mathbb{R})$. The multiplication operator $M_{\phi}$ on $L^{2}(\mathbb{R})$ is defined by
\[
(M_{\phi}f)(t)=\phi(t)f(t),
\]
for all
$f\in D(M_{\phi})
=
\{g\in L^{2}(\mathbb{R}) : \phi g\in L^{2}(\mathbb{R})\}.$
By Example 3.8 \cite{MR2953553}, the domain $D(M_{\phi})$ is dense in $L^{2}(\mathbb{R})$, and the adjoint of $M_{\phi}$ satisfies
$(M_{\phi})^{*}=M_{\overline{\phi}},$
where $\overline{\phi}$ denotes the complex conjugate of $\phi$.

Now consider the conjugation $C$ on $L^{2}(\mathbb{R})$ defined by
$C(h)=\overline{h}, \text{ for all } h\in L^{2}(\mathbb{R}).$
Then the antilinear operator $T=CM_{\phi}$ is antilinear self-adjoint. Indeed, for every
\[
f\in D(T^{\sharp})
=
D(M_{\overline{\phi}}C)
=
D(M_{\phi})
=
D(T),
\]
we have
$T^{\sharp}f
=
\overline{\phi f}
=
Tf.$
\end{example}
 \begin{example}
 Suppose that $a,b\in\mathbb{R}$ with $a<b$. Define the linear operator $S$ on $L^{2}(a,b)$ by
\[
Sf=-if',
\]
for all
$f\in D(S)=\{h\in AC[a,b]: h'\in L^{2}(a,b),\ h(a)=h(b)=0\},$
which is denoted by $H_{0}^{1}(a,b)$. Here, $AC[a,b]$ denotes the collection of all absolutely continuous complex-valued functions on $[a,b]$. Then the densely defined operator $S$ is closed and satisfies $S\subset S^{*}$, where the adjoint operator $S^{*}$ is given by
\[
S^{*}f=-if',
\]
with domain
$D(S^{*})=H^{1}(a,b)=\{h\in AC[a,b]: h'\in L^{2}(a,b)\},$
see Example 1.4 \cite{MR2953553}.

Now let $C$ be the conjugation on $L^{2}(\mathbb{R})$ defined by
$C(h)=\overline{h}, \text{ for all } h\in L^{2}(\mathbb{R}).$
Then the antilinear operator $T=CS$ is densely defined and closed. However, $T$ is not antilinear normal since
\[
D(T)=D(S)\subset D(S^{*})=D(S^{*}C)=D(T^{\sharp}).
\]

On the other hand, if the operator $S$ is defined on
\[
H^{1}(\mathbb{R})
=
\left\{
h\in L^{2}(\mathbb{R})
:
h\in AC[a,b]\ \text{for all}\ a,b\in\mathbb{R},
\ h'\in L^{2}(\mathbb{R})
\right\},
\]
then $S$ is self-adjoint; see Example 1.7 \cite{MR2953553}. Again, consider the conjugation $C$ on $L^{2}(\mathbb{R})$ given by
 $C(h)=\overline{h}, \text{ for all } h\in L^{2}(\mathbb{R}).$
In this case, the antilinear operator $T=CS$ is antilinear normal. Indeed, for every
\[
f\in D(T)=D(S)=D(S^{*})=D(S^{*}C)=D(T^{\sharp}),
\]
we have $\|Tf\|
=
\|Sf\|
=
\|f'\|
=
\|S^{*}Cf\|
=
\|T^{\sharp}f\|.$
 \end{example}
 \begin{example}
Define the linear operator
$S=-\frac{d^{2}}{dx^{2}}$
with domain
\[
D(S)=\{f\in H^{2}(\mathbb{R}) : f(0)=f'(0)=0\},
\]
where
$H^{2}(\mathbb{R})
=
\{f\in C^{1}(\mathbb{R}) : f'\in H^{1}(\mathbb{R})\},$
and $C^{1}(\mathbb{R})$ denotes the collection of all continuously differentiable complex-valued functions on $\mathbb{R}$. By Example 3.3 \cite{MR2953553}, the operator $S$ is symmetric and
\[
D(S^{*}) = H^{2}(-\infty,0)\oplus H^{2}(0,\infty).
\]

Now consider the conjugation $C$ on $L^{2}(\mathbb{R})$ defined by
$C(h)=\overline{h}, \text{ for all } h\in L^{2}(\mathbb{R}).$
Then the induced antilinear operator $T=CS$ is antilinear symmetric. Indeed, for every
\[
f\in D(T)=D(S)\subset D(S^{*})=D(S^{*}C)=D(T^{\sharp}),
\]
we have $Tf=-\overline{f''}=S^{*}Cf=T^{\sharp}f.$
\end{example}
 
The following Theorem \ref{thm 13} establishes the polar decomposition of a densely defined, closed, unbounded antilinear operator, and offers a characterization of antilinear normal operators on the Hilbert space $H$.
\begin{theorem}\label{thm 13}
Let $T$ be a densely defined closed antilinear operator from $H$ to $K$. Then
\[
T = U_T \, |T|,
\]
where $U_T$ is a partial antilinear isometry with initial space $\overline{R(|T|)}$ and final space $\overline{R(T)}$. 
\end{theorem}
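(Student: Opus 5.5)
The plan is to reduce to the classical polar decomposition of densely defined closed linear operators, using the factorization device already used throughout this section. Fix a conjugation $C$ on $K$ and write $T = CT_{1}$, where $T_{1} = CT$ is a densely defined closed linear operator from $H$ to $K$ (Remark \ref{remark 5}). The first step is to identify $|T|$. I define it as $|T| = (T^{\sharp}T)^{1/2}$. By Lemma \ref{lemma 1} we have $T^{\sharp} = T_{1}^{*}C$, so
\[
T^{\sharp}T = T_{1}^{*}CCT_{1} = T_{1}^{*}T_{1}.
\]
By Lemma \ref{lemma 6}(1) this is a positive self-adjoint linear operator. Hence $|T| = |T_{1}|$ is a positive self-adjoint linear operator with $D(|T|) = D(T_{1}) = D(T)$.

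Next I apply the linear polar decomposition (e.g. Theorem 7.20 in \cite{MR2953553}) to $T_{1}$. This gives $T_{1} = V|T_{1}|$, where $V$ is a linear partial isometry with initial space $\overline{R(|T_{1}|)} = N(T_{1})^{\perp}$ and final space $\overline{R(T_{1})}$. I then set $U_{T} = CV$, which is antilinear and bounded. For $x \in D(T)$ we get
\[
T x = CT_{1}x = CV|T_{1}|x = U_{T}|T|x .
\]
The domains match because $D(|T_{1}|) = D(T_{1})$.

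It remains to check the partial isometry properties. Since $C$ is isometric, $\|U_{T}x\| = \|Vx\|$. Thus $U_{T}$ is isometric on $\overline{R(|T|)} = \overline{R(|T_{1}|)}$ and vanishes on its orthogonal complement $N(V)$. So the initial space is $\overline{R(|T|)}$. For the final space, $R(U_{T}) = C\,R(V) = C\,\overline{R(T_{1})}$. Because $C$ is an isometric bijection, it maps closures to closures, so $C\,\overline{R(T_{1})} = \overline{C R(T_{1})} = \overline{R(T)}$. One may also record the equivalent formulations $U_{T}^{\sharp}U_{T} = V^{*}V = P_{\overline{R(|T|)}}$ and $U_{T}U_{T}^{\sharp} = CVV^{*}C = P_{\overline{R(T)}}$. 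The second identity uses the fact that $C P_{M} C$ is the orthogonal projection onto $CM$, which one checks directly from $\langle Cx, Cy\rangle = \langle y, x\rangle$.

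The main obstacle is not the construction but making the notion of $|T|$ and "partial antilinear isometry" consistent. One must confirm that $(T^{\sharp}T)^{1/2}$ is the same operator as the linear $|T_{1}|$, which Lemma \ref{lemma 1} and Lemma \ref{lemma 6} handle. One must also verify that $U_{T}$ does not depend on the choice of $C$. If uniqueness is desired, I would argue as follows. Any antilinear partial isometry $W$ with the stated initial space and $T = W|T|$ is determined on $R(|T|)$ by $W|T|x = Tx$. It is determined on the closure by continuity, and it is zero on $R(|T|)^{\perp} = N(T)$. Hence $W = U_{T}$.
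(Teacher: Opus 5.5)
Your proposal is correct and follows essentially the same route as the paper: write $T = CT_{1}$, identify $|T| = |T_{1}|$ via $T^{\sharp}T = T_{1}^{*}T_{1}$, apply the linear polar decomposition to $T_{1}$, set $U_{T} = CU_{T_{1}}$, and use that $C$ is an isometric bijection to check the initial and final spaces. Your extra remarks, namely the intrinsic definition $|T| = (T^{\sharp}T)^{1/2}$ and the uniqueness argument showing that $U_{T}$ does not depend on the choice of $C$, are valid additions that the paper's proof does not spell out.
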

\begin{proof}
We write $T = CT_{1},$ where $T_{1} = CT$ is also a densely defined closed operator from $H$ to $K$ and $C$ is a conjugation on $K.$ So, $T^{\sharp}T = T_{1}^{*}T_{1}$ is a positive self-adjoint operator on $H.$ We define $\vert T \vert$ as $\vert T_{1} \vert$ and the polar decomposition of a densely defined closed operator, we get $T_{1} = U_{T_{1}} \vert T \vert,$ where $U_{T_{1}}$ is a partial isometry with initial space $\overline{R(\vert T \vert)}$ and final space $\overline{R(T_{1})}.$\\
So, $T = CU_{T_{1}}\vert T \vert = U_{T} \vert T \vert,$ where $U_{T} = CU_{T_{1}}.$ Thus, $\|U_{T} x\| = \|x\|,$ for all $x \in \overline{R(\vert T \vert)}.$ It follows that $$N(U_{T}) = N(CU_{T_{1}}) = N(U_{T_{1}}) = N(\vert T \vert).$$ Let $u \in \overline{R(T)}.$ Then $Cu \in \overline{R(CT)} = \overline{R(T_{1})}.$ There exists an element $v \in \overline{R(\vert T \vert)}$ such that $Cu = U_{T_{1}}v$ which implies that $u = U_{T}v.$ Now, it is evident that $U_{T}$ is a partial antilinear isometry with initial space $\overline{R(\vert T \vert)}$ and final space $\overline{R(T)}.$
\end{proof}
\begin{corollary}\label{cor 14}
Let $T$  be an antilinear normal operator on $H.$ Then
\[
U_T \, |T| = |T| \, U_T.
\]
\end{corollary}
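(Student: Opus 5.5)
The plan is to reduce everything to the linear polar decomposition, as in the proof of Theorem \ref{thm 13}. There are four steps: identify $|T|$ with the square root of $TT^{\sharp}$, compute that square root as $U_T|T|U_T^{\sharp}$, multiply on the right by $U_T$, and then check domains.

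\emph{Setup.} Write $T=CT_{1}$ with $T_{1}=CT$ a densely defined closed linear operator on $H$. Then $U_T=CU_{T_1}$ and $|T|=|T_1|=(T^{\sharp}T)^{1/2}=(T_1^{*}T_1)^{1/2}$, with $U_{T_1}$ the partial isometry of the linear polar decomposition $T_1=U_{T_1}|T_1|$.

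\emph{Step 1: the left modulus.} By Lemma \ref{lemma 1}, $T^{\sharp}=T_1^{*}C$, so
\[
TT^{\sharp}=CT_1T_1^{*}C=C|T_1^{*}|^{2}C .
\]
Antilinear normality gives $T^{\sharp}T=TT^{\sharp}$, hence
\[
|T_1|^{2}=C|T_1^{*}|^{2}C .
\]
The argument in the proof of Theorem \ref{thm 7} shows that $C|T_1^{*}|C$ is a positive self-adjoint operator whose square is $C|T_1^{*}|^{2}C$. Uniqueness of the positive self-adjoint square root of the positive self-adjoint operator $T^{\sharp}T$ (Lemma \ref{lemma 6}) then yields
\[
|T|=C|T_1^{*}|C .
\]

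\emph{Step 2: the linear identity.} For the linear polar decomposition, the standard identities are
\[
|T_1^{*}|=U_{T_1}|T_1|U_{T_1}^{*},\qquad U_{T_1}^{*}U_{T_1}=P,
\]
where $P$ is the orthogonal projection onto $\overline{R(|T_1|)}=N(|T_1|)^{\perp}$. Also, $P$ commutes with $|T_1|$ in the sense $P|T_1|\subset |T_1|P$, and $|T_1|P=|T_1|$ on $D(|T_1|)$. Combining this with Step 1 gives
\[
|T|=CU_{T_1}|T_1|U_{T_1}^{*}C=U_T|T|U_T^{\sharp},
\]
since $U_T^{\sharp}=U_{T_1}^{*}C$ by the Remark following Lemma \ref{lemma 1}.

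\emph{Step 3: the commutation.} Multiplying on the right by $U_T$ formally gives
\[
|T|U_T=U_T|T|U_T^{\sharp}U_T=U_T|T|U_{T_1}^{*}U_{T_1}=U_T|T|P=U_T|T| .
\]

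\emph{Step 4: domains (main obstacle).} The delicate part is that Step 3 is only formal until the domains of $|T|U_T$ and $U_T|T|$ are shown to agree. I would check both inclusions.

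For $\supseteq$: if $x\in D(|T|)$, then $Px\in D(|T|)$, so $x$ lies in the domain of $U_T|T|U_T^{\sharp}U_T$, which equals the domain of $|T|U_T$ by Step 2.

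For $\subseteq$: if $U_Tx\in D(|T|)=D(U_T|T|U_T^{\sharp})$, then $U_T^{\sharp}U_Tx=Px\in D(|T|)$. Since $(I-P)x\in N(|T|)\subset D(|T|)$, it follows that $x\in D(|T|)$.

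Once the domains coincide, the algebra of Step 3 applies pointwise and gives $U_T|T|=|T|U_T$. If any subtlety remains, I would handle it by conjugating the known linear statement $|T_1^{*}|=U_{T_1}|T_1|U_{T_1}^{*}$, including its domains, through $C$.
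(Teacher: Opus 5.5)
Your proof is correct and follows essentially the paper's route. Normality gives $|T|=|T^{\sharp}|$ (your $C|T_{1}^{*}|C$), this operator is identified with $U_{T}|T|(U_{T})^{\sharp}$, and $(U_{T})^{\sharp}U_{T}=P_{\overline{R(|T|)}}$ then gives the commutation. The differences are minor: you import the linear identity $|T_{1}^{*}|=U_{T_{1}}|T_{1}|U_{T_{1}}^{*}$ through $C$ rather than re-proving that $U_{T}|T|(U_{T})^{\sharp}$ is self-adjoint; you multiply directly on the right by $U_{T}$ instead of detouring through $U_{T^{\sharp}}=(U_{T})^{\sharp}$; and you verify the domains explicitly, which the paper leaves implicit.
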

\begin{proof}
By the definition of antilinear normal operator $T,$ we have $TT^{\sharp} = T^{\sharp}T$ is a positive self-adjoint operator on $H.$ So, $\vert T \vert = (T^{\sharp}T)^{\frac{1}{2}} = (TT^{\sharp})^{\frac{1}{2}} = \vert T^{\sharp}\vert,$ where $\vert T^{\sharp}\vert = \vert CT^{\sharp}\vert$ and $C$ is a conjugation on $H.$ Now, we claim that $(U_{T})^{\sharp} = U_{T^{\sharp}}.$\\
By taking adjoint in the both sides of the identity $U_{T} = CU_{T_{1}},$ and $CT = T_{1} = U_{T_{1}} \vert T\vert,$ we get $(U_{T})^{\sharp} = U_{T_{1}^{*}}C.$ Again,
\begin{align}\label{equ 10}
(U_{T})^{\sharp}U_{T} = U_{T_{1}^{*}}U_{T_{1}} = P_{\overline{R(\vert T \vert)}} = P_{\overline{R(T^{\sharp})}}.
\end{align}
By the identity (\ref{equ 10}), we obtain that 
\begin{align}\label{equ 11}
(U_{T})^{\sharp} T= (U_{T})^{\sharp} U_{T}\vert T \vert = \vert T \vert,
\end{align}
and 
\begin{align}\label{equ 12}
(U_{T})^{\sharp}T(U_{T})^{\sharp} = \vert T \vert (U_{T})^{\sharp} = (U_{T} \vert T \vert)^{\sharp} = T^{\sharp}.
\end{align}
Hence, $TT^{\sharp} = U_{T} \vert T\vert \vert T \vert (U_{T})^{\sharp} = (U_{T}\vert T \vert (U_{T})^{\sharp} )^{2}.$   The positive operator $U_{T}\vert T \vert (U_{T})^{\sharp}$ is self-adjoint because $(U_{T}\vert T \vert (U_{T})^{\sharp})^{*} = (\vert T\vert (U_{T})^{\sharp})^{\sharp} (U_{T})^{\sharp} = T(U_{T})^{\sharp} = U_{T}\vert T \vert (U_{T})^{\sharp}.$ It shows that 
\begin{align}\label{equ 13}
\vert T^{\sharp}\vert = U_{T}\vert T \vert (U_{T})^{\sharp}.
\end{align}
For the relations (\ref{equ 13}) and (\ref{equ 10}), we conclude that 
\begin{align}\label{equ 14}
(U_{T})^{\sharp}\vert T^{\sharp}\vert = \vert T \vert (U_{T})^{\sharp}.
\end{align}
The identity (\ref{equ 14}) and the polar decomposition on $T^{\sharp}$ confirm that $$U_{T^{\sharp}}\vert T^{\sharp} \vert = T^{\sharp} = (U_{T}\vert T \vert)^{\sharp} = \vert T \vert (U_{T})^{\sharp} = (U_{T})^{\sharp} \vert T^{\sharp}\vert.$$
Then $U_{T^{\sharp}} = (U_{T})^{\sharp}$ on $\overline{R(\vert T^{\sharp}\vert)}.$ Let us consider an element $p \in (\overline{R(\vert T^{\sharp}\vert)})^{\perp} = N(T^{\sharp}).$ Then $U_{T^{\sharp}}p = 0,$ and $\|(U_{T})^{\sharp}p\|^{2} = \langle U_{T}(U_{T})^{\sharp}p, p\rangle = \langle CU_{T_{1}}U_{T_{1}^{*}}Cp, p\rangle = 0.$ It proves that $(U_{T})^{\sharp} = U_{T^{\sharp}}.$\\ Since $T$ is antilinear normal.  By the identities (\ref{equ 14}),(\ref{equ 10}) and $U_{T^{\sharp}} = (U_{T})^{\sharp}$, we establish that 
$$U_{T} \vert T \vert = U_{T} (U_{T})^{\sharp}\vert T^{\sharp} \vert U_{T} = \vert T^{\sharp}\vert U_{T} = \vert T \vert U_{T}.$$
\end{proof}
\begin{theorem}\label{thm 15}
Let $T$ be a densely defined closed antilinear operator on $H$  and $C$ be a conjugation on $H.$ Then $T$ is antilinear normal if and only if $C\vert T\vert C = \vert T^{\sharp}C\vert.$
\end{theorem}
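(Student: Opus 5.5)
The plan is to reduce the statement, by conjugating with $C$, to an equality of positive self-adjoint operators, and then to use uniqueness of positive square roots. Write $T=CT_{1}$ with $T_{1}=CT$ a densely defined closed linear operator, as in Remark \ref{remark 5}. By Theorem \ref{thm 13}, $|T|=|T_{1}|=(T^{\sharp}T)^{1/2}$, where $T^{\sharp}T=T_{1}^{*}T_{1}$ is positive self-adjoint by Lemma \ref{lemma 6}. The operator $T^{\sharp}C$ is a product of two antilinear maps, so it is linear. It is densely defined, since $D(T^{\sharp}C)=C\,D(T^{\sharp})$ and $C$ is an isometric bijection. It is closed, since $C$ is bounded and $T^{\sharp}$ is closed by Lemma \ref{lemma 2}. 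Hence $|T^{\sharp}C|=((T^{\sharp}C)^{*}(T^{\sharp}C))^{1/2}$ makes sense.

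First I compute $(T^{\sharp}C)^{*}$. By the Remark after Lemma \ref{lemma 1} (adjoint of a product with a bounded factor), $(T^{\sharp}C)^{*}=C^{\sharp}(T^{\sharp})^{\sharp}$. Using $C^{\sharp}=C$ and $(T^{\sharp})^{\sharp}=T$ from Remark \ref{remark 5}, this gives $(T^{\sharp}C)^{*}=CT$. Therefore
\[
|T^{\sharp}C|^{2}=(T^{\sharp}C)^{*}(T^{\sharp}C)=C\,TT^{\sharp}\,C .
\]
If one prefers to avoid the Remark, one can instead compute directly $T^{\sharp}C=T_{1}^{*}CC=T_{1}^{*}$ by Lemma \ref{lemma 1}. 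Then $|T^{\sharp}C|=|T_{1}^{*}|=(T_{1}T_{1}^{*})^{1/2}$, and $T_{1}T_{1}^{*}=CTT^{\sharp}C$ because $TT^{\sharp}=CT_{1}T_{1}^{*}C$.

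Next I show that $C|T|C$ is a positive self-adjoint operator. The argument repeats the one in the proof of Theorem \ref{thm 7} showing that $C|S^{*}|C$ is self-adjoint: conjugating by $C$ preserves domains up to the bijection $C$, preserves adjoints (via $\langle Cx,Cy\rangle=\langle y,x\rangle$), and preserves positivity. Moreover $(C|T|C)^{2}=C|T|^{2}C=C\,T^{\sharp}T\,C$ with equality of domains, because $C^{2}=I$.

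Now suppose $T$ is antilinear normal, so $T^{\sharp}T=TT^{\sharp}$. Then $C|T|C$ is a positive self-adjoint square root of $CTT^{\sharp}C=|T^{\sharp}C|^{2}$. By uniqueness of the positive self-adjoint square root, $C|T|C=|T^{\sharp}C|$. Conversely, if $C|T|C=|T^{\sharp}C|$, squaring both sides gives $CT^{\sharp}TC=CTT^{\sharp}C$, domains included. Multiplying by $C$ on both sides and using $C^{2}=I$ yields $T^{\sharp}T=TT^{\sharp}$, so $T$ is antilinear normal by Definition \ref{def 10}.

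The main obstacle I expect is the careful bookkeeping of domains. The identities $(C|T|C)^{2}=C|T|^{2}C$ and "$CAC=CBC\Rightarrow A=B$" must hold as equalities of unbounded operators, including domains. This follows from $D(CAC)=C\,D(A)$ together with $C^{2}=I$, but each such step must be stated explicitly, and the self-adjointness of $C|T|C$ must be verified rather than assumed.
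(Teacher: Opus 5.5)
Your proof is correct and follows essentially the same route as the paper. Both use $|T|=|T_{1}|$ and $T^{\sharp}C=T_{1}^{*}CC=T_{1}^{*}$ to reduce the claim to $C|T_{1}|C=|T_{1}^{*}|$, check directly that $C|T_{1}|C$ is positive self-adjoint with $(C|T_{1}|C)^{2}=C|T_{1}|^{2}C=T_{1}T_{1}^{*}$ under normality, conclude by uniqueness of the positive square root, and get the converse by squaring.

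One small point: the Remark after Lemma \ref{lemma 1} is stated with the bounded factor on the left, so it does not directly give $(T^{\sharp}C)^{*}=C^{\sharp}(T^{\sharp})^{\sharp}$. Your fallback $T^{\sharp}C=T_{1}^{*}$, via Lemma \ref{lemma 1}, is the cleaner justification, and it is exactly what the paper uses implicitly.
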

\begin{proof}
We express $T = CT_{1},$ where $T_{1}= CT$ is a densely defined closed operator on $H.$ In order to verify the identity $C\vert T\vert C = \vert T^{\sharp}C\vert,$ it is enough to show that $$C\vert T_{1}\vert C = \vert T_{1}^{*} \vert.$$
Since, $T$ is antilinear normal, then $\vert T_{1}\vert^{2} = T_{1}^{*}T_{1} = CT_{1}T_{1}^{*}C = C\vert T_{1}^{*} \vert^{2} C.$ Again, $(C\vert T_{1} \vert C)^{2} = C\vert T_{1} \vert^{2}C.$ Now we claim that the positive operator $C \vert T_{1} \vert C$ is self-adjoint.\\
It is evident that $D(\vert T_{1}^{*}\vert^{2}) = D(C\vert T_{1}\vert^{2} C) \subset D(\vert T_{1} \vert^{2}C) \subset D(\vert T_{1} \vert C) = D(C \vert T_{1} \vert C).$ So, $(C\vert T_{1}\vert C)^{*}$ exits. Let $y \in D((C\vert T_{1}\vert C)^{*}).$ Then for all $x \in D(\vert T_{1}\vert C),$ we get
\begin{align}\label{equ 15}
\langle x, (C\vert T_{1}\vert C)^{*}y \rangle = \langle C\vert T_{1}\vert Cx, y \rangle =  \overline{\langle \vert T_{1} \vert Cx, Cy\rangle}.
\end{align}
Then the linear functional $f(z) = \langle \vert T_{1} \vert z, Cy\rangle,$ for all $z \in D(\vert T_{1} \vert)$ is bounded because $z = Cw,$ for some $w \in H.$ Thus, $Cy \in D(\vert T_{1} \vert).$ By the identity (\ref{equ 15}), we obtain $\langle x, (C\vert T_{1}\vert C)^{*}y \rangle = \langle x, C\vert T_{1} \vert Cy\rangle,$ for all $x \in D(\vert T_{1}\vert C).$ Hence, $(C\vert T_{1}\vert C)^{*} \subset (C\vert T_{1}\vert C).$\\
Let $u \in D(C \vert T_{1}\vert C).$ Then for all $v \in D(C\vert T_{1}\vert C),$ $g(v) = \langle C\vert T_{1}\vert Cv, u\rangle = \langle v, C\vert T_{1}\vert Cu \rangle$ is bounded linear functional. It follows that $u \in D((C\vert T_{1}\vert C)^{*}).$ It means $(C\vert T_{1}\vert C)$ is self-adjoint. Therefore, $\vert T^{\sharp}C\vert = \vert T_{1}^{*}\vert = C\vert T_{1}\vert C = C\vert T \vert C.$\\
To prove the converse, consider $C\vert T_{1}\vert C = \vert T_{1}^{*}\vert.$ Then, $$T^{\sharp}T= (CT_{1})^{\sharp}(CT_{1}) = \vert T_{1}\vert^{2} = CT_{1}T_{1}^{*}C=TT^{\sharp}.$$
\end{proof}
\begin{remark}\label{remark 16}
It is an immediate consequence of Theorem \ref{thm 12} that $T^{n}(T^{\sharp})^{n} = (T^{\sharp})^{n}T^{n},$ where $n \in \mathbb{N},$ for every antilinear normal operator $T$ on $H$. We also notice that two antilinear normal operators $T_{1}$ and $T_{2}$ in the Hilbert space $H$ with $T_{1} \subset T_{2}$ implies that $T_{1} = T_{2}.$ 
\end{remark}

\begin{definition}\label{def 17}
An antilinear operator $T$ is called an antilinear subnormal operator on $H$ if it admits an antilinear normal extension $N$ on $K$ in the sense that there exists a Hilbert space $K$ containing $H$ as a closed subspace in $K$ such that $Nh = Th,$ for all $h \in D(T).$\\
A antilinear normal extension $N$ on $K$ of an antilinear subnormal operator $T$ on $H$ is a minimal antilinear normal extension if for any antilinear normal extension $N_{1}$ on $K_{1}$ of $T$ on $H$ with $T \subset N_{1} \subset N$ and $K_{1}$ reduces under $N$ implies $K_{1} = K$ and $N_{1} = N.$
\end{definition}
Some significant results concerning unbounded subnormal linear operators on Hilbert spaces were established in \cite{stochel1989normal}. In particular, Theorem~3 of \cite{stochel1989normal} provides necessary and sufficient conditions for a densely defined linear operator $T$ on $H$, under the assumption that $T(D(T)) \subset D(T)$, to be subnormal. 

Furthermore, in \cite{stochel3}, the authors proved the existence of a minimal normal extension for a densely defined linear subnormal operator $T$ on $H$. However, their approach does not directly extend to the case of unbounded antilinear subnormal operators on complex Hilbert spaces. 

The following theorem establishes necessary and sufficient conditions for the existence of a minimal antilinear normal extension of an antilinear subnormal operator $T$ on $H$.
\begin{theorem}\label{thm 18}
Let $T$ be an antilinear subnormal operator on $H$, and let $N$ be an antilinear normal extension of $T$ acting on a Hilbert space $K \supset H$. Then $N$ is a minimal antilinear normal extension of $T$ if and only if the linear span
\[
G = \operatorname{span}\bigl\{ (N^{\sharp})^{j} N^{i} x : i,j \in \mathbb{N} \cup \{0\},\; x \in D^{\infty}(T) \bigr\}
\]
is dense in $K$, where
\[
D^{\infty}(T) = \bigcap_{n=1}^{\infty} D(T^{n}).
\]
\end{theorem}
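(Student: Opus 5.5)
We will prove both implications by comparing $N$ with its restriction to the closed subspace $K_{1}:=\overline{G}$. The basic tool is reduction of an unbounded antilinear normal operator by a closed subspace. Let $P$ be the orthogonal projection onto $K_{1}$. Since $N$ is closed and $D(N)=D(N^{\sharp})$ by Theorem \ref{thm 12}, we take ``$K_{1}$ reduces $N$'' to mean $PN\subset NP$. This is meaningful for antilinear $N$ because $P$ is linear and bounded. If $K_{1}$ reduces $N$, then $N_{1}:=N_{\vert D(N)\cap K_{1}}$ is a closed antilinear operator in $K_{1}$ with $N_{1}^{\sharp}=N^{\sharp}_{\vert D(N^{\sharp})\cap K_{1}}$. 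Hence $\|N_{1}x\|=\|N_{1}^{\sharp}x\|$ on the common domain, and $N_{1}$ is antilinear normal by Theorem \ref{thm 12}. The adjoint computation is done exactly as in Lemma \ref{lemma 1}, writing $N=CN_{0}$ with a conjugation $C$ on $K$.

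\emph{Step 1: $K_{1}=\overline{G}$ reduces $N$.} By construction $G\subset D^{\infty}(N)\cap D^{\infty}(N^{\sharp})$. To see this, iterate Remark \ref{remark 16} and Theorem \ref{thm 12}: $x\in D^{\infty}(T)$ gives $N^{i}x=T^{i}x\in D(N)=D(N^{\sharp})$, and the norm identity $\|N^{\sharp}y\|=\|Ny\|$ propagates membership through the words $(N^{\sharp})^{j}N^{i}x$. Also $NG\subset G$ and $N^{\sharp}G\subset G$, the second being clear and the first requiring the commutation $N(N^{\sharp})^{j}=(N^{\sharp})^{j}N$ on suitable vectors, which is normality. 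The main obstacle is passing from this algebraic invariance on the dense set $G$ to the genuine reduction $PN\subset NP$ for the unbounded operator. I would first try the route through the bounded transforms. Write $N=U_{N}|N|$ by Theorem \ref{thm 13}, with $U_{N}|N|=|N|U_{N}$ by Corollary \ref{cor 14}. Show that $G$ is invariant under $(I+|N|^{2})^{-1}=(I+N^{\sharp}N)^{-1}$, or at least that its closure is, by approximating with the polynomial words in $G$ and using a spectral-measure argument for the positive self-adjoint operator $N^{\sharp}N$ (Lemma \ref{lemma 6}). Then deduce that $P$ commutes with $|N|$ and with $U_{N}$. If this fails, I would instead prove directly that $K_{1}^{\perp}$ is invariant under $N$ and $N^{\sharp}$ on their domains using $\langle Ny,g\rangle=\overline{\langle y,N^{\sharp}g\rangle}$ for $g\in G$. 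Together with the invariance of $K_{1}$, this gives the reduction, provided one also checks $P\,D(N)\subset D(N)$ via closedness.

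\emph{Step 2: the equivalence.} Suppose first that $N$ is minimal. By Step 1, $N_{1}=N_{\vert K_{1}}$ is an antilinear normal extension of $T$ on $K_{1}\supset H$; here $H\subset K_{1}$ requires $D^{\infty}(T)$ to be dense in $H$, an assumption used implicitly in the statement. Since $T\subset N_{1}\subset N$ and $K_{1}$ reduces $N$, minimality forces $K_{1}=K$, so $G$ is dense. Conversely, suppose $G$ is dense and let $N_{1}$ on $K_{2}$ be an antilinear normal extension with $T\subset N_{1}\subset N$ and $K_{2}$ reducing $N$. Then $N_{1}\subset N_{\vert K_{2}}$, and both are antilinear normal in $K_{2}$, so $N_{1}=N_{\vert K_{2}}$ by Remark \ref{remark 16}. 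Because $N_{1}$ and $N_{1}^{\sharp}=N^{\sharp}_{\vert K_{2}}$ map $K_{2}$ into itself and $D^{\infty}(T)\subset K_{2}$, every word $(N^{\sharp})^{j}N^{i}x$ lies in $K_{2}$. Hence $G\subset K_{2}$, and the density of $G$ gives $K_{2}=K$ and $N_{1}=N$. Thus the heavy lifting is Step 1, and both directions then follow formally.
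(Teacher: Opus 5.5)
Your overall architecture is the paper's: both directions rest on the claim that $\overline{G}$ reduces $N$. Your ``if'' direction matches the paper's converse argument and is fine: the restriction to a reducing $K_2$ is antilinear normal, Remark~\ref{remark 16} gives $N_1=N_{\vert K_2}$, all words $(N^{\sharp})^{j}N^{i}x$ stay in $K_2$, and density forces $K_2=K$.

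The gap is Step~1, which you never prove. You call the passage from $NG\subset G$, $N^{\sharp}G\subset G$ to a genuine reduction $PN\subset NP$ ``the main obstacle'' and list two strategies without carrying either out. Your fallback strategy is exactly what the paper does. It shows that $N$ and $N^{\sharp}$ map $D(N)\cap G^{\perp}$ into $G^{\perp}$, deduces $N(D(N)\cap\overline{G})\perp D(N)\cap G^{\perp}$, and then writes $(D(N)\cap G^{\perp})^{\perp}=\overline{G}$ ``since $D(N)$ is dense''. That equality does not follow, since a dense subspace can meet a closed subspace in a non-dense set. Moreover, two-sided invariance with dense intersections still does not give $P\,D(N)\subset D(N)$, and closedness does not supply it either. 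For instance, $-\frac{d^{2}}{dx^{2}}$ on $H^{2}(\mathbb{R})$ maps the dense pieces $H^{2}(\mathbb{R})\cap L^{2}(0,\infty)$ and $H^{2}(\mathbb{R})\cap L^{2}(-\infty,0)$ into $L^{2}(0,\infty)$ and $L^{2}(-\infty,0)$ respectively. Yet $L^{2}(0,\infty)$ does not reduce it, because $\chi_{(0,\infty)}f\notin H^{2}(\mathbb{R})$ whenever $f(0)\neq 0$.

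In fact Step~1, and with it the ``only if'' half of the statement, fails for unbounded operators, so neither strategy can be completed. Take an indeterminate measure $\mu$ on $(0,\infty)$ for which the polynomials $\mathcal{P}$ are not dense in $K=L^{2}(\mu)$. The log-normal distribution works: it is absolutely continuous, hence not N-extremal, so non-density follows from M.~Riesz's theorem. Let $Cf=\overline{f}$, let $M$ be multiplication by $x$, and set $N=CM$. As in the paper's multiplication example, $N=N^{\sharp}$ is antilinear normal, and $N\mathcal{P}\subset\mathcal{P}$. Put $H=\overline{\mathcal{P}}\neq K$ and $T=\overline{N_{\vert\mathcal{P}}}$, an antilinear operator in $H$ with $T\subset N$. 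Then $\mathcal{P}\subset D^{\infty}(T)$ is dense in $H$, so the extra density assumption you correctly flagged (and which the paper uses silently) holds. But for $x\in D^{\infty}(T)$ we have $(N^{\sharp})^{j}N^{i}x=T^{i+j}x\in H$, so $\overline{G}\subset H\neq K$.

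On the other hand, suppose a closed $K_{1}\supset H$ reduces $N$. Then $PN\subset NP$ gives $PN^{\sharp}\subset N^{\sharp}P$, so $P$ commutes with $N^{\sharp}N=M^{2}$. Since $x\mapsto x^{2}$ is injective on $(0,\infty)$, $P$ commutes with every multiplication by $\chi_{\Delta}$. By maximal abelianness of $L^{\infty}(\mu)$, $P$ is multiplication by some $\chi_{\Delta_{0}}$, and $1\in H$ forces $P=I$. Remark~\ref{remark 16} then gives $N_{1}=N$. So $N$ is minimal in the sense of Definition~\ref{def 17} while $G$ is not dense.

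Your Step~1 does go through when $N$ is bounded: invariance of $G$ passes by continuity to $\overline{G}$, and via the adjoint to $\overline{G}^{\perp}$. For unbounded $N$, however, density of $G$ is a strictly stronger, cyclic-type minimality, and only the ``if'' direction can be proved.
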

\begin{proof}
First it is evident to the existence of $(N^{\sharp})^{j}N^{i}x,$ for all $x \in D^{\infty}(T)$ because $x \in D^{\infty}(T) \subset D(T^{j + i})$ implies $N^{i}x = T^{i}x \in D(T^{j}) \subset D(N^{j}) = D(N^{\sharp})^{j}.$\\
Suppose $G$ is not dense in $K.$ Then $\overline{G} + G^{\perp} = K,$ where $G^{\perp} \neq \{0\}.$\\
Again, It shows from the definition of $G$ that $N(G) \subset G,$ and $N^{\sharp}(G) \subset G.$ Let $u \in D(N) \cap G^{\perp}.$ Then for all $x \in G \subset D(N^{\sharp}) = D(N),$ we have $\langle Nu, x \rangle = \langle N^{\sharp}x, u \rangle = 0.$ So, $N(D(N) \cap G^{\perp}) \subset G^{\perp}.$ Similarly, we can show the identity
\begin{align}\label{equ 16}
N^{\sharp}(D(N) \cap G^{\perp}) \subset G^{\perp}.
\end{align}
Now we claim the relation $N(D(N) \cap \overline{G}) \subset \overline{G}.$ For fixed $z \in D(N) \cap \overline{G},$ and for all $y \in D(N) \cap G^{\perp},$  we get from the identity (\ref{equ 16}) that $$\langle Nz, y \rangle = \langle N^{\sharp}y, z \rangle = 0.$$ Thus, $N(D(N) \cap \overline{G}) \subset (D(N) \cap G^{\perp})^{\perp} = \overline{G},$ since $D(N)$ is dense in $K.$ This shows that $\overline{G}$ is a reduced subspace for $N.$ Hence, $N_{\vert{\overline{G}}}$ is also an antilinear normal extension of $T$ and $N_{\vert{\overline{G}}} \subset N.$ But $N$ is a minimal antilinear normal extension of $T$ which confirms that our assumption is wrong. Therefore, $G$ is dense in $K.$\\
To prove the converse, assume that $G$ is dense in $K.$ Suppose $N$ is not a minimal extension. Then there is an antilinear normal operator $N_{1}$ on $K_{1}$ such that $T \subset N_{1} \subset N$ and $D(T) \subset K_{1} \subset K$ with $K_{1}$ is a reduced space for $N.$\\
Again, $N_{1} \subset N_{\vert{K_{1}}}$ confirms that $D(N_{\vert{K_{1}}})^{\sharp} \subset D(N_{1})^{\sharp} =  D(N_{1}) \subset D(N_{\vert{K_{1}}}).$ Let $z \in D(N_{\vert{K_{1}}}).$ Then for all $x \in D(N_{\vert{K_{1}}}),$ the linear functional $$f(x) = \overline{\langle N_{\vert{K_{1}}}x, z \rangle} = \overline{\langle N^{\sharp}z, x \rangle}$$ is bounded and $(N_{\vert{K_{1}}})^{\sharp}z = N^{\sharp}z.$ So, $D(N_{\vert{K_{1}}})^{\sharp} = D(N_{\vert{K_{1}}}).$ This shows that $\| (N_{\vert{K_{1}}})^{\sharp}w\| = \|Nw\| = \|N_{\vert{K_{1}}}w\|,$ for all $w \in D(N_{\vert{K_{1}}})^{\sharp}.$ Hence, $N_{\vert{K_{1}}}$ is antilinear normal on $K_{1}.$ By Remark \ref{remark 16}, we get $N_{1} = N_{\vert{K_{1}}}.$ We also obtain that $$(N^{\sharp})^{j}N^{i}u= ((N_{\vert{K_{1}}})^{\sharp})^{j}(N_{\vert{K_{1}}})^{i}u,$$ for all $u \in D^{\infty}(T).$ Therefore, $G$ is in $K_{1}.$ Since, $G$ is dense in $K$ which establishes that $K_{1} = K$ and $N_{1} = N_{\vert{K_{1}}} = N.$ Therefore, $N$ is a minimal antilinear normal extension of $T.$
\end{proof}
The classical Toeplitz--Hausdorff theorem asserts that the numerical range of a linear operator $T$ on a Hilbert space $H$, defined by
\[
W(T) = \{ \langle Tx, x \rangle : x \in D(T),\ \|x\| = 1 \},
\]
is a convex subset of $\mathbb{C}$. 

Recently, in \cite{Vmuller}, the authors established the convexity of the numerical range for bounded antilinear operators on complex Hilbert spaces, provided that the dimension of the domain space is greater than one. It is worth noting that convexity fails when one considers a conjugation operator on a one-dimensional Hilbert space.
The following theorem establishes the convexity of the numerical range of an antilinear operator $T$ on $H$ whenever $\dim H > 1$.
\begin{theorem}\label{thm 19}
    Let $T$ be an antilinear operator on $H$ with $\dim{H} > 1.$ Then $W(T)$ is convex.
\end{theorem}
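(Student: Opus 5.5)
The plan is to reduce convexity to a statement about two unit vectors and then treat the one-dimensional subspace they span separately from the genuinely two-dimensional case. Let $\lambda_1=\langle Tx_1,x_1\rangle$ and $\lambda_2=\langle Tx_2,x_2\rangle$ with $x_1,x_2\in D(T)$ unit vectors. It suffices to show that the segment $[\lambda_1,\lambda_2]$ lies in $W(T)$. Every point of this segment will be produced by some unit vector in $M=\operatorname{span}\{x_1,x_2\}\cap D(T)$, or by a unit vector in a two-dimensional subspace of $D(T)$ that contains $x_1$ and $x_2$.

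Since $D(T)$ is a linear subspace, $M\subset D(T)$ is finite dimensional. Let $P$ be the orthogonal projection onto a two-dimensional subspace $L\subset D(T)$ containing $x_1,x_2$. If $x_1,x_2$ are dependent, enlarge $\operatorname{span}\{x_1\}$ by any unit vector of $D(T)$ orthogonal to $x_1$; this needs $\dim D(T)>1$, which I take as implicit from $\dim H>1$ together with density of the domain. For $x\in L$ we have $\langle Tx,x\rangle=\langle PTx,x\rangle$. Hence $W(T)\supset W(PT_{\vert L})$, and $PT_{\vert L}$ is a bounded antilinear operator on the two-dimensional space $L$. This reduces the theorem to the two-dimensional bounded case.

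For a bounded antilinear $A$ on a two-dimensional space $L$, I would invoke the result of \cite{Vmuller}: $W(A)$ is convex when $\dim L>1$. If a self-contained argument is wanted, proceed as follows. Fix a conjugation $C$ on $L$ and write $A=CB$ with $B$ linear, so that $\langle Ax,x\rangle=\langle CBx,x\rangle$. In an orthonormal basis $\{e_1,e_2\}$ with $Ce_i=e_i$, this equals $\overline{x}^{\,T}$-free bilinear expression $x^{T}\widetilde{B}x$ conjugated, that is, a complex symmetric bilinear form $q(x)=\sum b_{ij}x_ix_j$ conjugated. Only the symmetric part $S=(\widetilde B+\widetilde B^{T})/2$ matters. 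By Takagi factorization, $S=U\Sigma U^{T}$ with $U$ unitary and $\Sigma=\operatorname{diag}(s_1,s_2)$, $s_1\ge s_2\ge0$. The numerical range is then $\{s_1z_1^2+s_2z_2^2:|z_1|^2+|z_2|^2=1\}$. This set is the closed disk of radius $s_1$ centered at $0$. Indeed, with $z_1=r e^{i\theta}$ and $z_2=\sqrt{1-r^2}e^{i\phi}$, the value $s_1r^2e^{2i\theta}+s_2(1-r^2)e^{2i\phi}$ sweeps all moduli in $[0,s_1]$ (choose opposite phases to hit modulus $|s_1r^2-s_2(1-r^2)|$, which runs continuously from $s_2$ down to $0$ and up to $s_1$) and all arguments. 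A disk is convex, so the segment between $\lambda_1$ and $\lambda_2$, both in this disk, lies in $W(PT_{\vert L})\subset W(T)$.

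I expect the main obstacle to be the computation that $W$ of a $2\times2$ antilinear operator is exactly a full disk centered at the origin. In particular one must check that the phase freedom really covers every argument for every modulus in $[0,s_1]$, including the degenerate case $s_2=0$, where the set is $\{s_1r^2e^{2i\theta}\}$, again the full disk. A secondary point is to state clearly how $\dim D(T)>1$ follows from the hypotheses when the domain is dense. The unboundedness of $T$ plays no role once we restrict to a finite-dimensional subspace of $D(T)$.
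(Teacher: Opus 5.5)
Your argument is correct, and it takes a genuinely different route from the paper.

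\textbf{The paper's route.} The paper stays inside the \emph{real} span of $x_1,x_2$. It evaluates $S_1(t,r)=\langle T(tx_1+rx_2),tx_1+rx_2\rangle$ for real $t,r$, solves $\|tx_1+rx_2\|=1$ for $r$, and applies the intermediate value theorem to $S_3(t)=t^2+\beta t\,r(t)$.

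\textbf{Your route.} You make the classical Toeplitz--Hausdorff reduction to the compression $PT_{\vert L}$ on a two-dimensional $L\subset D(T)$. You then compute its numerical range exactly as a disk centered at $0$, using the complex phases of $z_1,z_2$.

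\textbf{Why the difference matters.} In the paper's notation, $\beta$ is in general non-real, so $S_3$ is complex-valued and the intermediate value step does not apply. For instance, take $T$ the coordinatewise conjugation on $\mathbb{C}^2$, $x_1=e_1$ and $x_2=(ie_1+e_2)/\sqrt{2}$. Then $\alpha_1=1$, $\alpha_2=0$ and $\beta=-\sqrt{2}\,i$. The normalized real combinations give only $t^2-\sqrt{2}\,i\,tr$ with $t^2+r^2=1$, which is an ellipse through $0$ and $1$ that misses $1/2$. Yet $1/2=\langle Tv,v\rangle$ for $v=(\sqrt{3}/2,\,i/2)$.

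In general, real combinations trace only a (possibly degenerate) ellipse through $\alpha_1,\alpha_2$. What fills the chord is the phase freedom $\langle T(e^{i\theta}x),e^{i\theta}x\rangle=e^{-2i\theta}\langle Tx,x\rangle$, which your disk computation builds in. Your version also gives $0\in W(T)$ and rotation invariance for free, and unboundedness plays no role since only a two-dimensional piece of $D(T)$ is used.

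\textbf{Your caveat on the domain.} This is a real hypothesis, not a formality. If $D(T)=\operatorname{span}\{x\}$, then $W(T)$ is the circle of radius $|\langle Tx,x\rangle|$. So the theorem needs $\dim D(T)\ge 2$, which is automatic when $D(T)$ is dense and $\dim H>1$.

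\textbf{Two small points to write out.}
\begin{enumerate}
\item The inclusion $|s_1z_1^2+s_2z_2^2|\le s_1|z_1|^2+s_2|z_2|^2\le s_1$. You need it to place $\lambda_1,\lambda_2$ in the disk.
\item The identity $\langle Ax,x\rangle=\overline{x^{T}\widetilde{B}x}$ produces the complex conjugate of your set. This is harmless, since the disk is invariant under conjugation.
\end{enumerate}
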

\begin{proof}
First consider two elements $\alpha_{1}, \alpha_{2} \in W(T).$ Then $\alpha_{1} = \langle Tx_{1}, x_{1}\rangle$ and $\alpha_{2} = \langle Tx_{2}, x_{2} \rangle,$ where $x_{1} , x_{2} \in D(T)$ with $\|x_{1}\| = \|x_{2}\| = 1.$ Choose the real numbers as $t,r.$ Define two functions as follows:
\begin{align*}
S_{1}(t, r) &= \langle T(tx_{1} + rx_{2}), tx_{1} + rx_{2}\rangle\\ & = t^{2} \alpha_{1} + tr \langle Tx_{1},x_{2}\rangle + tr\langle Tx_{2}, x_{1}\rangle + r^{2} \alpha_{2}.
\end{align*}
and 
\begin{align*}
S_{2}(t, r) &= \|tx_{1} +  r x_{2}\|^{2}\\
&=\langle tx_{1}+  rx_{2}, tx_{1} +  r x_{2}\rangle\\
&= t^{2}+ 2 t r Re\langle x_{1}, x_{2} \rangle + r^{2}.
\end{align*}
From $S_{1}$ and $S_{2},$ we get another function 
\begin{align*}
S(t, r) &= \frac{S_{1}(t, r)- \alpha_{2}S_{2}(t, r)}{\alpha_{1}- \alpha_{2}}\\
&= t^{2} + {\beta rt},
\end{align*}
where $\beta = \frac{\langle Tx_{1}, x_{2}\rangle + \langle Tx_{2}, x_{1}\rangle-2\alpha_{2}Re\langle x_{1}, x_{2}\rangle}{\alpha_{1} - \alpha_{2}}.$ When $S_{2}(t, r) = 1,$ then 
\begin{align*}
r = {-t Re\langle x_{1}, x_{2}\rangle \pm \sqrt{t^{2}(Re\langle x_{1}, x_{2}\rangle)^{2}- t^{2}+ 1}}.
\end{align*}
We observe that $r$ is real when $\vert t \vert \leq 1.$ Thus, we get the real solutions of $S_{2}(t, r) =1,$ when $\vert t \vert \leq 1.$  Again we get a new function in the domain $\vert t \vert \leq 1$ as
$$S_{3}(t) = S(t, r) = t^{2} + \beta ({-t Re\langle x_{1}, x_{2}\rangle + \sqrt{t^{2}(Re\langle x_{1}, x_{2}\rangle)^{2}- t^{2}+ 1}})t$$ is continuous. It follows that $S_{3}(0) = 0$ and $S_{3}(1) = 1.$ For $\lambda \in [0,1],$ there exists $t^{'} \in [0,1]$
 such that 
 \begin{align*}\frac{S_{1}(t^{'}, {-t^{'} Re\langle x_{1}, x_{2}\rangle + \sqrt{{t^{'}}^{2}(Re\langle x_{1}, x_{2}\rangle)^{2}- {t^{'}}^{2}+ 1}}) - \alpha_{2}}{\alpha_{1} - \alpha_{2}} = S_{3}(t^{'}) = \lambda.
 \end{align*}
 Hence, we conclude that there are two real numbers $t^{'} \in [0,1]$ and $r^{'} = {-t^{'} Re\langle x_{1}, x_{2}\rangle + \sqrt{{t^{'}}^{2}(Re\langle x_{1}, x_{2}\rangle)^{2}- {t^{'}}^{2}+ 1}}$ with $\|t^{'}x_{1} + r^{'}x_{2}\| = 1$ such that $$\langle T(t^{'}x_{1} + r^{'}x_{2}), t^{'}x_{1} + r^{'}x_{2}\rangle =  S_{1}(t^{'}, r^{'}) = \lambda \alpha_{1} + (1- \lambda)\alpha_{2}.$$ Therefore, $W(T)$ is convex.
\end{proof}
\begin{remark}
For any antilinear operator $T$ on $H$ with $\dim{H} > 1,$ $0 \in W(T).$ The proof proceeds analogously to that of Proposition 2.1 \cite{Vmuller}.  
\end{remark}
The Moore-Penrose inverse is named after E.H Moore and Roger Penrose, who independently introduced the concept in 1920 and 1955, respectively. The Moore-Penrose inverses of closed operators in Hilbert spaces were studied in paper \cite{MR007, MR0396607, MRARUP}.\\ In the following Theorem \ref{thm 22}, we characterize the basic properties of the Moore-Penrose inverses of closed antilinear operators in Hilbert spaces.
\begin{definition}
		Let $T$ be a closed antilinear operator from  $D(T) \subset H$ to  $K$. The Moore-Penrose inverse of $T$ is the map $T^{\dagger}: R(T) \oplus^{\perp} R(T)^{\perp} \to H$ defined by
		\begin{equation}\label{equ 17}
			T^{\dagger} y = 
			\begin{cases}
				({T}\vert_{N(T)^{\perp} \cap D(T)})^{-1}y  & \text{if} ~  y\in R(T)\\
				0    & \text{if}  ~ y\in R(T)^{\perp}.
			\end{cases}
		\end{equation}
    \end{definition}
    It can be shown that $T^{\dagger}$ is also a closed antilinear operator from $K$ to $H.$
\begin{theorem}\label{thm 22}
Let $T$ be a densely defined closed antilinear operator from $H$ to $K.$ Then the following statements hold:
\begin{enumerate}
\item $(T^{\sharp})^{\dagger} = (T^{\dagger})^{\sharp}.$
\item $(T^{\dagger})^{\dagger} = T.$
\item $(T^{\sharp}T)^{\dagger} = T^{\dagger}(T^{\sharp})^{\dagger}.$
\item $(TT^{\sharp})^{\dagger} = (T^{\sharp})^{\dagger}T^{\dagger}.$
\item $\vert T \vert^{\dagger} = \vert (T^{\sharp})^{\dagger}\vert ~ \text{ and }~ \vert T^{\dagger}\vert = \vert T^{\sharp}\vert^{\dagger}.$
\end{enumerate}

  Moreover, if $R(T)$ is closed, then:
  \begin{enumerate}
  \setcounter{enumi}{5}
  \item $T^{\dagger} = \vert T \vert^{\dagger}U_{T^{\sharp}}~ \text{and}~ (T^{\sharp})^{\dagger} = U_{T} \vert (T^{\sharp})^{\dagger} \vert$ is called the polar decomposition of $(T^{\sharp})^{\dagger}.$
  \item $R(T) = R(T^{\sharp})~ \text{ if and only if }~ TT^{\dagger} \supset T^{\dagger}T.$
  \end{enumerate}
\end{theorem}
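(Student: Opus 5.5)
The plan is to reduce every item to the known Moore--Penrose theory of densely defined closed \emph{linear} operators, as was done in Theorems \ref{thm 7}, \ref{thm 9} and \ref{thm 13}. Write $T = CT_{1}$ with $T_{1} = CT$ densely defined and closed, and $C$ a conjugation on $K$.

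The first step is the transfer formula $T^{\dagger} = T_{1}^{\dagger}C$. Since $C$ is a bijective isometry, $N(T) = N(T_{1})$, $R(T) = C R(T_{1})$ and $R(T)^{\perp} = C\,R(T_{1})^{\perp}$; the last identity uses $\langle Cu, Cv\rangle = \langle v,u\rangle$. So $D(T^{\dagger}) = C D(T_{1}^{\dagger})$. For $y = CT_{1}x$ with $x \in N(T)^{\perp}\cap D(T)$ we get $T^{\dagger}y = x = T_{1}^{\dagger}Cy$, and on $R(T)^{\perp}$ both sides vanish. Together with Lemma \ref{lemma 1}, which gives $T^{\sharp} = T_{1}^{*}C$, this yields
\begin{align*}
(T^{\sharp})^{\dagger} = (T_{1}^{*}C)^{\dagger} = C\,(T_{1}^{*})^{\dagger}, \qquad (T^{\dagger})^{\sharp} = (T_{1}^{\dagger}C)^{\sharp} = C\,(T_{1}^{\dagger})^{*}.
\end{align*}
The first of these is a direct check from the definition: $N(T_{1}^{*}C) = C N(T_{1}^{*})$ and the range is unchanged. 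The second uses the Remark after Lemma \ref{lemma 1}, since $C$ is bounded.

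With these formulas, items (1)--(5) follow from the linear identities $(T_{1}^{*})^{\dagger} = (T_{1}^{\dagger})^{*}$, $(T_{1}^{\dagger})^{\dagger} = T_{1}$, $(T_{1}^{*}T_{1})^{\dagger} = T_{1}^{\dagger}(T_{1}^{*})^{\dagger}$ and $(T_{1}T_{1}^{*})^{\dagger} = (T_{1}^{*})^{\dagger}T_{1}^{\dagger}$, which are available for closed densely defined operators (cf. \cite{MR007, MR0396607, MRARUP}). The conjugations cancel in pairs. For example,
\begin{align*}
T^{\dagger}(T^{\sharp})^{\dagger} = T_{1}^{\dagger}CC(T_{1}^{*})^{\dagger} = T_{1}^{\dagger}(T_{1}^{*})^{\dagger} = (T_{1}^{*}T_{1})^{\dagger} = (T^{\sharp}T)^{\dagger},
\end{align*}
and $(T^{\dagger})^{\dagger} = (T_{1}^{\dagger}C)^{\dagger} = C T_{1} = T$, using the same rule for right multiplication by $C$. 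For (4), $TT^{\sharp} = C T_{1}T_{1}^{*} C$, so $(TT^{\sharp})^{\dagger} = C(T_{1}T_{1}^{*})^{\dagger}C$. This requires a two-sided version of the transfer rule, $(CAC)^{\dagger} = CA^{\dagger}C$, which I would prove in the same way.

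For (5), use $|T| = |T_{1}|$ as defined in Theorem \ref{thm 13}. Then $|(T^{\sharp})^{\dagger}| = |C(T_{1}^{*})^{\dagger}| = |(T_{1}^{*})^{\dagger}|$, because $(CA)^{\sharp}(CA) = A^{*}A$, and the linear identity $|T_{1}|^{\dagger} = |(T_{1}^{*})^{\dagger}|$ finishes it. The second part needs care. We have $|T^{\dagger}| = |T_{1}^{\dagger}C|$, and a modulus of an operator of the form $AC$ equals $C|A|C$ by the self-adjointness argument of Theorem \ref{thm 15}. Also $|T^{\sharp}| = |T_{1}^{*}C| = C|T_{1}^{*}|C$. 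The claim then reduces to $|T_{1}^{\dagger}| = |T_{1}^{*}|^{\dagger}$ conjugated by $C$.

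For (6) and (7), assume $R(T)$ is closed; by Corollary \ref{cor 8} so is $R(T^{\sharp})$, and all pseudo-inverses are then bounded and everywhere defined. In the linear case $T_{1}^{\dagger} = |T_{1}|^{\dagger}U_{T_{1}}^{*}$. Hence
\begin{align*}
T^{\dagger} = |T|^{\dagger}U_{T_{1}}^{*}C = |T|^{\dagger}(U_{T})^{\sharp} = |T|^{\dagger}U_{T^{\sharp}},
\end{align*}
where the identification $(U_{T})^{\sharp} = U_{T^{\sharp}}$ is the one established in the proof of Corollary \ref{cor 14}. That part of the argument did not use normality, but I will recheck this. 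Taking adjoints then gives $(T^{\sharp})^{\dagger} = U_{T}|T|^{\dagger}$. Combining this with (5), and checking that the initial space of $U_{T}$ is $\overline{R(|T|)} = R(|T|^{\dagger})$, gives the polar decomposition.

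For (7), $TT^{\dagger} = P_{R(T)}$ and $T^{\dagger}T = P_{N(T)^{\perp}}|_{D(T)} = P_{R(T^{\sharp})}|_{D(T)}$, using Remark \ref{remark 5}. The inclusion $TT^{\dagger} \supset T^{\dagger}T$ means these two projections agree on the dense set $D(T)$, hence everywhere, which is equivalent to $R(T) = R(T^{\sharp})$.

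I expect the main obstacle to be the bookkeeping of where $C$ sits. In particular, I need to verify $(CAC)^{\dagger} = CA^{\dagger}C$ and the modulus formula $|AC| = C|A|C$ for unbounded $A$, including domains. I would prove both once as lemmas, by the density and self-adjointness arguments used in Lemma \ref{lemma 6} and Theorem \ref{thm 15}, and then apply them uniformly.
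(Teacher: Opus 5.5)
Your proposal is correct and follows essentially the same route as the paper. You write $T = CT_{1}$, establish the transfer rules $T^{\dagger} = T_{1}^{\dagger}C$, $(T^{\sharp})^{\dagger} = C(T_{1}^{*})^{\dagger}$ and $(T^{\dagger})^{\sharp} = C(T_{1}^{\dagger})^{*}$, import the linear Moore--Penrose identities from \cite{MR007, MR0396607}, and get (6) from $(U_{T})^{\sharp} = U_{T^{\sharp}}$. As you suspected, that identity holds without normality.

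Your local differences are harmless. For the second identity in (5), the paper simply applies the first identity to $T^{\sharp}$, which spares you the lemma $\vert AC\vert = C\vert A\vert C$. Your projection argument for (7) is a cleaner form of the paper's two inclusions.

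Two small points need correcting. First, $(T_{1}^{\dagger}C)^{\sharp} = C(T_{1}^{\dagger})^{*}$ relies on the bijectivity of $C$; the Remark after Lemma \ref{lemma 1} does not cover it, since that Remark concerns a bounded factor on the left. Second, in (6) the correct check is $\overline{R(\vert T\vert)} = \overline{R(\vert T\vert^{\dagger})}$, because $R(\vert T\vert^{\dagger}) = D(\vert T\vert)\cap N(\vert T\vert)^{\perp}$ need not be closed.
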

\begin{proof}
We express $T = CT_{1},$ where $C$ is a conjugation on $K$ and $T_{1} = CT$ is a densely defined closed operator from $H$ to $K.$\\

$\it(1)$ It is evident that
\begin{align*}
D(T_{1}^{\dagger}C) &= N(T_{1}^{\dagger}C) \oplus N(T_{1}^{\dagger}C)^{\perp}\cap D(T_{1}^{\dagger}C)\\ & = R(CT_{1})^{\perp} \oplus \overline{R(CT_{1})} \cap D(T_{1}^{\dagger}C)\\ &=  R(CT_{1})^{\perp} \oplus  R(CT_{1})\\ &= D(CT_{1})^{\dagger} = D(T^{\dagger}).
\end{align*}
Let $p \in R(CT_{1})^{\perp} = N(T^{\sharp}) = N(T_{1}^{*}C).$ Then $Cp \in R(T_{1})^{\perp}.$ So, $(CT_{1})^{\dagger}p = 0 = T_{1}^{\dagger}Cp.$ Again, consider an element $q \in R(CT_{1}).$ We get $$w \in N(CT_{1})^{\perp} \cap D(CT_{1}) = N(T_{1})^{\perp} \cap D(T_{1})$$ such that $(CT_{1})^{\dagger}q = w.$ It follows that $T_{1}^{\dagger}Cq = w.$ Thus, $(CT_{1})^{\dagger} = T_{1}^{\dagger}C.$ We also obtain that $(T_{1}^{\dagger}C)^{\sharp} = C(T_{1}^{\dagger})^{*}.$ By Theorem 1.5(7) \cite{MR0396607}, we have $(T_{1}^{\dagger}C)^{\sharp} = C(T_{1}^{*})^{\dagger}.$  Observe that 
\begin{align*}
D(T_{1}^{*}C)^{\dagger} &= R(T_{1}^{*}C) \oplus R(T_{1}^{*}C)^{\perp}\\ &= R(T_{1}^{*}) \oplus R(T_{1}^{*})^{\perp}\\ & = D(T_{1}^{*})^{\dagger}= D(C (T_{1}^{*})^{\dagger}).
\end{align*}
Let $z \in R(T_{1}^{*}C)^{\perp} = R(T_{1}^{*})^{\perp}.$ So, $(T_{1}^{*}C)^{\dagger}z = 0 = C(T_{1}^{*})^{\dagger}z.$ For an element $v \in R(T_{1}^{*}C),$ we get $u \in N(T_{1}^{*}C)^{\perp} \cap D(T_{1}^{*}C)$ such that $T_{1}^{*}Cu = v.$ This implies that $Cu \in N(T_{1}^{*})^{\perp} \cap D(T_{1}^{*}).$ Thus, $(T_{1}^{*}C)^{\dagger}v = u = C(T_{1}^{*})^{\dagger}v$ which confirms that  $(T_{1}^{*}C)^{\dagger} = C(T_{1}^{*})^{\dagger}.$\\
Therefore,
$$(T^{\dagger})^{\sharp} = (T_{1}^{\dagger}C)^{\sharp} = C(T_{1}^{*})^{\dagger}= (T_{1}^{*}C)^{\dagger} = (T^{\sharp})^{\dagger}.$$
$\it(2)$ First we claim that $(T_{1}^{\dagger}C)^{\dagger} = CT_{1}.$ Observe that 
\begin{align*}
D(T_{1}^{\dagger}C)^{\dagger} &= R(T_{1}^{\dagger}C) \oplus R(T_{1}^{\dagger}C)^{\perp}\\ &= R(T_{1}^{\dagger}) \oplus R(T_{1}^{\dagger})^{\perp}\\ &= N(T_{1})^{\perp} \cap D(T_{1}) \oplus N(T_{1})\\ &= D(T_{1}).
\end{align*}
Consider an element $y \in R(T_{1}^{\dagger}C)^{\perp} = R(T_{1}^{\dagger})^{\perp} = N(T_{1}).$ So, $(T_{1}^{\dagger}C)^{\dagger}y = 0 = CT_{1}y.$
Again, taking an element $v \in R(T_{1}^{\dagger}C) = N(T_{1})^{\perp} \cap D(T_{1}),$ we get $$u \in N(T_{1}^{\dagger}C)^{\perp} \cap D(T_{1}^{\dagger}C) ~ \text{ and }~ Cu \in N(T_{1}^{\dagger})^{\perp} \cap D(T_{1}^{\dagger}) = R(T_{1})$$ such that $T_{1}^{\dagger}Cu = v.$ It shows that $CT_{1}v = u = (T_{1}^{\dagger}C)^{\dagger}v.$ Hence,
$$(T^{\dagger})^{\dagger} = (T_{1}^{\dagger}C)^{\dagger} = CT_{1} = T.$$
$\it(3)$ By Using Theorem 1.5(9) \cite{MR0396607}, we obtain that 
\begin{align*}
(T^{\sharp}T)^{\dagger} = (T_{1}^{*}T_{1})^{\dagger} = T_{1}^{\dagger}(T_{1}^{\dagger})^{*} = T^{\dagger}C(T_{1}^{*})^{\dagger} = T^{\dagger}(T^{\sharp})^{\dagger}.
\end{align*}
$\it(4)$ We observe that an element $y \in D(CT_{1}T_{1}^{*}C)^{\dagger} = R(CT_{1}T_{1}^{*}C) \oplus R(CT_{1}T_{1}^{*}C)^{\perp}$ if and only if $Cy \in R(T_{1}T_{1}^{*}C) \oplus R(T_{1}T_{1}^{*}C)^{\perp} = D(T_{1}T_{1}^{*}C)^{\dagger}.$ So, it follows that $$(CT_{1}T_{1}^{*}C)^{\dagger} = (T_{1}T_{1}^{*}C)^{\dagger}C.$$ Again,
$$D(T_{1}T_{1}^{*}C)^{\dagger} = D(T_{1}T_{1}^{*})^{\dagger}= D(C(T_{1}T_{1}^{*})^{\dagger}).$$ From the proof of part $(2)$ and Theorem 1.5(10) \cite{MR0396607}, we get $$(T_{1}T_{1}^{*}C)^{\dagger} = C(T_{1}T_{1}^{*})^{\dagger} = C(T_{1}^{*})^{\dagger}T_{1}^{\dagger}.$$ Therefore,
$$(TT^{\sharp})^{\dagger} = C(T_{1}^{*})^{\dagger}T_{1}^{\dagger}C = (T^{\sharp})^{\dagger}T^{\dagger}.$$

$\it(5)$ By definition, we have $\vert T \vert = \vert CT_{1}\vert =\vert T_{1}\vert.$ From Corollary 3.8 \cite{MR007}, we obtain that $$\vert T \vert^{\dagger} = \vert T_{1}\vert^{\dagger} = \vert (T_{1}^{*})^{\dagger} \vert = \vert (T_{1}^{\dagger})^{*}\vert= \vert C(T_{1}^{\dagger})^{*} \vert = \vert (T^{\sharp})^{\dagger}\vert.$$ The second identity is also valid because $$\vert T^{\dagger}\vert = \vert ((T^{\dagger})^{\sharp})^{\sharp}\vert = \vert T^{\sharp}\vert^{\dagger}.$$
$\it(6)$ By Corollary \ref{cor 8}, we can say that $R(T)$ is closed if and only if $R(T^{\sharp}) = R(T_{1}^{*})$ is closed if and only if $R(T_{1})$ is closed. From Lemma 2.6 \cite{MR0396607} and the statement $(U_{T})^{\sharp} = U_{T^{\sharp}}$ from Corollary \ref{cor 14}, we obatin that 
$$T^{\dagger} = T_{1}^{\dagger}C= \vert T_{1}\vert^{\dagger}U_{T_{1}^{*}}C = \vert T\vert^{\dagger} (U_{T})^{\sharp} = \vert T \vert^{\dagger}U_{T^{\sharp}}.$$ It shows that the polar decomposition of $$(T^{\sharp})^{\dagger} = (\vert T \vert^{\dagger}U_{T^{\sharp}})^{\sharp} =  (\vert T_{1} \vert^{\dagger}U_{T^{\sharp}})^{\sharp} = U_{T}\vert T\vert^{\dagger} = U_{T} \vert (T^{\sharp})^{\dagger}\vert.$$

$\it(7)$ Since, $TT^{\dagger} = T^{\dagger}T$ on $D(T).$ $N(T)^{\perp} \cap D(T) \subset R(T).$ Again $R(T)$ is closed which implies $R(T^{\sharp}) \subset R(T).$ Consider an element $x \in N(T).$ It shows that $TT^{\dagger}x = 0.$ Thus, $N(T) \subset R(T)^{\perp}.$ Hence, $R(T) = R(T^{\sharp}).$\\
To prove the converse, assume that $R(T) = R(T^{\sharp}).$ Then 
$$T^{\dagger}T \subset P_{N(T)^{\perp}} = P _{R(T)} = TT^{\dagger}.$$
\end{proof}
\section{Antilinear block operator matrices}
Let $\mathcal{A} = \begin{bmatrix}
  A & B\\
  F & E
  \end{bmatrix}$.  Here $A$ and  $E$ are antilinear operators on $H$ and $K$ respectively whereas $B$ and $F$ are  antilinear operators from $K$ to $H$ and $H$ to $K$ respectively.  Observe that
\[
D(\mathcal A)=\bigl(D(A)\cap D(F)\bigr)\oplus \bigl(D(B)\cap D(E)\bigr).
\]
Here, the operators \(A,B,F,\) and \(E\) associated with \(\mathcal A\) are defined as described earlier. Determining necessary and sufficient conditions for the closability---and, in particular, the closedness---of \(\mathcal A\) is of significant interest, as these properties play a fundamental role in establishing important analytical results such as the Closed Range Theorem, polar decomposition, and spectral properties of \(\mathcal A\).

Linear block operator matrices on Hilbert spaces have attracted considerable attention in recent years; see, for instance, \cite{MR2463978, majumdar2024hyers}. In the subsequent theorems, we explore several structural and analytical aspects of \(\mathcal A\) with the aim of characterizing its closability properties.\\
It is worth noting that the resolvent set of a closed antilinear operator \(T\) on a Hilbert space \(H\), denoted by \(\rho(T)\), consists of all complex numbers \(\lambda \in \mathbb{C}\) for which the real linear (additive) operator $T-\lambda$ is bijective, that is, both one-to-one and onto. Whenever the operator \(T-\lambda\) is bijective, its inverse $(T-\lambda)^{-1}$ is necessarily bounded on \(H\) by virtue of the Open Mapping Theorem for real Banach (equivalently, real Hilbert) spaces. The spectrum of \(T\), denoted by \(\sigma(T)\), is defined as the complement of the resolvent set \(\rho(T)\) in the complex plane \(\mathbb{C}\); that is,
$\sigma(T)=\mathbb{C}\setminus \rho(T).$
\begin{theorem}\label{thm 4.1}
Assume that
\[
D(A)\subset D(F), \quad \rho(A)\neq \emptyset,
\]
and that for some \(\mu\in \rho(A)\), the real linear operator \((A-\mu)^{-1}B\) is bounded on \(D(B)\), while the real linear operator \(F(A-\mu)^{-1}\) is closed on \(H\). Then the block operator matrix \(\mathcal A\) is closable if and only if its Schur complement
\[
S_{2}(\mu)=E-\mu-F(A-\mu)^{-1}B
\]
is closable. Moreover, in this case, the closure of \(\mathcal A\) is given by
\begin{equation}\label{equ 18}
\overline{\mathcal A}
=\mu+
\begin{bmatrix}
I & 0\\
F(A-\mu)^{-1} & I
\end{bmatrix}
\begin{bmatrix}
A-\mu & 0\\
0 & \overline{S_{2}(\mu)}
\end{bmatrix}
\begin{bmatrix}
I & \overline{(A-\mu)^{-1}{B}}\\
0 & I
\end{bmatrix}.
\end{equation}
\end{theorem}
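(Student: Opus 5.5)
The plan is to transfer the classical Frobenius--Schur factorization argument (as in Tretter's treatment of linear block operator matrices) to the antilinear/real-linear setting. Note first that \(\mu\) enters as a complex scalar while \(A,B,F,E\) are antilinear, so \(A-\mu\), \(S_2(\mu)\) and all products below are only real linear (additive and \(\mathbb{R}\)-homogeneous). Closedness, closability and closures are purely graph-theoretic notions, so they make sense verbatim for real linear operators between the underlying real Hilbert spaces. The first step is to verify the algebraic identity on \(D(\mathcal A)=D(A)\oplus(D(B)\cap D(E))\) (using \(D(A)\subset D(F)\)):
\[
\mathcal A-\mu=
\begin{bmatrix} I & 0\\ F(A-\mu)^{-1} & I\end{bmatrix}
\begin{bmatrix} A-\mu & 0\\ 0 & S_2(\mu)\end{bmatrix}
\begin{bmatrix} I & (A-\mu)^{-1}B\\ 0 & I\end{bmatrix}.
\]
This is a direct multiplication. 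It only needs additivity and the observation that \((A-\mu)(A-\mu)^{-1}=I\) on \(H\). One also checks that the domains match: \((x,y)\) lies in the domain of the right side iff \(y\in D(B)\cap D(E)\) and \(x+(A-\mu)^{-1}By\in D(A)\), which is equivalent to \(x\in D(A)\) because \(R((A-\mu)^{-1})=D(A)\).

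Next I analyse the outer factors. Since \(F(A-\mu)^{-1}\) is closed and defined on all of \(H\), the closed graph theorem for real Banach spaces makes it bounded. Hence the left factor \(L\) is a bounded real linear bijection of \(H\oplus K\), with bounded inverse obtained by replacing \(F(A-\mu)^{-1}\) by its negative. Because \((A-\mu)^{-1}B\) is bounded on \(D(B)\), it has a unique bounded extension \(\overline{(A-\mu)^{-1}B}\) to \(\overline{D(B)}\). The right factor \(R\) restricted to \(H\oplus(D(B)\cap D(E))\) is likewise a bounded bijection onto its image, with bounded inverse of the same triangular form. Its closure \(\overline R\) is the bounded triangular operator containing \(\overline{(A-\mu)^{-1}B}\). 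Since \(\overline R\) has a bounded inverse, composing with it maps convergent graph sequences to convergent graph sequences.

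With this, closability can be read off the middle factor. Left multiplication by the boundedly invertible \(L\) preserves closability and satisfies \(\overline{LM}=L\overline{M}\). Similarly, right multiplication by \(R\) on the pulled-back domain satisfies \(\overline{M R}=\overline{M}\,\overline{R}\). I will verify this directly with sequences: if \(w_n\to w\) and \(MRw_n\to z\), then \(Rw_n\to\overline R w\) and \(M(Rw_n)\to z\), and conversely by applying \(R^{-1}\). So \(\mathcal A-\mu\), and hence \(\mathcal A\), is closable iff \(\operatorname{diag}(A-\mu,S_2(\mu))\) is closable. Since \(A\) is closed (as \(\rho(A)\neq\emptyset\)) and \(A-\mu\) differs from it by a bounded perturbation, a diagonal operator is closable iff each entry is, so this happens iff \(S_2(\mu)\) is closable. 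In that case
\[
\overline{\mathcal A}=\mu+L\,\operatorname{diag}\bigl(A-\mu,\overline{S_2(\mu)}\bigr)\,\overline R,
\]
which is (\ref{equ 18}).

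The main obstacle is the right factor. Its domain is \(H\oplus(D(B)\cap D(E))\), which need not be dense, so one must confirm that the closure of \(M R\) is \(\overline M\,\overline R\) with exactly the domain \(\{w:\overline R w\in D(\overline M)\}\). For this I need the second component of the graph limit to lie in \(\overline{D(B)}\), where \(\overline{(A-\mu)^{-1}B}\) is defined. Since \(D(\overline{S_2(\mu)})\subset\overline{D(B)\cap D(E)}\subset \overline{D(B)}\), this holds. Carrying out this sequence argument carefully, in both directions, is the step I expect to require the most care.
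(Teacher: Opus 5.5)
Your proposal is correct and follows essentially the same route as the paper. Both use the Frobenius--Schur factorization $\mathcal A-\mu=L\,\operatorname{diag}(A-\mu,S_2(\mu))\,R$ with boundedly invertible outer factors, which reduces closability of $\mathcal A$ to closability of $S_2(\mu)$ and gives the closure by closing the middle and right factors. Your sequence check that $\overline{MR}=\overline{M}\,\overline{R}$ on $\{w\in H\oplus\overline{D(B)}:\overline{R}\,w\in D(\overline{M})\}$, using $D(\overline{S_2(\mu)})\subset\overline{D(B)}$, is more careful than the paper's proof, which asserts this identity without checking the domain of $\overline{(A-\mu)^{-1}B}$.
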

\begin{proof}
Since the real linear operator $(A- \mu)^{-1}B$ is bounded on $D(B),$ then we can extend to $\overline{D(B)}$ to get a new real linear closed operator $\widehat{((A-\mu)^{-1}B)}$ on domain $\overline{D(B)}.$ So, $(A-\mu)^{-1}B$ is closable and the real linear closed operator $\overline{(A-\mu)^{-1}B}$ is bounded on domain $\overline{D(B)}.$ Then 
\[
\begin{bmatrix}
I & \overline{(A-\mu)^{-1}{B}}\\
0 & I
\end{bmatrix},
\qquad \text{and} \qquad
\begin{bmatrix}
I & 0\\
F(A-\mu)^{-1} & I
\end{bmatrix}.
\]
both are bounded invertible real linear operators on $H \times \overline{D(B)}$ and $H \times K,$ respectively. Since, $\mathcal{A}$ is closable. The closable linear real opeartor $\mathcal{A}- \mu$ can be expressed as
$$\mathcal{A-\mu} =  \begin{bmatrix}
I & 0\\
F(A-\mu)^{-1} & I
\end{bmatrix}
\begin{bmatrix}
A-\mu & 0\\
0 & {S_{2}(\mu)}
\end{bmatrix}
\begin{bmatrix}
I & (A-\mu)^{-1}{B}\\
0 & I
\end{bmatrix}.$$ So, $$\begin{bmatrix}
I & 0\\
-F(A-\mu)^{-1} & I
\end{bmatrix} \overline{(\mathcal{A}- \mu)} = \overline{\begin{bmatrix}
A-\mu & 0\\
0 & {S_{2}(\mu)}
\end{bmatrix}
\begin{bmatrix}
I & (A-\mu)^{-1}{B}\\
0 & I
\end{bmatrix}}.$$ 
Let $(0,y) \in \overline{G(S_{2}(\mu))}.$ Then there exists a sequence $\{x_{n}\} \in D(S_{2}(\mu))$ such that $x_{n} \to 0,$ and $S_{2}(\mu)x_{n} \to y,$ as $n \to \infty.$  Hence, $$\begin{bmatrix}
A-\mu & 0\\
0 & {S_{2}(\mu)}
\end{bmatrix}
\begin{bmatrix}
I & (A-\mu)^{-1}{B}\\
0 & I
\end{bmatrix} \begin{pmatrix}
-(A -\mu)^{-1}Bx_{n}\\
x_n
\end{pmatrix} \to \begin{pmatrix}
0\\
y
\end{pmatrix},$$ as $n \to \infty.$ Thus, $y = 0$ which confirms that $S_{2}(\mu)$ is closable.\\
Conversely, consider $S_{2}(\mu)$ is closable. Then
$$\mathcal{A} - \mu \subset \begin{bmatrix}
I & 0\\
F(A-\mu)^{-1} & I
\end{bmatrix}
\begin{bmatrix}
A-\mu & 0\\
0 & \overline{S_{2}(\mu)}
\end{bmatrix}
\begin{bmatrix}
I & (\overline{A-\mu)^{-1}{B}}\\
0 & I
\end{bmatrix}$$
is closable, so $\mathcal{A}$ is also closable. Furthermore,
 \begin{align*}\begin{bmatrix}
I & 0\\
-F(A-\mu)^{-1} & I
\end{bmatrix} \overline{(\mathcal{A}- \mu)} &= \overline{\begin{bmatrix}
A-\mu & 0\\
0 & {S_{2}(\mu)}
\end{bmatrix}
\begin{bmatrix}
I & (A-\mu)^{-1}{B}\\
0 & I
\end{bmatrix}}\\
&= \begin{bmatrix}
A-\mu & 0\\
0 & \overline{S_{2}(\mu)}
\end{bmatrix}
\begin{bmatrix}
I & \overline{(A-\mu)^{-1}{B}}\\
0 & I
\end{bmatrix}.
\end{align*}
Therefore,
$$\overline{\mathcal A}- \mu = \overline{\mathcal{A}- \mu}=
\begin{bmatrix}
I & 0\\
F(A-\mu)^{-1} & I
\end{bmatrix}
\begin{bmatrix}
A-\mu & 0\\
0 & \overline{S_{2}(\mu)}
\end{bmatrix}
\begin{bmatrix}
I & \overline{(A-\mu)^{-1}{B}}\\
0 & I
\end{bmatrix}.$$
\end{proof}
\begin{remark}\label{remark 4.2}
The conclusion of Theorem~\ref{thm 4.1} remains valid for every \(\lambda \in \rho(A)\). If for some fixed \(\mu \in \rho(A)\) the operator \((A-\mu)^{-1}B\) is bounded on \(D(B)\) and $F(A-\mu)^{-1}$ is closed.

Indeed, for all \(\lambda\) satisfying
\[
|\lambda-\mu|<\frac{1}{\|(A-\mu)^{-1}\|},
\]
the real linear operator
\[
F(A-\lambda)^{-1}
=
F\sum_{n=0}^{\infty}(\lambda-\mu)^n\bigl((A-\mu)^{-1}\bigr)^{n+1}
\]
is bounded. This follows from the assumption \(D(F)\subset D(A)\) together with the boundedness of the real linear operator \(F(A-\mu)^{-1}\). Consequently, \((A-\lambda)^{-1}B\) is also bounded on \(D(B)\).
\end{remark}
\begin{theorem}\label{thm 4.3}
Assume that \[
\|Fh\| \leq a \|Ah\|, ~\text{for all}~ h \in D(A)\subset D(F) ~\text{and}~ a\geq 0, \quad 0 \in \rho(A),
\]
and the linear operator \(A^{-1}B\) is bounded on \(D(B)\), while the Schur complement $S_{2}(0) = E -FA^{-1}B$ is closable. Then $R(\overline{\mathcal{A}})$ is closed if and only if $R(\overline{S_{2}(0)})$ is closed.
\end{theorem}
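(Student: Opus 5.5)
The plan is to use the factorization of Theorem \ref{thm 4.1} with $\mu=0$. First I check its hypotheses. We have $0\in\rho(A)$ and $A^{-1}B$ is bounded on $D(B)$, so it remains to see that $FA^{-1}$ is closed, in fact bounded. For $y\in H$ we have $A^{-1}y\in D(A)\subset D(F)$, and the relative bound gives
\[
\|FA^{-1}y\|\le a\|AA^{-1}y\|=a\|y\|.
\]
Hence $FA^{-1}$ is an everywhere defined bounded real linear operator, and therefore closed. Since $S_2(0)$ is closable, Theorem \ref{thm 4.1} shows that $\mathcal A$ is closable and
\[
\overline{\mathcal A}=L\,\mathcal D\,R,\qquad L=\begin{bmatrix} I & 0\\ FA^{-1} & I\end{bmatrix},\quad \mathcal D=\begin{bmatrix} A & 0\\ 0 & \overline{S_2(0)}\end{bmatrix},\quad R=\begin{bmatrix} I & \overline{A^{-1}B}\\ 0 & I\end{bmatrix}.
\]

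Next I record that $L$ and $R$ are bounded real linear bijections with bounded inverses. The inverses are
\[
L^{-1}=\begin{bmatrix} I & 0\\ -FA^{-1} & I\end{bmatrix},\qquad R^{-1}=\begin{bmatrix} I & -\overline{A^{-1}B}\\ 0 & I\end{bmatrix}.
\]
Here $R$ acts on $H\times\overline{D(B)}$. I read $\overline{S_2(0)}$ as an operator in $\overline{D(B)}$, as in Theorem \ref{thm 4.1}. Since $R$ is a bijection of $H\times\overline{D(B)}$ onto itself, it maps $D(\overline{\mathcal A})$ onto $D(\mathcal D)$. Therefore
\[
R(\overline{\mathcal A})=L\bigl(R(\mathcal D)\bigr).
\]
Because $L$ is a homeomorphism of $H\times K$, the range $R(\overline{\mathcal A})$ is closed if and only if $R(\mathcal D)$ is closed.

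Finally, $\mathcal D$ is diagonal, so $R(\mathcal D)=R(A)\times R(\overline{S_2(0)})$. Since $0\in\rho(A)$, we have $R(A)=H$, which is closed. A product of two subspaces is closed in the product space if and only if each factor is closed: if $R(\mathcal D)$ is closed, then $R(\overline{S_2(0)})$ is its intersection with $\{0\}\times K$, viewed as a subspace of $K$, and is therefore closed. Hence $R(\mathcal D)$ is closed if and only if $R(\overline{S_2(0)})$ is closed, which gives the statement.

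The step I expect to be the main obstacle is the domain and range bookkeeping in the factorization. The closure of $A^{-1}B$ lives on $\overline{D(B)}$ rather than on all of $K$, and the factors are only real linear. To handle this I would verify directly, vector by vector, that $R$ maps $D(\overline{\mathcal A})$ bijectively onto $D(A)\times D(\overline{S_2(0)})$, rather than relying on formal matrix manipulation. I would do the same check for $L$ on $H\times K$. Everything else follows from the fact that bounded invertible real linear maps preserve closedness of ranges.
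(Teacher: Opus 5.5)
Your proposal is correct and follows essentially the paper's route: both rest on the factorization $\overline{\mathcal A}=L\mathcal D R$ from Theorem~\ref{thm 4.1} at $\mu=0$, where $FA^{-1}$ is bounded because the relative bound gives $\|FA^{-1}y\|\le a\|y\|$. The paper then proves $\overline{R(\mathcal A)}\subseteq R(\overline{\mathcal A})$ and $\overline{R(S_{2}(0))}\subseteq R(\overline{S_{2}(0)})$ by explicit sequence arguments; these are just the unpacked form of your identity $R(\overline{\mathcal A})=L\bigl(H\times R(\overline{S_{2}(0)})\bigr)$, combined with the facts that $L$ is a homeomorphism and fixes $\{0\}\times K$ pointwise, so your version is a cleaner packaging of the same argument.
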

\begin{proof}
It is evident that $FA^{-1}$ is bounded in the domain $H.$
   \[
\overline{\mathcal A}
=
\begin{bmatrix}
I & 0\\
FA^{-1} & I
\end{bmatrix}
\begin{bmatrix}
A & 0\\
0 & \overline{S_{2}(0)}
\end{bmatrix}
\begin{bmatrix}
I & \overline{A^{-1}B}\\
0 & I
\end{bmatrix},
\]
where
\[
S_{2}(0)=E-FA^{-1}B.
\]

Further,
\[
\overline{\mathcal A}
\begin{pmatrix}
u\\
v
\end{pmatrix}
=
\begin{pmatrix}
Au+A\overline{A^{-1}B}v\\
Fu+F\overline{A^{-1}B}v+\overline{S_{2}(0)}v
\end{pmatrix},
\qquad
\begin{pmatrix}
u\\
v
\end{pmatrix}
\in D(\overline{\mathcal A}).
\]

We first show that
\[
\overline{R(\mathcal A)}
\subseteq
R(\overline{\mathcal A}),
\]
whenever \(R(\overline{S_{2}(0)})\) is closed. The reverse inclusion follows directly from the definition of a closable antilinear operator.

Let $\begin{pmatrix}
x\\
y
\end{pmatrix}
\in
\overline{R(\mathcal A)}.$
Then there exists a sequence
$$
\left\{
\begin{pmatrix}
x_{n}\\
y_{n}
\end{pmatrix}
\right\}
\subset
D(\mathcal A)
=
D(A)\oplus \bigl(D(B)\cap D(E)\bigr)$$
such that
\[
Ax_{n}+By_{n}\to x,
\qquad
Fx_{n}+Ey_{n}\to y,
\]
as \(n\to\infty\). Consequently,
$Fx_{n}+FA^{-1}By_{n}\to FA^{-1}x,$
and therefore,
$$(E-FA^{-1}B)y_{n}
=
S_{2}(0)y_{n}
\to
y-FA^{-1}x,$$
as \(n\to\infty\).

Since
$R(\overline{S_{2}(0)})
=
\overline{R(S_{2}(0))},$
there exists \(w\in D(\overline{S_{2}(0)})\) such that $\overline{S_{2}(0)}w
=
y-FA^{-1}x.$

Hence,
\[
\overline{\mathcal A}
\begin{pmatrix}
A^{-1}x-\overline{A^{-1}B}w\\
w
\end{pmatrix}
=
\begin{pmatrix}
x\\
y
\end{pmatrix},
\]
which implies that $
\begin{pmatrix}
x\\
y
\end{pmatrix}
\in
R(\overline{\mathcal A}).$
Thus, $\overline{R(\mathcal A)}
=
R(\overline{\mathcal A}),$
and consequently \(R(\overline{\mathcal A})\) is closed.

Conversely, assume that \(R(\overline{\mathcal A})\) is closed. We prove that
$\overline{R(S_{2}(0))}
\subseteq
R(\overline{S_{2}(0)}).$

Let
$y\in \overline{R(S_{2}(0))}.$
Then there exists a sequence \(\{x_{n}\}\subset D(S_{2}(0))\) such that
\[
(E-FA^{-1}B)x_{n}
=
S_{2}(0)x_{n}
\to y,
\]
as \(n\to\infty\). Therefore,
\[
\overline{\mathcal A}
\begin{pmatrix}
-\overline{A^{-1}B}x_{n}\\
x_{n}
\end{pmatrix}
=
\begin{pmatrix}
0\\
S_{2}(0)x_{n}
\end{pmatrix}
\to
\begin{pmatrix}
0\\
y
\end{pmatrix},
\]
as \(n\to\infty\).

Since \(R(\overline{\mathcal A})\) is closed, there exists
$\begin{pmatrix}
u_{0}\\
v_{0}
\end{pmatrix}
\in D(\overline{\mathcal A})$
such that
$Au_{0}+A\overline{A^{-1}B}v_{0}=0$
and

$Fu_{0}
+
F\overline{A^{-1}B}v_{0}
+
\overline{S_{2}(0)}v_{0}
=
y.$
Using the inequality
$\|F x\|\le a\|A x\|,$
we obtain $$Fu_{0}+F\overline{A^{-1}B}v_{0}=0.$$
Hence, $\overline{S_{2}(0)}{v_{0}}=y,$ which shows that
$y\in R(\overline{S_{2}(0)}).$ Therefore, $\overline{R(S_{2}(0))}
= R(\overline{S_{2}(0)}),$ thus $R(\overline{S_{2}(0)})$ is closed.
\end{proof}
\begin{theorem}\label{thm 4.4}
Assume that
\[
D(E)\subset D(B), \quad \rho(E)\neq \emptyset,
\]
and that for some \(\mu\in \rho(E)\), the real linear operator \((E-\mu)^{-1}F\) is bounded on \(D(F)\), while the real linear operator \(B(E-\mu)^{-1}\) is closed on \(K\). Then the block operator matrix \(\mathcal A\) is closable if and only if its Schur complement
\[
S_{1}(\mu)=A-\mu-B(E-\mu)^{-1}F
\]
is closable. Moreover, in this case, the closure of \(\mathcal A\) is given by
\begin{equation}\label{equ 19}
\overline{\mathcal A}
=\mu+
\begin{bmatrix}
I & B(E-\mu)^{-1}\\
0 & I
\end{bmatrix}
\begin{bmatrix}
\overline{S_{1}(\mu)} & 0\\
0 & E-\mu
\end{bmatrix}
\begin{bmatrix}
I & 0\\
\overline{(E-\mu)^{-1}{F}} & I
\end{bmatrix}.
\end{equation}
\end{theorem}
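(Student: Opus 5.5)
The plan is to mirror the proof of Theorem \ref{thm 4.1}, with the roles of the two diagonal entries interchanged. First I would extend the real linear operator $(E-\mu)^{-1}F$, which is bounded on $D(F)$, by continuity to $\overline{D(F)}$. This yields the bounded real linear operator $\overline{(E-\mu)^{-1}F}$. It follows that the triangular factors
\[
\begin{bmatrix} I & B(E-\mu)^{-1}\\ 0 & I\end{bmatrix},\qquad
\begin{bmatrix} I & 0\\ \overline{(E-\mu)^{-1}F} & I\end{bmatrix}
\]
are bounded real linear bijections, with inverses obtained by changing the sign of the off-diagonal entry. Closedness of $B(E-\mu)^{-1}$ on all of $K$ gives its boundedness by the closed graph theorem for real Banach spaces. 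Next I would use $D(E)\subset D(B)$ to check the factorization on $D(\mathcal A)$:
\[
\mathcal A-\mu=\begin{bmatrix} I & B(E-\mu)^{-1}\\ 0 & I\end{bmatrix}
\begin{bmatrix} S_1(\mu) & 0\\ 0 & E-\mu\end{bmatrix}
\begin{bmatrix} I & 0\\ (E-\mu)^{-1}F & I\end{bmatrix}.
\]
Expanding the product, the $(1,2)$ entry is $B(E-\mu)^{-1}(E-\mu)=B$ on $D(E)$. The $(1,1)$ entry is $S_1(\mu)+B(E-\mu)^{-1}F=A-\mu$. The bottom row is $(F,\,E-\mu)$. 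Since the left factor is bounded and boundedly invertible, $\mathcal A$ is closable exactly when the product $\mathcal M$ of the last two factors is closable, and the left factor commutes with taking closures.

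For necessity, I would take $(0,y)\in\overline{G(S_1(\mu))}$, so that $x_n\to0$ and $S_1(\mu)x_n\to y$. I would then feed the vectors $\bigl(x_n,\,-(E-\mu)^{-1}Fx_n\bigr)$ into $\mathcal M$. The second coordinate tends to $0$ by boundedness. The image is $(S_1(\mu)x_n,0)\to(y,0)$. Closability of $\mathcal M$ then forces $y=0$. For sufficiency, $\mathcal A-\mu$ is contained in the same product with $S_1(\mu)$ replaced by $\overline{S_1(\mu)}$ and $(E-\mu)^{-1}F$ replaced by its closure. That product is closed: it is a bounded bijection composed with a closed diagonal operator composed with a bounded bijection. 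Hence $\mathcal A$ is closable.

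The main obstacle is identifying the closure precisely, that is, showing $\overline{\mathcal M}=\operatorname{diag}(\overline{S_1(\mu)},E-\mu)\begin{bmatrix} I&0\\ \overline{(E-\mu)^{-1}F}&I\end{bmatrix}$ and not just an inclusion. I would handle this by composing on the right with the bounded inverse of the right triangular factor. Since closure commutes with composition by bounded bijections, this reduces the question to the closure of the diagonal operator $\operatorname{diag}(S_1(\mu),E-\mu)$. Its closure is $\operatorname{diag}(\overline{S_1(\mu)},E-\mu)$, because $E-\mu$ is already closed, as $\mu\in\rho(E)$. Some care is needed with domains here, in particular that $\overline{D(F)}$ contains the relevant first components. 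Adding $\mu$ back then gives (\ref{equ 19}).
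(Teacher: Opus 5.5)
Your proposal is correct and follows essentially the same route as the paper, whose proof simply mirrors that of Theorem~\ref{thm 4.1}: the same triangular factorization of $\mathcal A-\mu$, the same test sequence $\bigl(x_n,-(E-\mu)^{-1}Fx_n\bigr)$ for necessity, and containment of $\mathcal A-\mu$ in a closed product for sufficiency. Your step identifying the closure by reducing to the closure of the diagonal factor, working on the closed subspace $\overline{D(F)}\times K$, supplies a justification that the paper only asserts.
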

\begin{proof}
The proof proceeds analogously to that of Theorem~\ref{thm 4.1}.
\end{proof}
\begin{theorem}\label{thm 4.5}
Assume that \[
\|Bh\| \leq b \|Eh\|, ~\text{for all}~ h \in D(E)\subset D(B) ~\text{and}~ b\geq 0, \quad 0 \in \rho(E),
\]
and the linear operator \(E^{-1}F\) is bounded on \(D(F)\), while the Schur complement $S_{1}(0) = A -BE^{-1}F$ is closable. Then $R(\overline{\mathcal{A}})$ is closed if and only if $R(\overline{S_{1}(0)})$ is closed.
\end{theorem}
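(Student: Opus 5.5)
The proof mirrors that of Theorem~\ref{thm 4.3}, with the roles of the two diagonal entries exchanged. We use the factorization of Theorem~\ref{thm 4.4} with $\mu = 0$.

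First, the hypothesis $\|Bh\|\le b\|Eh\|$ on $D(E)\subset D(B)$ together with $0\in\rho(E)$ shows that $BE^{-1}$ is bounded on all of $K$, with $\|BE^{-1}k\|\le b\|k\|$. In particular it is closed. Also $E^{-1}F$ is bounded on $D(F)$ and $S_{1}(0)$ is closable. Theorem~\ref{thm 4.4} therefore gives
\[
\overline{\mathcal A}=
\begin{bmatrix} I & BE^{-1}\\ 0 & I\end{bmatrix}
\begin{bmatrix} \overline{S_{1}(0)} & 0\\ 0 & E\end{bmatrix}
\begin{bmatrix} I & 0\\ \overline{E^{-1}F} & I\end{bmatrix},
\]
so that for $(u,v)\in D(\overline{\mathcal A})$,
\[
\overline{\mathcal A}(u,v)=\bigl(\overline{S_{1}(0)}u+BE^{-1}E(\overline{E^{-1}F}u+v),\ E\overline{E^{-1}F}u+Ev\bigr).
\]

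Next, assume $R(\overline{S_{1}(0)})$ is closed; we show $\overline{R(\mathcal A)}\subseteq R(\overline{\mathcal A})$. Take $(Ax_n+By_n,\,Fx_n+Ey_n)\to(x,y)$. Applying the bounded operator $BE^{-1}$ to the second component gives $BE^{-1}Fx_n+By_n\to BE^{-1}y$. Subtracting this from the first component yields
\[
S_{1}(0)x_n=Ax_n-BE^{-1}Fx_n\to x-BE^{-1}y .
\]
By closedness, there is $w\in D(\overline{S_{1}(0)})$ with $\overline{S_{1}(0)}w=x-BE^{-1}y$. We then check directly from the displayed formula that
\[
\overline{\mathcal A}\bigl(w,\ E^{-1}y-\overline{E^{-1}F}w\bigr)=(x,y).
\]
Since $R(\overline{\mathcal A})\subseteq\overline{R(\mathcal A)}$ trivially, $R(\overline{\mathcal A})$ is closed.

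Conversely, assume $R(\overline{\mathcal A})$ is closed, and let $S_{1}(0)x_n\to y$. The vectors $(x_n,-\overline{E^{-1}F}x_n)$ lie in $D(\overline{\mathcal A})$ and are mapped to $(S_{1}(0)x_n,0)\to(y,0)$. Closedness of the range gives $(u_0,v_0)$ with
\[
E(\overline{E^{-1}F}u_0+v_0)=0
\quad\text{and}\quad
\overline{S_{1}(0)}u_0+BE^{-1}E(\overline{E^{-1}F}u_0+v_0)=y .
\]
The inequality $\|Bh\|\le b\|Eh\|$, applied to $h=\overline{E^{-1}F}u_0+v_0\in D(E)$ (or simply injectivity of $E$), forces the $B$-term to vanish. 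Hence $\overline{S_{1}(0)}u_0=y$, and $R(\overline{S_{1}(0)})$ is closed.

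The main obstacle is the domain bookkeeping in the formula for $\overline{\mathcal A}$. We must verify that the chosen vectors really lie in $D(\overline{\mathcal A})$, in particular that $\overline{E^{-1}F}u+v\in D(E)$. We must also confirm that the real-linearity of all the factors does not break the identity $BE^{-1}E=B$ on $D(E)$; it does not, since compositions of additive maps behave as in the linear case.
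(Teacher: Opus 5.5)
Your proposal is correct and is the argument the paper intends. The paper's proof only says it follows the lines of Theorem~\ref{thm 4.3}, and you carry out exactly that mirrored argument: factor $\overline{\mathcal A}$ via Theorem~\ref{thm 4.4} at $\mu=0$, apply the everywhere-defined bounded operator $BE^{-1}$ to reduce $\overline{R(\mathcal A)}\subseteq R(\overline{\mathcal A})$ to closedness of $R(\overline{S_{1}(0)})$, and use injectivity of $E$ (or $\|Bh\|\le b\|Eh\|$) to kill the $B$-term in the converse.

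One small point: write the second component of $\overline{\mathcal A}(u,v)$ as $E(\overline{E^{-1}F}u+v)$ rather than $E\overline{E^{-1}F}u+Ev$. Only the sum is guaranteed to lie in $D(E)$, and that combined form is how you in fact use it.
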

\begin{proof}
The proof follows along the same lines as the proof of Theorem \ref{thm 4.3}.
\end{proof}
\begin{theorem}\label{thm 4.6}
Assume that $$D(F) \subset D(A),~~ F \text{ is bijective },$$ 
$F^{-1}E$ is bounded on $D(E)$ and $(A-\mu)F^{-1}$ is closed. Then $\mathcal A$ is closable if and only if its quadratic complement $$T_{2}(\mu) =  B - (A-\mu)F^{-1}(E-\mu) $$ is closable for some (and hence for all) $\mu \in \mathbb {C}$. Moreover, in this case,  the closure $\overline{\mathcal A}$ is given by
\begin{equation}\label{equ 20}
    \overline{\mathcal A} = \mu + \begin{bmatrix}
        I & (A-\mu)F^{-1}\\
        0 & I
    \end{bmatrix}
    \begin{bmatrix}
    0 & \overline{T_{2}(\mu)}\\
    F & 0
    \end{bmatrix}
    \begin{bmatrix}
    I & \overline{F^{-1}(E-\mu)}\\
    0 & I
    \end{bmatrix}.
\end{equation}
\end{theorem}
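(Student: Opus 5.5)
We follow the scheme of Theorem~\ref{thm 4.1}, now factoring through the off-diagonal operator $F$ instead of a resolvent. The first step is to verify, for a fixed $\mu\in\mathbb{C}$, the factorization of the real linear operator $\mathcal A-\mu$:
\[
\mathcal A-\mu=
\begin{bmatrix}
I & (A-\mu)F^{-1}\\
0 & I
\end{bmatrix}
\begin{bmatrix}
0 & T_{2}(\mu)\\
F & 0
\end{bmatrix}
\begin{bmatrix}
I & F^{-1}(E-\mu)\\
0 & I
\end{bmatrix}.
\]
For $(u,v)\in D(\mathcal A)$, the right factor gives $(u+F^{-1}(E-\mu)v,\,v)$. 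The middle factor then gives $(T_{2}(\mu)v,\,Fu+(E-\mu)v)$. The left factor finally gives $T_{2}(\mu)v+(A-\mu)u+(A-\mu)F^{-1}(E-\mu)v=(A-\mu)u+Bv$ in the first slot and $Fu+(E-\mu)v$ in the second. The assumption $D(F)\subset D(A)$ ensures that the compositions are defined, so that $D(\mathcal A)=D(F)\oplus(D(B)\cap D(E))$. Since $\mu$ only enters through the scalar $\mu$ (a real linear perturbation), closability of $\mathcal A$ and of $\mathcal A-\mu$ coincide. Moreover, $T_{2}(\mu)-T_{2}(\mu')$ is the sum of terms of the form $(\mu'-\mu)F^{-1}(E-\cdot)$, $AF^{-1}$ and a scalar multiple of $F^{-1}$. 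These are bounded by the hypotheses: $F^{-1}E$ is bounded, $F^{-1}$ is bounded by the open mapping theorem since $F$ is a bijective closed map, and $(A-\mu)F^{-1}$ is closed and everywhere defined, hence bounded by the closed graph theorem. Bounded perturbations do not affect closability, which gives the ``for some, hence for all $\mu$'' part.

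Next, the outer triangular factors are bounded and boundedly invertible. As in Theorem~\ref{thm 4.1}, $F^{-1}(E-\mu)$ is bounded on $D(E)$, so it has a bounded closure $\overline{F^{-1}(E-\mu)}$ on $\overline{D(E)}$. The upper factor involving $(A-\mu)F^{-1}$ is bounded by the closed graph argument above, and its inverse has $-(A-\mu)F^{-1}$ in the corner. Hence $\mathcal A-\mu$ is closable if and only if the product $\mathcal M:=\begin{bmatrix}0&T_2(\mu)\\F&0\end{bmatrix}\begin{bmatrix}I&F^{-1}(E-\mu)\\0&I\end{bmatrix}$ is closable, and
\[
\overline{\mathcal A-\mu}=\begin{bmatrix}I&(A-\mu)F^{-1}\\0&I\end{bmatrix}\overline{\mathcal M}.
\]

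Closability of $\mathcal M$ versus $T_2(\mu)$ is then checked with sequences. Suppose $T_2(\mu)$ is not closable: there are $x_n\to0$ with $T_2(\mu)x_n\to y\neq0$. Apply $\mathcal M$ to $\bigl(-F^{-1}(E-\mu)x_n,\,x_n\bigr)$; this vector tends to $0$ by boundedness, and its image is $(T_2(\mu)x_n,0)\to(y,0)$. So $\mathcal M$ is not closable. Conversely, if $T_2(\mu)$ is closable, then $\mathcal M$ is contained in $\begin{bmatrix}0&\overline{T_2(\mu)}\\F&0\end{bmatrix}\begin{bmatrix}I&\overline{F^{-1}(E-\mu)}\\0&I\end{bmatrix}$. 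This operator is closed, because it is a closed antidiagonal operator ($F$ is closed, being bijective with bounded inverse) composed with a bounded bijection. Hence $\mathcal M$ is closable.

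The main obstacle is the identification of $\overline{\mathcal M}$ with this closed operator, i.e.\ showing the inclusion is an equality, which is needed for \eqref{equ 20}. The plan is to show that $D(\mathcal M)$ is a core. Given $(u,v)$ in the domain of the right-hand side, take $v_n\in D(T_2(\mu))$ with $v_n\to v$ and $T_2(\mu)v_n\to\overline{T_2(\mu)}v$. Set
\[
u_n=u+\overline{F^{-1}(E-\mu)}v-F^{-1}(E-\mu)v_n .
\]
The difference $u_n-u$ is then a bounded image of $v-v_n$, so $u_n\to u$, and $u_n\in D(F)$ provided $u+\overline{F^{-1}(E-\mu)}v\in D(F)$, which holds by the domain definition of the right-hand side. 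Then $\mathcal M(u_n,v_n)$ converges to the right-hand side applied to $(u,v)$. A subtle point is whether $D(T_2(\mu))=D(B)\cap D(E)$ matches the domain used here; this must be checked carefully. Finally, adding $\mu$ yields \eqref{equ 20}.
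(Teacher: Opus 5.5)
Your route is the paper's: the same factorization of $\mathcal A-\mu$, the same test vectors $(-F^{-1}(E-\mu)x_n,x_n)$ for ``$\mathcal A$ closable $\Rightarrow$ $T_2(\mu)$ closable'', and the same inclusion of $\mathcal A-\mu$ into the product containing $\overline{T_2(\mu)}$ for the converse. Your core argument with $u_n=u+\overline{F^{-1}(E-\mu)}v-F^{-1}(E-\mu)v_n$ is correct. It proves that $\overline{\mathcal M}$ equals $\begin{bmatrix}0&\overline{T_2(\mu)}\\F&0\end{bmatrix}\begin{bmatrix}I&\overline{F^{-1}(E-\mu)}\\0&I\end{bmatrix}$, which the paper only asserts. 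Your perturbation argument for ``hence for all $\mu$'' is likewise absent from the paper. The domain point you flag is immediate: $R(F^{-1})=D(F)\subset D(A)$ makes $(A-\mu)F^{-1}$ everywhere defined on $K$, so $D(T_2(\mu))=D(B)\cap D(E)$.

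The genuine gap is the closedness of $F$. You get $F^{-1}$ bounded because ``$F$ is a bijective closed map'', and later $F$ closed because it is ``bijective with bounded inverse''. This is circular, and closedness of $F$ is not among the hypotheses. Yet everything rests on it:
\begin{itemize}
\item boundedness of $F^{-1}(E-\mu)$ for $\mu\neq 0$;
\item boundedness of $(A-\mu')F^{-1}$ for $\mu'\neq\mu$;
\item closedness of the middle factor.
\end{itemize}
It cannot be derived from the hypotheses imposed at a single $\mu$. Take $A=0$, $B=0$, $E=0$, $\mu=0$, and let $F\colon H\to K$ be an everywhere defined unbounded antilinear bijection. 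By the closed graph theorem such an $F$ is non-closable. All stated hypotheses hold and $T_2(0)=0$ is closable, but $\mathcal A=\begin{bmatrix}0&0\\F&0\end{bmatrix}$ is not closable.

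The paper's proof silently uses the same assumption: it asserts that $\overline{F^{-1}(E-\mu)}$ is bounded and that the product with $F$ in the corner is closed. So the fix is to make the assumption explicit. Either assume $F$ is closed (equivalently, $F^{-1}$ is bounded), or require $(A-\mu)F^{-1}$ to be closed for two distinct values $\mu\neq\mu'$. In the latter case their difference is $(\mu'-\mu)F^{-1}$, which is then bounded. With that added, your argument is complete.
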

\begin{proof}
Since, $F^{-1}E$ is bounded on $D(E).$ So, $\overline{F^{-1}(E- \mu)}$ is a real linear bounded operator on $\overline{D(E)}.$ When $\mathcal{A}$ is closable, then
\begin{align*}
\begin{bmatrix}
        I & -(A-\mu)F^{-1}\\
        0 & I
    \end{bmatrix}\overline{{\mathcal A}- \mu} =
 \overline{ \begin{bmatrix}
    0 & {T_{2}(\mu)}\\
    F & 0
    \end{bmatrix}
    \begin{bmatrix}
    I & {F^{-1}(E-\mu)}\\
    0 & I
    \end{bmatrix}}.
\end{align*}
Let $(0, y) \in \overline{G(T_{2}(\mu))}.$ There exists a sequence $\{x_{n}\}$ in $K$ such that $x_{n} \to 0$ and $T_{2}(\mu)x_{n} \to y,$ as $n \to \infty.$ Thus,
$$\begin{bmatrix}
    0 & {T_{2}(\mu)}\\
    F & 0
    \end{bmatrix}
    \begin{bmatrix}
    I & {F^{-1}(E-\mu)}\\
    0 & I
    \end{bmatrix}\begin{pmatrix} -F^{-1}(E -\mu)x_{n}\\ 
    x_{n}\end{pmatrix} \to \begin{pmatrix} y\\ 0\end{pmatrix},$$ as $n\to \infty.$ So, $y= 0$ which implies that $T_{2}(\mu)$ is closable.\\
    Conversely, assume that $T_{2}(\mu)$ is closable. Then,
    \begin{align*}
    {\mathcal A} - \mu \subset \begin{bmatrix}
        I & (A-\mu)F^{-1}\\
        0 & I
    \end{bmatrix}
    \begin{bmatrix}
    0 & \overline{T_{2}(\mu)}\\
    F & 0
    \end{bmatrix}
    \begin{bmatrix}
    I & \overline{F^{-1}(E-\mu)}\\
    0 & I
    \end{bmatrix}.
    \end{align*}
    This shows that \(\mathcal{A}-\mu\) is closable, and consequently, \(\mathcal{A}\) is also closable. Furthermore,
    \begin{align*}
     \begin{bmatrix}
        I & -(A-\mu)F^{-1}\\
        0 & I
    \end{bmatrix}\overline{{\mathcal A} - \mu} &=
    \overline{\begin{bmatrix}
    0 & {T_{2}(\mu)}\\
    F & 0
    \end{bmatrix}
    \begin{bmatrix}
    I & {F^{-1}(E-\mu)}\\
    0 & I
    \end{bmatrix}}\\
    &= \begin{bmatrix}
    0 & \overline{T_{2}(\mu)}\\
    F & 0
    \end{bmatrix}
    \begin{bmatrix}
    I & \overline{F^{-1}(E-\mu)}\\
    0 & I
    \end{bmatrix}.
    \end{align*}
 Therefore,  $$\overline{\mathcal A} = \mu + \begin{bmatrix}
        I & (A-\mu)F^{-1}\\
        0 & I
    \end{bmatrix}
    \begin{bmatrix}
    0 & \overline{T_{2}(\mu)}\\
    F & 0
    \end{bmatrix}
    \begin{bmatrix}
    I & \overline{F^{-1}(E-\mu)}\\
    0 & I
    \end{bmatrix}.$$  
\end{proof}
\begin{theorem}\label{thm 4.7}
Assume that $$D(B) \subset D(E),~~ B \text{ is bijective },$$ $B^{-1}A$ is bounded on $D(A)$ and $(E-\mu)B^{-1}$ is closed. Then $\mathcal A$ is closable if and only if its the quadratic complement $$T_{1}(\mu) = F - (E-\mu)B^{-1}(A-\mu)$$ is closable for some (and hence for all) $\mu \in \mathbb {C}$. Moreover, in this case, the closure $\overline{\mathcal A}$ is given by
\begin{equation}\label{equ 21}
    \overline{\mathcal A} = \mu + \begin{bmatrix}
        I & 0\\
        (E- \mu)B^{-1} & I
    \end{bmatrix}
    \begin{bmatrix}
    0 & B\\
    \overline{T_{1}(\mu)} & 0
    \end{bmatrix}
    \begin{bmatrix}
    I & 0\\
    \overline{B^{-1}(A-\mu)} & I
    \end{bmatrix}
\end{equation}
\end{theorem}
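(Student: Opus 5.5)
The argument mirrors that of Theorem \ref{thm 4.6}, with the roles of the two rows interchanged. The first step is to record the algebraic factorization valid on $D(\mathcal A)$. Since $B$ is bijective and $D(B)\subset D(E)$, the operator $(E-\mu)B^{-1}$ is defined on all of $H$. Since $B^{-1}A$ is bounded on $D(A)$, so is $B^{-1}(A-\mu)=B^{-1}A-\mu B^{-1}$. Here one uses that $B^{-1}$ is bounded by the real open mapping theorem, because $B$ is closed and bijective (this is implicit in the hypotheses). Hence $B^{-1}(A-\mu)$ extends to a bounded real linear operator $\overline{B^{-1}(A-\mu)}$ on $\overline{D(A)}$, and
\[
\begin{bmatrix} I & 0\\ \overline{B^{-1}(A-\mu)} & I\end{bmatrix},\qquad \begin{bmatrix} I & 0\\ (E-\mu)B^{-1} & I\end{bmatrix}
\]
are invertible real linear operators, with inverses obtained by negating the off-diagonal entry. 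The left factor is bounded because $(E-\mu)B^{-1}$ is closed and everywhere defined, so the closed graph theorem applies in the real setting. A direct computation on a vector $(x,y)^{T}\in D(\mathcal A)$ gives
\[
\mathcal A-\mu=\begin{bmatrix} I & 0\\ (E-\mu)B^{-1} & I\end{bmatrix}\begin{bmatrix} 0 & B\\ T_{1}(\mu) & 0\end{bmatrix}\begin{bmatrix} I & 0\\ B^{-1}(A-\mu) & I\end{bmatrix}.
\]
To verify it, note that the right factor sends $(x,y)$ to $(x,\,y+B^{-1}(A-\mu)x)$. The middle factor then produces $\bigl((A-\mu)x+By,\ T_{1}(\mu)x\bigr)$. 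The left factor adds $(E-\mu)B^{-1}\bigl((A-\mu)x+By\bigr)$ to the second component, and this restores $Fx+(E-\mu)y$.

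For necessity, assume $\mathcal A$ is closable. Multiplying $\overline{\mathcal A-\mu}$ on the left by the bounded inverse of the left factor identifies the closure of the product of the middle and right factors. Now take $x_n\to 0$ with $T_{1}(\mu)x_n\to y$. Feed in the vectors $\bigl(x_n,\,-B^{-1}(A-\mu)x_n\bigr)$. Their second components tend to $0$ because $B^{-1}(A-\mu)$ is bounded. The image of these vectors under the product is $\bigl(0,\,T_{1}(\mu)x_n\bigr)\to(0,y)$. Closability then forces $y=0$, so $T_{1}(\mu)$ is closable.

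For sufficiency, assume $T_{1}(\mu)$ is closable. Then $\mathcal A-\mu$ is contained in the product with $T_{1}(\mu)$ and $B^{-1}(A-\mu)$ replaced by their closures. The middle factor $\begin{bmatrix}0&B\\ \overline{T_{1}(\mu)}&0\end{bmatrix}$ is closed, being closed operators placed off-diagonally. Composing a closed operator on the right with a bounded invertible operator, and on the left with a bounded invertible operator, preserves closedness. So $\mathcal A-\mu$ has a closed extension, and $\mathcal A$ is closable.

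Formula (\ref{equ 21}) follows once we show that the closure of the product of the middle and right factors equals the same product with closures inserted. This is the main obstacle. I would prove it by a graph approximation. Take $(x,z)$ with $x\in D(\overline{T_{1}(\mu)})$, $x\in\overline{D(A)}$, and $z+\overline{B^{-1}(A-\mu)}x\in D(B)$. Choose $x_n\in D(A)\cap D(F)$ with $x_n\to x$ and $T_{1}(\mu)x_n\to\overline{T_{1}(\mu)}x$. Then correct the second coordinate by $y_n=z+\overline{B^{-1}(A-\mu)}x-B^{-1}(A-\mu)x_n$. Boundedness of $B^{-1}(A-\mu)$ gives convergence of the $B$-part. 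The domain bookkeeping here requires a core argument for $D(A)\cap D(F)$, which I would carry out as in Theorem \ref{thm 4.1}.

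The independence of $\mu$ comes from the identity $T_{1}(\mu)=T_{1}(0)+\mu(EB^{-1}+B^{-1}A)-\mu^{2}B^{-1}$ on $D(A)\cap D(F)$. For a closable operator, adding a bounded perturbation does not affect closability. The relevant perturbation here is bounded: $B^{-1}$ is bounded, and $EB^{-1}$ and $B^{-1}A$ are bounded as in the first step, with $EB^{-1}$ closed on all of $H$ and $B^{-1}A$ bounded on $D(A)$.
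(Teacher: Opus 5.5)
Your proposal is correct and follows essentially the paper's route, which is the row-swapped analogue of the proof of Theorem~\ref{thm 4.6}: the same three-factor decomposition of $\mathcal A-\mu$, the same test vectors $\bigl(x_n,-B^{-1}(A-\mu)x_n\bigr)$ for necessity, and containment in the product with closures for sufficiency, and your graph approximation actually proves the equality of closures that the paper only asserts (no extra core argument is needed there, since the $x_n$ come directly from the definition of $\overline{T_{1}(\mu)}$ and the corrected $y_n$ lie in $D(B)$ automatically). One caveat: closedness of $B$ (equivalently, boundedness of $B^{-1}$) is not implied by the stated hypotheses, and the equivalence fails without it (take $A=E=F=0$, $\mu=0$ and $B$ a non-closable antilinear bijection), but the paper's argument tacitly uses it too, both for boundedness of $\overline{B^{-1}(A-\mu)}$ and for closedness of the middle factor, so this is an implicit assumption you share with the paper rather than a defect of your route.
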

\begin{proof}
The proof proceeds analogously to that of Theorem~\ref{thm 4.6}.
\end{proof}
Let \(T\) be an antilinear operator on a Hilbert space \(H\). The \emph{point spectrum} of \(T\), denoted by \(\sigma_{p}(T)\), is defined as
\[
\sigma_{p}(T)
=
\left\{
\lambda \in \sigma(T)
:\;
T-\lambda \text{ is not injective as a real linear operator}
\right\}.
\]

The \emph{residual spectrum} of \(T\), denoted by \(\sigma_{r}(T)\), is defined by
\[
\sigma_{r}(T)
=
\left\{
\lambda \in \mathbb{C}
:\;
\begin{array}{l}
T-\lambda \text{ is injective as a real linear operator,} \\[2mm]
R(T-\lambda)=\overline{R(T-\lambda)} \neq H
\end{array}
\right\}.
\]

The \emph{continuous spectrum} of \(T\), denoted by \(\sigma_{c}(T)\), is given by
\[
\sigma_{c}(T)
=
\left\{
\lambda \in \mathbb{C}
:\;
\begin{array}{l}
T-\lambda \text{ is injective as a real linear operator,} \\[2mm]
R(T-\lambda)\neq \overline{R(T-\lambda)}
\end{array}
\right\}.
\]

Assume now that the conditions $(A -\lambda)^{-1}B$ is bounded on $D(B),$ $F(A -\lambda)^{-1}$ is closed in $H$ with $D(A) \subset D(F)$ and $S_{2}(\lambda)$ is closable, for some $\lambda \in \rho(A)$. We define the following spectra associated with the operator function $\overline{S_{2}}$ on domain $\rho(A),$ defined by $$\overline{S_{2}}(\mu) = \overline{S_{2}(\mu)}, ~ \text{ for all}~ \mu \in \rho(A).$$

\begin{enumerate}
    \item The spectrum of \(\overline{S_{2}}\):
    \[
    \sigma(\overline{S_{2}})
    =
    \left\{
    \mu \in \rho(A)
    :\;
    0 \in \sigma\!\left(\overline{S_{2}(\mu)}\right)
    \right\}.
    \]

    \item The point spectrum of \(\overline{S_{2}}\):
    \[
    \sigma_{p}(\overline{S_{2}})
    =
    \left\{
    \mu \in \rho(A)
    :\;
    0 \in \sigma_{p}\!\left(\overline{S_{2}(\mu)}\right)
    \right\}.
    \]

    \item The continuous spectrum of \(\overline{S_{2}}\):
    \[
    \sigma_{c}(\overline{S_{2}})
    =
    \left\{
    \mu \in \rho(A)
    :\;
    0 \in \sigma_{c}\!\left(\overline{S_{2}(\mu)}\right)
    \right\}.
    \]

    \item The residual spectrum of \(\overline{S_{2}}\):
    \[
    \sigma_{r}(\overline{S_{2}})
    =
    \left\{
    \mu \in \rho(A)
    :\;
    0 \in \sigma_{r}\!\left(\overline{S_{2}(\mu)}\right)
    \right\}.
    \]
\end{enumerate}

\begin{theorem}\label{thm 4.8}
Assume that $$D(A) \subset D(F), ~~ \rho(A) \neq 0,$$ and that for some $\mu \in \rho(A),$ the real linear operator $(A-\mu)^{-1}B$ is bounded on $D(B),$ while the real linear operator $F(A - \mu)^{-1}$ is closed in $H$ with the Schur complement $S_{2}(\mu) = E- \mu- F(A- \mu)^{-1}B$ is closable. Then the following statements hold:
\begin{enumerate}
\item $\sigma(\overline{\mathcal{A}}) \setminus \sigma(A) = \sigma(\overline{S_{2}}).$
\item ${\sigma_{p}}(\overline{\mathcal{A}}) \setminus \sigma(A) = {\sigma_{p}}(\overline{S_{2}}).$
\item ${\sigma_{c}}(\overline{\mathcal{A}}) \setminus \sigma(A) = {\sigma_{c}}(\overline{S_{2}}).$
\item ${\sigma_{r}}(\overline{\mathcal{A}}) \setminus \sigma(A) = {\sigma_{r}}(\overline{S_{2}}).$
\end{enumerate}
\end{theorem}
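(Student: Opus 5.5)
The plan is to use the factorization \eqref{equ 18} from Theorem~\ref{thm 4.1}, applied at an arbitrary $\lambda \in \rho(A)$; Remark~\ref{remark 4.2} guarantees that the hypotheses carry over from the fixed $\mu$ to every $\lambda \in \rho(A)$. For such $\lambda$ this gives
\[
\overline{\mathcal A}-\lambda
=\begin{bmatrix} I & 0\\ F(A-\lambda)^{-1} & I\end{bmatrix}
\begin{bmatrix} A-\lambda & 0\\ 0 & \overline{S_{2}(\lambda)}\end{bmatrix}
\begin{bmatrix} I & \overline{(A-\lambda)^{-1}B}\\ 0 & I\end{bmatrix}.
\]
The two outer factors are bounded, real linear and bijective. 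Their inverses are obtained by flipping the sign of the off-diagonal entry, and they are again bounded. The middle diagonal factor has first entry $A-\lambda$, which is bijective because $\lambda\in\rho(A)$.

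The key steps, in order, are as follows.

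\textbf{Injectivity.} $\overline{\mathcal A}-\lambda$ is injective if and only if the diagonal operator is injective, which holds if and only if $\overline{S_{2}(\lambda)}$ is injective. This is because the right factor is a bijection of the domain, mapping $D(\overline{\mathcal A})$ onto $D(A)\oplus D(\overline{S_2(\lambda)})$, and the left factor is injective.

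\textbf{Range.} Since $A-\lambda$ is onto,
\[
R(\overline{\mathcal A}-\lambda)=L\bigl(H\oplus R(\overline{S_{2}(\lambda)})\bigr),
\]
where $L$ denotes the bounded invertible left factor. Because $L$ is a homeomorphism, $R(\overline{\mathcal A}-\lambda)$ is closed, respectively equal to the whole space, respectively dense, if and only if $R(\overline{S_2(\lambda)})$ has the same property.

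\textbf{Reading off the spectra.} Combining the injectivity and range statements gives the four identities:
\begin{enumerate}
\item $\lambda\in\rho(\overline{\mathcal A})$ if and only if $0\in\rho(\overline{S_2(\lambda)})$, which yields (1);
\item non-injectivity transfers directly, which yields (2);
\item for injective operators, closedness of the range together with the range being proper transfers, which gives (4);
\item for injective operators, non-closedness of the range transfers, which gives (3).
\end{enumerate}
Restricting to $\lambda\notin\sigma(A)$ is exactly what makes the factorization available, so each set $\sigma_\ast(\overline{\mathcal A})\setminus\sigma(A)$ equals $\{\lambda\in\rho(A): 0\in\sigma_\ast(\overline{S_2(\lambda)})\}$.

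\textbf{Main obstacle.} The delicate point is the domain bookkeeping in the factorization. The closure $\overline{(A-\lambda)^{-1}B}$ is only defined on $\overline{D(B)}$, so the right factor is a bijection onto the appropriate product domain, and one must check that it maps $D(\overline{\mathcal A})$ exactly onto $D(A)\oplus D(\overline{S_2(\lambda)})$. Surjectivity onto $H\times K$ also needs $\overline{D(B)}$ to be large enough when $\overline{S_2(\lambda)}$ is onto.

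A second concern is justifying that Theorem~\ref{thm 4.1} applies at every $\lambda\in\rho(A)$, not only in a disc around $\mu$. For this I would invoke Remark~\ref{remark 4.2} via the resolvent identity:
\[
F(A-\lambda)^{-1}=F(A-\mu)^{-1}\bigl(I+(\lambda-\mu)(A-\lambda)^{-1}\bigr).
\]
This resolvent identity must be used with care, since $A$ is antilinear while $\lambda$ acts complex-linearly, so $A-\lambda$ is only real linear.
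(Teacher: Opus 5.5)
Your proposal is correct and follows essentially the same route as the paper. Both use the factorization (\ref{equ 18}) at each $\lambda\in\rho(A)$ and transfer injectivity, range closedness and surjectivity between $\overline{\mathcal A}-\lambda$ and $\overline{S_{2}(\lambda)}$ through the bounded invertible outer factors; the paper argues item by item by contradiction, while you package it once via $R(\overline{\mathcal A}-\lambda)=L(H\oplus R(\overline{S_{2}(\lambda)}))$.

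The domain worry you flag is harmless: $D(\overline{\mathcal A})\subset H\times\overline{D(B)}$, and the range depends only on $R(\overline{S_{2}(\lambda)})$. Your resolvent identity also justifies the passage to all of $\rho(A)$ more fully than Remark~\ref{remark 4.2}, which only treats a disc around $\mu$.
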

\begin{proof}
$\it(1)$ Let $\alpha \in \sigma(\overline{\mathcal{A}}) \setminus \sigma(A).$ If $0 \in \rho(\overline{S_{2}(\alpha)}),$ then the real linear operator $(\overline{S_{2}(\alpha)})^{-1}$ is bounded. So, 
\begin{align*}(\overline{\mathcal{A}}- \alpha)^{-1} &= \begin{bmatrix} I & -(A- \alpha)^{-1}\\
0 & I\end{bmatrix} \begin{bmatrix} (A - \alpha)^{-1} & 0\\
0 & (\overline{S_{2}(\alpha)})^{-1}\end{bmatrix} \begin{bmatrix} I & 0 \\ -F(A-\alpha)^{-1} & I\end{bmatrix}\end{align*}
is bounded on domain $H \times K$ which is a contradiction. Thus, $0 \in \sigma(\overline{S_{2}(\alpha)}).$ Hence, $\sigma(\overline{\mathcal{A}}) \setminus \sigma(A) \subset\sigma(\overline{S_{2}}).$\\
Conversely, assume $0 \in \sigma(\overline{S_{2}(\beta)})$ and $\beta \in \rho(A).$ Then $(A-\beta)^{-1}B$ is bounded on $D(B)$ and $F(A-\beta)^{-1}$ is bounded on  domain $H.$ If $\beta \in \rho(\overline{\mathcal{A}}),$ then 
\begin{align*} 
\begin{bmatrix} A -\beta & 0 \\0 &\overline{S_{2}(\beta)} \end{bmatrix} = \begin{bmatrix} I & 0\\ -F(A - \beta)^{-1} & I\end{bmatrix}(\overline{A}-\beta)\begin{bmatrix} I & -\overline{(A-\beta)^{-1}B} \\ 0 & I\end{bmatrix}
\end{align*}
is invertible. So, $\overline{S_{2}(\beta)}$ is invertible which is again a contradiction. Therefore, $$\sigma(\overline{\mathcal{A}}) \setminus \sigma(A) \supset \sigma(\overline{S_{2}}).$$\\
$\it(2)$ The assertion follows directly from the definitions of the point spectra of \(\overline{\mathcal{A}}\) and \(\overline{S_{2}}\).\\
$\it(3)$ Let us assume $\alpha \in \rho(A)$ and $R(\overline{S_{2}(\alpha)}) \neq \overline{R(\overline{S_{2}(\alpha)})}$ with $\overline{S_{2}(\alpha)}$ is one-one. So, $\overline{\mathcal{A}} - \alpha$ is one-one.  If $R(\overline{\mathcal{A}} - \alpha) = \overline{R(\overline{\mathcal{A}} - \alpha)},$ then $R(\overline{\mathcal{A}} - \alpha)$ is closed. It follows that
\begin{align*}
R\bigg(\begin{bmatrix} A-\alpha & 0\\ 0 & \overline{S_{2}(\alpha)}\end{bmatrix}\bigg) &= R\bigg(\begin{bmatrix} A-\alpha & 0\\ 0 & \overline{S_{2}(\alpha)}\end{bmatrix} \begin{bmatrix} I & \overline{(A-\alpha)^{-1}B}\\ 0 & I \end{bmatrix}\bigg)\\
&= R\bigg(\begin{bmatrix} I & 0 \\ -F(A-\alpha)^{-1}& I\end{bmatrix}(\overline{\mathcal{A}} - \alpha)\bigg)
\end{align*}
is closed. Hence, $R(\overline{S_{2}(\alpha)})$ is closed which is a contradiction. It shows that $${\sigma_{c}}(\overline{\mathcal{A}}) \setminus \sigma(A) \supset {\sigma_{c}}(\overline{S_{2}}).$$\\
Conversely, assume $\beta \in {\sigma_{c}}(\overline{\mathcal{A}}) \setminus \sigma(A)$ but $R(\overline{S_{2}(\beta)}) = \overline{R(\overline{S_{2}(\beta)})}.$ Then $\overline{S_{2}(\beta)}$ is one-one and
\begin{align*}
R\bigg(\begin{bmatrix} I & 0 \\ -F(A-\beta)^{-1}& I\end{bmatrix}(\overline{\mathcal{A}} - \beta)\bigg) = H \times R(\overline{S_{2}(\beta)})
\end{align*}
is closed. This shows that $R(\overline{\mathcal{A}}- \beta) $ is closed because of the invertibility of $\begin{bmatrix} I & 0\\ -F(A-\beta)^{-1}& I\end{bmatrix}$. This is a contradiction. Therefore, $${\sigma_{c}}(\overline{\mathcal{A}}) \setminus \sigma(A) \subset {\sigma_{c}}(\overline{S_{2}}).$$\\
$(\it{4})$ Let $\alpha \in \rho(A),$ $\overline{S_{2}(\alpha)}$ is one-one and $ R(\overline{S_{2}(\alpha)}) = \overline{ R(\overline{S_{2}(\alpha)})} \neq K$ but $\alpha \notin \sigma_{r}(\overline{\mathcal{A}}).$ So, $\overline{\mathcal{A}}- \alpha$ is one-one. From the condition $R(\overline{S_{2}(\alpha)}) = \overline{ R(\overline{S_{2}(\alpha)})},$ we obtain $R(\overline{\mathcal{A}} - \alpha) = \overline{R(\overline{\mathcal{A}} - \alpha)}.$ Then $R(\overline{\mathcal{A}} - \alpha) = H \times K.$ It shows that
\begin{align*}
 R \bigg(\begin{bmatrix} I & 0\\ F(A - \alpha)^{-1} & I \end{bmatrix}\begin{bmatrix} A-\alpha & 0\\ 0 & \overline{S_{2}(\alpha)}\end{bmatrix}\bigg) = H \times K.
\end{align*}
Now, let us consider $\begin{pmatrix} 0\\ k\end{pmatrix} \in H \times K.$ We get $\begin{pmatrix} u_{1}\\ u_{2}\end{pmatrix} \in H \times K$ such that 
\begin{align*}
\begin{bmatrix} I & 0\\ F(A - \alpha)^{-1} & I \end{bmatrix}\begin{bmatrix} A-\alpha & 0\\ 0 & \overline{S_{2}(\alpha)}\end{bmatrix}\begin{pmatrix} u_{1}\\ u_{2} \end{pmatrix} = \begin{pmatrix} 0 \\ k\end{pmatrix}.
\end{align*}
It is evident that $\overline{S_{2}(\alpha)}u_{2} =k.$ Thus, $R(\overline{S_{2}(\alpha})) = K$ which is a contradiction. Hence,
$${\sigma_{r}}(\overline{\mathcal{A}}) \setminus \sigma(A) \supset {\sigma_{r}}(\overline{S_{2}}).$$
Conversely, assume $\beta \in {\sigma_{r}}(\overline{\mathcal{A}}) \setminus \sigma(A).$ So, $\overline{S_{2}(\beta)}$ is also one-one. If $\beta \notin {\sigma_{r}}(\overline{S_{2}}),$ then $R(\overline{S_{2}(\beta)}) = \overline{R(\overline{S_{2}(\beta)})} = K,$ where the first identity holds because of $R(\overline{\mathcal{A}} - \beta) = \overline{R(\overline{\mathcal{A}} - \beta)}.$ Now, consider an element $\begin{pmatrix} h \\ k \end{pmatrix} \in H \times K.$ We get two elements $u \in D(A)$ and $v \in D(\overline{S_{2}(\beta)})$ such that $(A - \beta)u = h$ and $\overline{S_{2}(\beta)}v=k -Fu.$ Hence,
$$\begin{bmatrix} I & 0\\ F(A-\beta)^{-1}& I\end{bmatrix}\begin{bmatrix} A-\beta & 0 \\ 0 & \overline{S_{2}(\beta)}\end{bmatrix} \begin{pmatrix} u \\ v\end{pmatrix} = \begin{pmatrix} h \\ k\end{pmatrix}.$$ 
It shows that $R(\overline{\mathcal{A}} - \beta) = R\bigg(\begin{bmatrix} I & 0\\ F(A-\beta)^{-1}& I\end{bmatrix}\begin{bmatrix} A-\beta & 0 \\ 0 & \overline{S_{2}(\beta)}\end{bmatrix}\bigg) = H \times K$ which is a contradiction. Therefore, $${\sigma_{r}}(\overline{\mathcal{A}}) \setminus \sigma(A) \subset {\sigma_{r}}(\overline{S_{2}}).$$
\end{proof}
Assume now that the conditions $(E -\lambda)^{-1}F$ is bounded on $D(F),$ $B(E -\lambda)^{-1}$ is closed in $K$ with $D(E) \subset D(B)$ and $S_{1}(\lambda)$ is closable, for some $\lambda \in \rho(E)$. Similarly, we can define the spectra of $\overline{S_{1}}$ associated with the operator function $\overline{S_{1}}$ on domain $\rho(E),$ defined by $$\overline{S_{1}}(\mu) = \overline{S_{1}(\mu)}, ~ \text{ for all}~ \mu \in \rho(E).$$

\begin{theorem}\label{thm 4.9}
Assume that $$D(E) \subset D(B), ~~ \rho(E) \neq 0,$$ and that for some $\mu \in \rho(E),$ the real linear operator $(E-\mu)^{-1}F$ is bounded on $D(F),$ while the real linear operator $B(E- \mu)^{-1}$ is closed in $K$ with the Schur complement $S_{1}(\mu) = A- \mu- B(E- \mu)^{-1}F$ is closable. Then the following statements hold:
\begin{enumerate}
\item $\sigma(\overline{\mathcal{A}}) \setminus \sigma(E) = \sigma(\overline{S_{1}}).$
\item ${\sigma_{p}}(\overline{\mathcal{A}}) \setminus \sigma(E) = {\sigma_{p}}(\overline{S_{1}}).$
\item ${\sigma_{c}}(\overline{\mathcal{A}}) \setminus \sigma(E) = {\sigma_{c}}(\overline{S_{1}}).$
\item ${\sigma_{r}}(\overline{\mathcal{A}}) \setminus \sigma(E) = {\sigma_{r}}(\overline{S_{1}}).$
\end{enumerate}
\end{theorem}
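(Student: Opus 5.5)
The plan is to mirror the proof of Theorem~\ref{thm 4.8}, using the factorization of Theorem~\ref{thm 4.4} in place of that of Theorem~\ref{thm 4.1}. By Theorem~\ref{thm 4.4} (and the analogue of Remark~\ref{remark 4.2} for $E$), for every $\alpha\in\rho(E)$ the operator $(E-\alpha)^{-1}F$ is bounded on $D(F)$ and $B(E-\alpha)^{-1}$ is bounded on $K$. We therefore have
\[
\overline{\mathcal A}-\alpha=
\begin{bmatrix} I & B(E-\alpha)^{-1}\\ 0 & I\end{bmatrix}
\begin{bmatrix} \overline{S_{1}(\alpha)} & 0\\ 0 & E-\alpha\end{bmatrix}
\begin{bmatrix} I & 0\\ \overline{(E-\alpha)^{-1}F} & I\end{bmatrix}.
\]
The two outer triangular factors are bounded real linear bijections, with inverses obtained by negating the off-diagonal entries. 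Consequently injectivity, closedness of the range, and surjectivity of $\overline{\mathcal A}-\alpha$ coincide with the corresponding properties of the diagonal middle factor. Because $E-\alpha$ is bijective, these in turn reduce to the same properties of $\overline{S_{1}(\alpha)}$.

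For (1), take $\alpha\in\sigma(\overline{\mathcal A})\setminus\sigma(E)$ and suppose $0\in\rho(\overline{S_1(\alpha)})$. Then
\[
(\overline{\mathcal A}-\alpha)^{-1}=\begin{bmatrix} I & 0\\ -\overline{(E-\alpha)^{-1}F} & I\end{bmatrix}\begin{bmatrix} \overline{S_1(\alpha)}^{-1} & 0\\ 0 & (E-\alpha)^{-1}\end{bmatrix}\begin{bmatrix} I & -B(E-\alpha)^{-1}\\ 0 & I\end{bmatrix}
\]
exists, which is a contradiction. Conversely, if $\beta\in\rho(E)\cap\rho(\overline{\mathcal A})$, conjugating $\overline{\mathcal A}-\beta$ by the inverse triangular factors shows that $\operatorname{diag}(\overline{S_1(\beta)},E-\beta)$ is bijective, so $0\in\rho(\overline{S_1(\beta)})$. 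Part (2) follows because the kernels correspond: $(u,v)\in N(\overline{\mathcal A}-\alpha)$ forces $v=-\overline{(E-\alpha)^{-1}F}u$ and $u\in N(\overline{S_1(\alpha)})$, and conversely.

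For (3) and (4), we use the identity
\[
R\Bigl(\begin{bmatrix} I & -B(E-\alpha)^{-1}\\ 0 & I\end{bmatrix}(\overline{\mathcal A}-\alpha)\Bigr)=R(\overline{S_1(\alpha)})\times K,
\]
which holds since the right triangular factor is a bijection of the domain. From this identity, $R(\overline{\mathcal A}-\alpha)$ is closed if and only if $R(\overline{S_1(\alpha)})$ is closed, and it equals $H\times K$ if and only if $R(\overline{S_1(\alpha)})=H$. Combining these equivalences with the injectivity correspondence from (2) yields the equalities for $\sigma_c$ and $\sigma_r$ directly, exactly as in parts (3) and (4) of Theorem~\ref{thm 4.8}.

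The main obstacle is justifying the factorization for every $\alpha\in\rho(E)$, not only for the fixed $\mu$. This requires closability of $S_1(\alpha)$ together with boundedness of $B(E-\alpha)^{-1}$ and $(E-\alpha)^{-1}F$. I would handle it via the resolvent identity, as in Remark~\ref{remark 4.2}: write
\[
B(E-\alpha)^{-1}=B(E-\mu)^{-1}\bigl[I+(\alpha-\mu)(E-\alpha)^{-1}\bigr],
\]
where $B(E-\mu)^{-1}$ is closed and everywhere defined, hence bounded by the closed graph theorem for real Banach spaces. Closability of $S_1(\alpha)$ then follows from Theorem~\ref{thm 4.4}, since closability of $\mathcal A$ does not depend on $\mu$.
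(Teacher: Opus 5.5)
Your proposal is correct and takes the same route as the paper: its proof of this theorem just says to rerun the argument of Theorem~\ref{thm 4.8} with the factorization of Theorem~\ref{thm 4.4}, which is what you do. Your identity $B(E-\alpha)^{-1}=B(E-\mu)^{-1}\bigl[I+(\alpha-\mu)(E-\alpha)^{-1}\bigr]$ is a cleaner way than the Neumann series of Remark~\ref{remark 4.2} to pass from the fixed $\mu$ to every $\alpha\in\rho(E)$, since it never commutes a complex scalar past the merely real-linear resolvent; you only asserted boundedness of $(E-\alpha)^{-1}F$ on $D(F)$, but it follows the same way from $(E-\alpha)^{-1}F=(E-\mu)^{-1}F+(E-\alpha)^{-1}\bigl[(\alpha-\mu)(E-\mu)^{-1}F\bigr]$.
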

\begin{proof}
The proof proceeds analogously to that of Theorem~\ref{thm 4.8}.
\end{proof}
Assume now that $$D(F) \subset D(A),~~ F \text{ is bijective },$$ 
$F^{-1}E$ is bounded on $D(E)$ and $(A-\lambda)F^{-1}$ is closed. The quadratic complement $$T_{2}(\lambda) =  B - (A-\lambda)F^{-1}(E-\lambda) $$  of $\mathcal{A}$ is closable for some $\lambda \in \mathbb {C}$. We define the following spectra with the operator function $\overline{T_{2}}$ on domain $\mathbb{C},$ defined by 
$$\overline{T_{2}}(\mu) = \overline{T_{2}(\mu)}, \text{ for all } \mu \in \mathbb{C}.$$
\begin{enumerate}
    \item The spectrum of \(\overline{T_{2}}\):
    \[
    \sigma(\overline{T_{2}})
    =
    \left\{
    \mu \in \mathbb{C}
    :\;
    0 \in \sigma\!\left(\overline{T_{2}(\mu)}\right)
    \right\}.
    \]

    \item The point spectrum of \(\overline{T_{2}}\):
    \[
    \sigma_{p}(\overline{T_{2}})
    =
    \left\{
    \mu \in \mathbb{C}
    :\;
    0 \in \sigma_{p}\!\left(\overline{T_{2}(\mu)}\right)
    \right\}.
    \]

    \item The continuous spectrum of \(\overline{T_{2}}\):
    \[
    \sigma_{c}(\overline{T_{2}})
    =
    \left\{
    \mu \in \mathbb{C}
    :\;
    0 \in \sigma_{c}\!\left(\overline{T_{2}(\mu)}\right)
    \right\}.
    \]

    \item The residual spectrum of \(\overline{T_{2}}\):
    \[
    \sigma_{r}(\overline{T_{2}})
    =
    \left\{
    \mu \in \mathbb{C}
    :\;
    0 \in \sigma_{r}\!\left(\overline{T_{2}(\mu)}\right)
    \right\}.
    \]
\end{enumerate}
\begin{theorem}\label{thm 4.10}
Assume that $$D(F) \subset D(A),~~ F \text{ is bijective },$$ 
$F^{-1}E$ is bounded on $D(E)$ and $(A-\mu)F^{-1}$ is closed. The quadratic complement $$T_{2}(\mu) =  B - (A-\mu)F^{-1}(E-\mu) $$ is closable for some $\mu \in \mathbb {C}$. Then the following statements hold:
\begin{enumerate}
    \item $\sigma(\overline{\mathcal{A}}) = \sigma(\overline{T_{2}}).$
    \item$\sigma_{p}(\overline{\mathcal{A}}) = \sigma_{p}(\overline{T_{2}}).$
    \item $\sigma_{c}(\overline{\mathcal{A}}) = \sigma_{c}(\overline{T_{2}}).$
    \item $\sigma_{r}(\overline{\mathcal{A}}) = \sigma_{r}(\overline{T_{2}}).$
\end{enumerate}
\end{theorem}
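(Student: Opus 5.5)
The plan is to follow the same strategy as in the proof of Theorem \ref{thm 4.8}, now using the quadratic-complement factorization of Theorem \ref{thm 4.6}. By Theorem \ref{thm 4.6}, for every $\mu\in\mathbb{C}$ we have
\[
\overline{\mathcal A}-\mu = \begin{bmatrix} I & (A-\mu)F^{-1}\\ 0 & I\end{bmatrix}
\begin{bmatrix} 0 & \overline{T_{2}(\mu)}\\ F & 0\end{bmatrix}
\begin{bmatrix} I & \overline{F^{-1}(E-\mu)}\\ 0 & I\end{bmatrix}.
\]
Call the outer factors $L(\mu)$ and $R(\mu)$. Both are real linear, upper triangular with identity diagonal, and invertible, with inverses obtained by negating the off-diagonal entry. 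Since $\sigma(\overline{\mathcal A})$ excludes no set here, the statement is a clean equality for every $\mu\in\mathbb{C}$.

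The first step is to record that $R(\mu)$ is a bounded bijection. Boundedness holds because $F^{-1}E$ is bounded on $D(E)$ and $F^{-1}$ is bounded: $F$ is a closed bijection, so the open mapping theorem for real spaces applies. For $L(\mu)$ I expect to check that it is a bijection of the relevant range space onto $H\times K$ with the obvious inverse; it need not be bounded, since $(A-\mu)F^{-1}$ is only closed. Because $L(\mu)$ is bijective, the middle operator $M(\mu)=\begin{bmatrix}0&\overline{T_2(\mu)}\\F&0\end{bmatrix}$ and $\overline{\mathcal A}-\mu$ then have the following in common: injectivity, surjectivity, and closedness of the range. The last of these requires care, and I discuss it below.

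For (1), note that $M(\mu)(u,v)=(\overline{T_2(\mu)}v,\,Fu)$. Since $F$ is bijective, $M(\mu)$ is bijective exactly when $\overline{T_2(\mu)}$ is bijective. Hence $\overline{\mathcal A}-\mu$ is bijective if and only if $0\in\rho(\overline{T_2(\mu)})$, and boundedness of the inverse is automatic by the open mapping theorem for closed real linear operators. For (2), $N(\overline{\mathcal A}-\mu)=R(\mu)^{-1}N(M(\mu))=R(\mu)^{-1}\bigl(\{0\}\times N(\overline{T_2(\mu)})\bigr)$, so $\overline{\mathcal A}-\mu$ fails to be injective exactly when $\overline{T_2(\mu)}$ fails to be injective. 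For (3) and (4), assume injectivity. Then $R(\overline{\mathcal A}-\mu)=L(\mu)\bigl(R(\overline{T_2(\mu)})\times K\bigr)$, using $R(F)=K$. The two conditions to compare are $R\neq H\times K$ and closedness of the range, and in each case I argue by contradiction exactly as in Theorem \ref{thm 4.8}(3),(4). For surjectivity, solve $(h,k)$ by first choosing $u=F^{-1}(\cdot)$ and then $v$ with $\overline{T_2(\mu)}v$ equal to the corrected first component.

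The main obstacle is transferring closedness of the range through $L(\mu)$, which need not be bounded. First I would simply write the range elementwise: $(x,y)\in R(\overline{\mathcal A}-\mu)$ if and only if $x-(A-\mu)F^{-1}y\in R(\overline{T_2(\mu)})$ with $y\in D((A-\mu)F^{-1})$. Using that $(A-\mu)F^{-1}$ is closed and that $D(F)\subset D(A)$ makes it everywhere defined on $K$, the closed graph theorem gives boundedness. With that, $L(\mu)$ is a bounded homeomorphism, and closedness of $R(\overline{T_2(\mu)})\times K$ is equivalent to closedness of $R(\overline{\mathcal A}-\mu)$. This yields (3) and (4).
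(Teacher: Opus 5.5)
Your proposal is correct and follows essentially the same route as the paper. Both arguments rest on the factorization of Theorem~\ref{thm 4.6} and transfer bijectivity, injectivity, and closedness and fullness of the range between $\overline{\mathcal A}-\mu$ and $\overline{T_{2}(\mu)}$, using the invertible outer factors together with $R(F)=K$. Your closed-graph argument that the everywhere-defined closed operator $(A-\mu)F^{-1}$ is bounded, so that $L(\mu)$ is a homeomorphism of $H\times K$, spells out a step the paper uses only implicitly. Closedness of $F$ is not among the hypotheses, but you do not need it, since only the bijectivity of $R(\mu)$ between the relevant domains enters your kernel and range computations.
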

\begin{proof}
$\it(1)$ Let $\alpha \in \sigma(\overline{\mathcal{A}})$ and $\alpha \notin \sigma(\overline{T_{2}}).$ Then $$(\overline{\mathcal{A}}- \alpha)^{-1} = \begin{bmatrix} I & -{\overline{F^{-1}(E - \alpha)}}\\0 & I\end{bmatrix}\begin{bmatrix} 0 & F^{-1}\\(\overline{T_{2}(\alpha)})^{-1}& 0\end{bmatrix}\begin{bmatrix}I & -(A-\alpha)F^{-1}\\0 & I\end{bmatrix}$$
is bounded which is a contradiction. Hence, $\sigma(\overline{\mathcal{A}}) \subset \sigma(\overline{T_{2}}).$\\
Conversely, assume $\beta \in \sigma(\overline{T_{2}}) \cap \rho(\overline{\mathcal{A}}).$ Then $$\begin{bmatrix} 0 & \overline{T_{2}(\beta)}\\ F & 0\end{bmatrix} = 
\begin{bmatrix} I & -(A-\beta)F^{-1}\\0 & I\end{bmatrix}(\overline{\mathcal{A}} - \beta) \begin{bmatrix} I & -\overline{F^{-1}(E-\beta)} \\0 & I \end{bmatrix}$$
is bijective. It follows that $\overline{T_{2}(\beta)}$ is bijective which is again a contradiction. Therefore,  $\sigma(\overline{\mathcal{A}}) \supset \sigma(\overline{T_{2}}).$\\
$\it(2)$  The assertion follows directly from the definitions of the point spectra of \(\overline{\mathcal{A}}\) and \(\overline{T_{2}}\).\\
$\it(3)$ The assertion holds because, for some $\alpha, \beta \in \mathbb{C}$, $\overline{\mathcal{A}}-\alpha$ is one-one if and only if $\overline{T_{2}(\alpha)}$ is so, and
\[
R(\overline{\mathcal{A}}-\beta)
=
\overline{R(\overline{\mathcal{A}}-\beta)}
\]
if and only if
\[
R(\overline{T_{2}(\beta)})
=
\overline{R(\overline{T_{2}(\beta)})}.
\]
$\it(4)$ Let $\alpha \in \sigma_{r}(\overline{\mathcal{A}})$ but $\alpha \notin \sigma_{r}(\overline{T_{2}}).$ Then $R(\overline{T_{2}(\alpha)})
=
\overline{R(\overline{T_{2}(\alpha)})} = H.$ Consider an element $\begin{pmatrix}h \\ k\end{pmatrix} \in H \times K.$ Then, we get $v \in H$ and $w \in K$ such that $Fv = k$ and $\overline{T_{2}(\alpha)}w = h -(A-\alpha)v.$ So,
$$\begin{bmatrix} I & (A- \alpha)F^{-1}\\ 0 & I\end{bmatrix} \begin{bmatrix}0 & \overline{T_{2}(\alpha)}\\F & 0\end{bmatrix} \begin{pmatrix}v \\ w\end{pmatrix} = \begin{pmatrix}h \\k\end{pmatrix}.$$ It follows that $$R(\overline{\mathcal{A}}- \alpha) = R\bigg(\begin{bmatrix} I & (A- \alpha)F^{-1}\\ 0 & I\end{bmatrix} \begin{bmatrix}0 & \overline{T_{2}(\alpha)}\\F & 0\end{bmatrix} \bigg) = H \times K.$$ This is a contradiction. Hence,  $\sigma_{r}(\overline{\mathcal{A}}) \subset \sigma_{r}(\overline{T_{2}}).$\\
Conversely, assume $\beta \in \sigma_{r}(\overline{T_{2}})$ but $\beta \notin \sigma_{r}(\overline{\mathcal{A}}).$ Then $R(\overline{\mathcal{A}}-\beta)
=
\overline{R(\overline{\mathcal{A}}-\beta)}
= H \times K.$ For arbitrary $h \in H,$ there exists $\begin{pmatrix} u_{1} \\ u_{2}\end{pmatrix}$ such that $$\begin{bmatrix} I & (A- \beta)F^{-1}\\ 0 & I\end{bmatrix} \begin{bmatrix}0 & \overline{T_{2}(\beta)}\\F & 0\end{bmatrix} \begin{pmatrix} u_{1} \\ u_{2}\end{pmatrix} = \begin{pmatrix} h \\ 0\end{pmatrix}.$$ This shows that $\overline{T_{2}(\beta)}u_{2} = h,$ which is a contradiction. Therefore,  $\sigma_{r}(\overline{\mathcal{A}}) \supset \sigma_{r}(\overline{T_{2}}).$
\end{proof}
\begin{theorem}\label{thm 4.11}
Assume that $$D(B) \subset D(E),~~ B \text{ is bijective },$$ 
$B^{-1}A$ is bounded on $D(A)$ and $(E-\mu)B^{-1}$ is closed. The quadratic complement $$T_{1}(\mu) =  F - (E-\mu)B^{-1}(A-\mu) $$ is closable for some $\mu \in \mathbb {C}$. Then the following statements hold:
\begin{enumerate}
    \item $\sigma(\overline{\mathcal{A}}) = \sigma(\overline{T_{1}}).$
    \item$\sigma_{p}(\overline{\mathcal{A}}) = \sigma_{p}(\overline{T_{1}}).$
    \item $\sigma_{c}(\overline{\mathcal{A}}) = \sigma_{c}(\overline{T_{1}}).$
    \item $\sigma_{r}(\overline{\mathcal{A}}) = \sigma_{r}(\overline{T_{1}}).$
\end{enumerate}
\end{theorem}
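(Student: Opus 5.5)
The plan is to mirror the proof of Theorem~\ref{thm 4.10}, with the roles of the rows and columns exchanged. The engine is the factorization of Theorem~\ref{thm 4.7}. For every $\mu\in\mathbb{C}$ it gives
\begin{equation*}
\overline{\mathcal A}-\mu=\begin{bmatrix} I & 0\\ (E-\mu)B^{-1} & I\end{bmatrix}\begin{bmatrix} 0 & B\\ \overline{T_{1}(\mu)} & 0\end{bmatrix}\begin{bmatrix} I & 0\\ \overline{B^{-1}(A-\mu)} & I\end{bmatrix}.
\end{equation*}
The outer factors are bounded, bijective, real linear operators. Their inverses are obtained simply by changing the sign of the off-diagonal entry.

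First I would justify that this factorization is valid for every $\mu$, not only for the $\mu$ at which $T_{1}(\mu)$ is assumed closable. The route is the "for some (and hence for all)" clause of Theorem~\ref{thm 4.7}. One observes that $B^{-1}(A-\mu)=B^{-1}A-\mu B^{-1}$ stays bounded on $D(A)$. This uses the fact that $B^{-1}$ is bounded, which follows from the open mapping theorem applied to the bijective closed operator $B$. Likewise $(E-\mu)B^{-1}$ stays closed, since it differs from $(E-\mu_0)B^{-1}$ by a bounded perturbation.

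With the factorization fixed, each $\mu$ is handled by comparing $\overline{\mathcal A}-\mu$ with the middle factor $M(\mu)$. Because the outer factors are bijections of the product spaces, $\overline{\mathcal A}-\mu$ is injective, surjective, or has closed range exactly when $M(\mu)$ does. The operator $M(\mu)$ sends $(u,v)$ to $(Bv,\overline{T_{1}(\mu)}u)$. Since $B$ is bijective, $M(\mu)$ is injective iff $\overline{T_{1}(\mu)}$ is injective, which gives (2). Its range is $R(\overline{T_{1}(\mu)})^{\text{swap}}\times H$, taken in the appropriate order, so the range of $M(\mu)$ is closed, respectively everything, iff $R(\overline{T_{1}(\mu)})$ is closed, respectively everything. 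Together with injectivity this gives (3) and (4), arguing as in parts (3) and (4) of Theorem~\ref{thm 4.10}.

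For (1), if $\overline{T_{1}(\alpha)}$ is invertible, I would write the inverse explicitly:
\begin{equation*}
(\overline{\mathcal A}-\alpha)^{-1}=\begin{bmatrix} I & 0\\ -\overline{B^{-1}(A-\alpha)} & I\end{bmatrix}\begin{bmatrix} 0 & (\overline{T_{1}(\alpha)})^{-1}\\ B^{-1} & 0\end{bmatrix}\begin{bmatrix} I & 0\\ -(E-\alpha)B^{-1} & I\end{bmatrix}.
\end{equation*}
Conversely, bijectivity of $\overline{\mathcal A}-\beta$ forces $M(\beta)$, and hence $\overline{T_{1}(\beta)}$, to be bijective.

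I expect the main obstacle to be checking that the outer factors really are bounded on all of $H\times K$. In particular, $(E-\mu)B^{-1}$ is only assumed closed. It is everywhere defined because $R(B^{-1})=D(B)\subset D(E)$, so the closed graph theorem for real Banach spaces makes it bounded. The same care is needed with $\overline{B^{-1}(A-\mu)}$, which is bounded on $\overline{D(A)}$ and so needs a suitable extension.
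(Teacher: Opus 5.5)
Your proposal is correct and follows the paper's route: the paper proves this theorem by transposing the argument of Theorem~\ref{thm 4.10}, and you do the same. That means using the factorization of Theorem~\ref{thm 4.7} at each $\mu$, writing down the explicit inverse for (1), and matching injectivity, closedness of range and surjectivity of $\overline{\mathcal A}-\mu$ with those of the middle factor $M(\mu)$, whose range is $H\times R(\overline{T_{1}(\mu)})$.

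Two small points should be fixed. First, closedness of $B$ is not an explicit hypothesis. It does follow from Theorem~\ref{thm 4.7} at the given $\mu$: the middle factor of a closed operator sandwiched between homeomorphisms is closed, and testing it on vectors $(0,v)$ shows that $B$ is closed. Second, by antilinearity $B^{-1}(A-\mu)=B^{-1}A-\overline{\mu}\,B^{-1}$ rather than $B^{-1}A-\mu B^{-1}$, although this does not affect boundedness.
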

\begin{proof}
The proof proceeds analogously to that of Theorem~\ref{thm 4.10}.
\end{proof}

\section*{Author Contributions} 
The author would like to thank Prof. Vladimir M\"uller for fruitful discussions and valuable suggestions during the preparation of this manuscript, as well as for his supervision.
\section*{Funding}
The author gratefully acknowledge support from GA~\v{C}R(Grant No.~25-15444K) and RVO:67985840.
\section*{Data Availability}
Data sharing is not applicable to this article as no data sets were generated or analyzed during the current study.

\section*{Declarations}
  The author confirms that there are no conflicts of interest regarding the publication of this manuscript.


\begin{thebibliography}{10}



\bibitem{majumdar2024hyers}
           Arup Majumdar, P Sam Johnson, and Ram N Mohapatra.\newblock{Hyers-Ulam Stability of Unbounded Closable Operators in Hilbert Spaces}. \newblock{\em Mathematische Nachrichten}, 297(10):3887-3903, 2024.

  

\bibitem{MR007}
   Arup Majumdar, P. Sam Johnson.
   \newblock{The Moore-Penrose inverses of unbounded closable operators and the direct sum of closed operators in Hilbert spaces}, \newblock{\em Linear and Multilinear Algebra}, 73(8):1668–1684, 2025.  

\bibitem{MR0396607}
  Arup Majumdar, P. Sam Johnson, Ram N. Mohapatra.
   \newblock{On the generalized Cauchy dual of closed operators in Hilbert
spaces}. \newblock{\em Acta Scientiarum Mathematicarum}, 2025. (To appear)

\bibitem{MRARUP}
Arup Majumdar, P. Sam Johnson, and Ram N Mohapatra.
\newblock{Characterizations of closed EP operators on Hilbert spaces}.
\newblock{\em Operators and Matrices}, 2026. (To appear)

\bibitem{MR2463978}
	Christiane Tretter.
	\newblock {\em Spectral theory of block operator matrices and applications}.
	\newblock Imperial College Press, London, 2008.

\bibitem{Vmuller}
 Damian Kołaczek, Vladimir Müller. 
 \newblock{Numerical ranges of antilinear operators,} \newblock {\em Integral Equations and Operator Theory} 96(2): 17, 2024.

\bibitem{EKo}
Eungil Ko, Ji Eun Lee, and Mee-Jung Lee. 
\newblock{On properties of C-normal operators,} \newblock{\em Banach Journal of Mathematical Analysis,} 15(4):65, 2021.


\bibitem{Fherbut}
F. Herbut, M. Vujičić. 
\newblock{Basic algebra of antilinear operators and some applications.I,} \newblock{Journal of Mathematical Physics,} 8(6): 1345-1354, 1967.

\bibitem{IanK}
Ian Knowles. 
\newblock{On $J$-selfadjoint extensions of $J$-symmetric operators,} \newblock{\em Proceedings of the American Mathematical Society,} 79(1):42-44, 1980.

\bibitem{IonG}
Ion Geru. 
\newblock{Time-Reversal Symmetry Seven Time-Reversal Operators for Spin Containing Systems,} \newblock{\em Springer Tracts in Modern Physics,} pp.1-350, 2018.

\bibitem{stochel1989normal}
  Jan Stochel, F. H. Szafraniec.
  \newblock{On normal extensions of unbounded operators. II},
  \newblock{\em Acta Sci. Math.(Szeged)}, 53(1-2):153--177, 1989.

\bibitem{stochel3}
Jan Stochel, F. H. Szafraniec. 
\newblock {On normal extensions of unbounded operators. III. Spectral properties,} \newblock{Publications of the Research Institute for Mathematical Sciences,} 25(1):105-139, 1989.

\bibitem{jawvan}
J. van Casteren, Seymour Goldberg.
\newblock{The conjugate of the product of operators,} \newblock{\em Studia Mathematica,} 125-130, 1970.
 
\bibitem{MR2953553}
	Konrad Schm\"{u}dgen.
	\newblock {\em Unbounded self-adjoint operators on {H}ilbert space}, volume 265
	of {\em Graduate Texts in Mathematics}.
	\newblock Springer, Dordrecht, 2012.

\bibitem{martin}
Martin Schechter.
\newblock{The conjugate of a product of operators,} \newblock{\em Journal of Functional Analysis,} 6:26-28, 1970.


\bibitem{ptak2019c}
  Marek Ptak,  Katarzyna  Simik, and Anna Wicher.
  \newblock{$ C $--normal operators}, \newblock{\em Electronic Journal of Linear Algebra,} 36:67-79, 2020.
        
 \bibitem{MR0203464}
	 R. G. Douglas.
	\newblock {On majorization, factorization, and range inclusion of operators on Hilbert space}.
	\newblock {\em Proc. Amer. Math. Soc.,} 17:413--415, 1966.    

\bibitem{bargmann}
Valentine Bargmann. 
\newblock{Note on Wigner's theorem on symmetry operations,} \newblock {Journal of Mathematical Physics,} 5(7):(862-868), 1964.

\bibitem{Arlinskii}
Y. Arlinskii,  K. Schmüdgen.
\newblock{On C-Symmetric and C-Self-adjoint Unbounded Operators on Hilbert Space,} \newblock{\em Integral Equations and Operator Theory}, 98(2):14, 2026.
\end{thebibliography}
\end{document}